	\def\captionfont{\setb@se{11pt}\protect\footnotesize}
    \def\captionfont{\protect\footnotesize}
    \newcommand{\iprd}[2]{\left( #1 , #2 \right)}
    \newcommand{\iprdmone}[2]{\left( #1 , #2 \right)_{-1,h}}
    \newcommand{\Tauh}{\mathcal{T}_h}
    \newcommand{\aiprd}[2]{a\left( #1 , #2 \right)}
    \newcommand{\bform}[3]{b\left( #1 , #2, #3 \right)}
    \newcommand{\cform}[2]{c\left( #1 , #2 \right)}
    \newcommand{\Soh}{\mathring{S}_h}
	\def\norm#1#2{\left\| #1 \right\|_{#2}}
	\newcommand{\avephio}{\overline{\phi}_0}
	\newcommand{\hatphi}{\hat{\phi}}
	\newcommand{\hatmu}{\hat{\mu}}
	\newcommand{\hatu}{\hat{\bf u}}
	\newcommand{\hatp}{\hat{p}}
	\newcommand{\hatxi}{\hat{\xi}}
	\newcommand{\dtau}{\delta_\tau}
	\newcommand{\phih}{\phi_h}
	\newcommand{\varphih}{\varphi_h}
	\newcommand{\bu}{{\bf u}}
	\newcommand{\bv}{{\bf v}}
	\newcommand{\buh}{{\bf u}_h}
	\newcommand{\muh}{\mu_h}
	\newcommand{\xih}{\xi_h}
	\newcommand{\eAphi}{\mathcal{E}_a^{\phi}}
	\newcommand{\ehphi}{\mathcal{E}_h^{\phi}}
	\newcommand{\ephi}{\mathcal{E}^{\phi}}
	\newcommand{\eAmu}{\mathcal{E}_a^{\mu}}
	\newcommand{\ehmu}{\mathcal{E}_h^{\mu}}
	\newcommand{\emu}{\mathcal{E}^{\mu}}
	\newcommand{\eAu}{\mathcal{E}_a^{\bf u}}
	\newcommand{\ehu}{\mathcal{E}_h^{\bf u}}
	\newcommand{\eu}{\mathcal{E}^{\bf u}}
	\newcommand{\ehp}{\mathcal{E}_h^p}
	\newcommand{\eAxi}{\mathcal{E}_a^{\xi}}
	\newcommand{\ehxi}{\mathcal{E}_h^{\xi}}
	\newcommand{\sigphi}{\sigma^\phi}
	\newcommand{\sigu}{\sigma^{\bf u}}
	\newtheorem{thm}{Theorem}[section]
	\newtheorem{lem}[thm]{Lemma}
	\newtheorem{rem}[thm]{Remark}
\begin{document}
\title{Analysis of a Mixed Finite Element Method for a Cahn-Hilliard-Darcy-Stokes System}

	\author{
Amanda E. Diegel\thanks{Department of Mathematics, The University of Tennessee, Knoxville, TN 37996 (diegel@math.utk.edu)},
	\and
Xiaobing H. Feng\thanks{Department of Mathematics, The University of Tennessee, Knoxville, TN 37996 (xfeng@math.utk.edu)},
	\and
Steven M. Wise\thanks{Department of Mathematics, The University of Tennessee, Knoxville, TN 37996 (swise@math.utk.edu)}}

	\maketitle
	
	\numberwithin{equation}{section}
	
	\begin{abstract}
In this paper we devise and analyze a mixed finite element method for a modified Cahn-Hilliard equation coupled with a non-steady Darcy-Stokes flow that models phase separation and coupled fluid flow in immiscible binary fluids and diblock copolymer melts.  The time discretization is based on a convex splitting of the energy of the equation. We prove that our scheme is unconditionally energy stable with respect to a spatially discrete analogue of the continuous free energy of the system and unconditionally uniquely solvable. We prove that the phase variable is bounded in $L^\infty \left(0,T;L^\infty\right)$ and the chemical potential is bounded in $L^\infty \left(0,T;L^2\right)$, for any time and space step sizes, in two and three dimensions, and for any finite final time $T$. We subsequently prove that these variables converge with optimal rates in the appropriate energy norms in both two and three dimensions.   
	\end{abstract}
	
{\bf Keywords:} Cahn-Hilliard equation, Darcy-Stokes equation, diblock copolymer, mixed methods, convex splitting, energy stability, convergence, nonlinear multigrid.

	\section{Introduction}

Let $\Omega\subset \mathbb{R}^d$, $d=2,3$, be an open polygonal or polyhedral domain. We consider the following modified Cahn-Hilliard-Darcy-Stokes problem with natural and no-flux/no-flow boundary conditions~\cite{choksi03,choksi11,ohta86,onuki97,zvelindovsky98a,zvelindovsky98b}: 
	\begin{subequations}
	\begin{eqnarray}
\partial_t \phi = \varepsilon\Delta \mu -\nabla\cdot\left({\bf u} \phi \right)  ,\,\,&&\text{in} \,\Omega_T ,
	\label{eq:CH-mixed-a-alt}
	\\
\mu = \varepsilon^{-1}\left(\phi^3-\phi\right)-\varepsilon \Delta \phi + \xi,\,\,&&\text{in} \,\Omega_T,   
	\label{eq:CH-mixed-b-alt}
	\\
-\Delta \xi =  \theta\left(\phi-\avephio\right),\,\,&&\text{in} \,\Omega_T,  
	\label{eq:CH-mixed-c-alt}
	\\
\omega\partial_t{\bf u} -\lambda\Delta{\bf u} +\eta{\bf u} +\nabla p = \gamma \mu \nabla\phi  ,\,\,&&\text{in} \,\Omega_T ,  
	\label{eq:CH-mixed-d-alt}
	\\
\nabla\cdot {\bf u} = 0   ,\,\,&&\text{in} \,\Omega_T ,  
	\label{eq:CH-mixed-e-alt}
	\\
\partial_n \phi =\partial_n \mu = \partial_n \xi = 0, 
\,{\bf u} = {\bf 0}, \,\,&&\text{on} \, \partial \Omega\times (0,T) ,
	\label{eq:CH-mixed-f-alt}
	\end{eqnarray}
	\end{subequations}
where $\avephio = \frac{1}{|\Omega|} \int_{\Omega} \phi_0({\bf x}) d{\bf x}$. We assume that the model parameters satisfy $\varepsilon, \gamma, \lambda >0$ and $\eta, \omega, \theta \ge 0$.  In the singular limit $\omega = 0$, we drop the initial conditions for ${\bf u}$.  We remark that it is possible to replace the right-hand-side of Eq.~\eqref{eq:CH-mixed-d-alt}, the excess forcing due to surface tension, by the term $-\gamma \phi\nabla\mu$.  The equivalence of the resulting PDE model with that above can be seen by redefining the pressure appropriately.  The latter form is what was used in the recent paper~\cite{collins13}.  The model~\eqref{eq:CH-mixed-a-alt} -- \eqref{eq:CH-mixed-f-alt} can be used to describe the flow of a very viscous block copolymer fluid~\cite{choksi03, choksi11, ohta86, onuki97, zvelindovsky98a, zvelindovsky98b}.  In Figure~\ref{fig1}, we show simulation snapshots using the equations to describe the phase separation of a block-copolymer in shear flow.  The parameters are given in the caption.

	\begin{figure}[h!]
	\begin{center}
\includegraphics[width=6in]{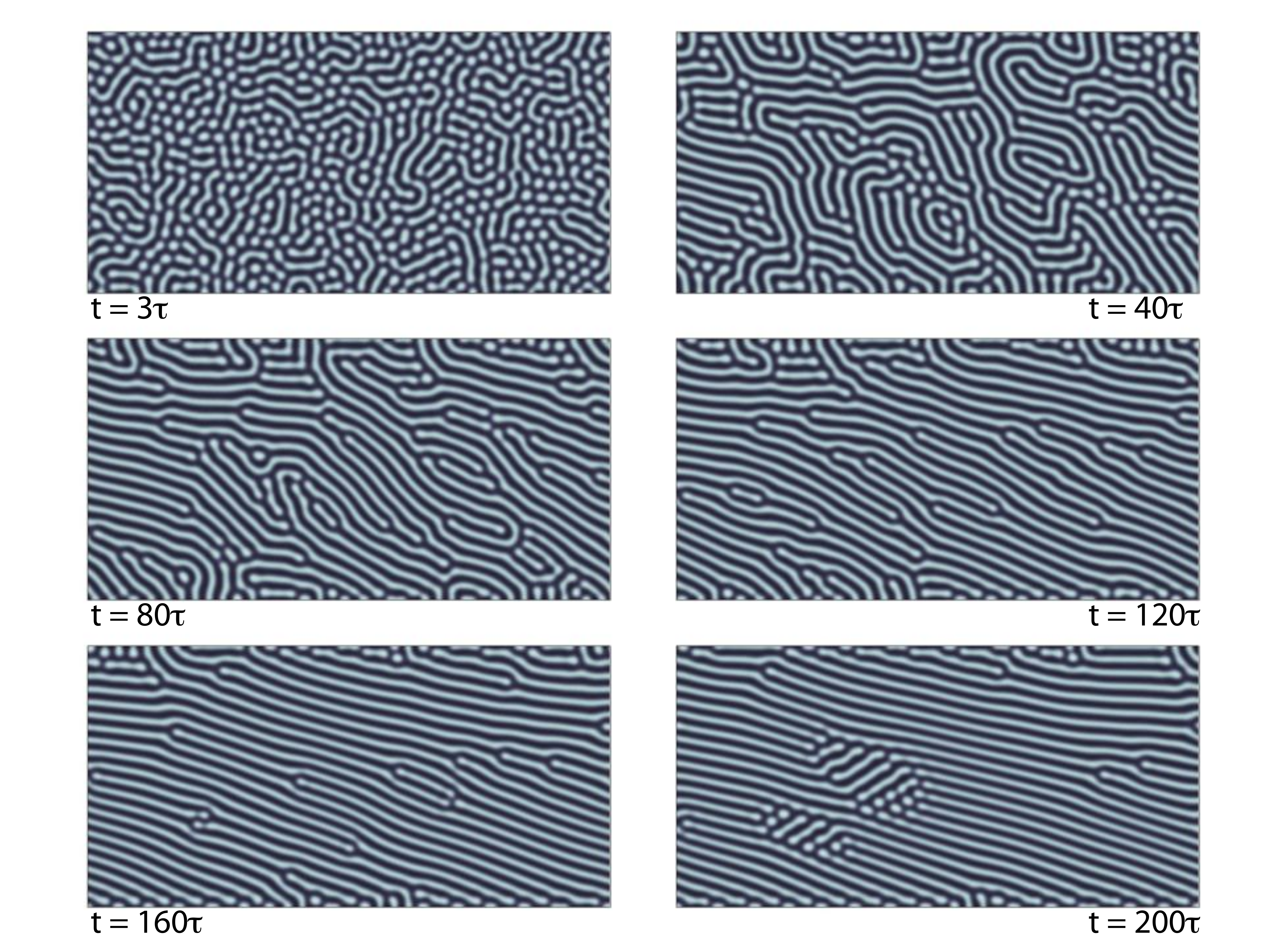}
\caption{Phase separation of a two-dimensional, very viscous block-copolymer fluid in shear flow.  The parameters are $\Omega = (0,8)\times (0,4)$, $\epsilon =0.02$, $\gamma = 0.4$, $\theta = 15000$, $\omega = 0$, $\eta = 0$, $\overline{\phi}_0 = -0.1$.  The shear velocity on the top and bottom is $\mp 2.0$, respectively.  Periodic boundary conditions are assumed in the $x$-direction.  The time unit referenced above is $\tau = 0.02$. The long-range $\theta$ term suppresses phase separation and coarsening, relative to the case $\theta=0$, and relatively long and thin phase domains emerge. This simulation compares well with other studies~\cite{zvelindovsky98a, zvelindovsky98b} that use a different dynamic density functional approach.  With a slightly larger value of $\theta$, the phase domains remain as dots and can form into hexagonal patterns, as in~\cite{zvelindovsky98b}.}
	\label{fig1}
	\end{center}
	\end{figure}

A weak formulation of \eqref{eq:CH-mixed-a-alt} -- \eqref{eq:CH-mixed-f-alt} may be written as follows: find $(\phi,\mu,\xi,{\bf u},p)$ such that
	\begin{eqnarray}
\phi &\in& L^\infty\left(0,T;H^1(\Omega)\right)\cap  L^4\left(0,T;L^\infty(\Omega)\right),
	\\
\partial_t \phi &\in&  L^2\bigl(0,T; H^{-1}(\Omega)\bigr), 
	\\
\mu &\in& L^2\bigl(0,T;H^1(\Omega)\bigr),
	\\
{\bf u} &\in& L^2\left(0,T;{\bf H}_0^1(\Omega)\right)\cap L^\infty\left(0,T;{\mathbf L}^2(\Omega)\right)
	\\
\partial_t{\bf u} &\in& L^2\left(0,T;{\bf H}^{-1}(\Omega) \right)
	\\
p &\in& L^2\left(0,T; L_0^2(\Omega)\right),
	\end{eqnarray}
and there hold for almost all $t\in (0,T)$ 
	\begin{subequations}
	\begin{align}
\langle \partial_t \phi ,\nu \rangle + \varepsilon \,\aiprd{\mu}{\nu} + \bform{\phi}{{\bf u}}{\nu} &= 0  &&\qquad \forall \,\nu \in H^1(\Omega),
	\label{weak-mch-a} 
	\\
\iprd{\mu}{\psi}-\varepsilon \,\aiprd{\phi}{\psi} - \varepsilon^{-1}\iprd{\phi^3-\phi}{\psi}  - \iprd{\xi}{\psi}&= 0  &&\qquad \forall \,\psi\in H^1(\Omega),
	\label{weak-mch-b} 
	\\
\aiprd{\xi}{\zeta}-\theta \,\iprd{\phi-\avephio}{\zeta}&= 0  &&\qquad \forall \,\zeta\in H^1(\Omega),
	\label{weak-mch-c} 
	\\
\omega \, \langle \partial_t{\bf u},{\bf v}\rangle+ \lambda\,\aiprd{{\bf u}}{{\bf v}} + \eta\iprd{ {\bf u}}{ {\bf v}} -\cform{{\bf v}}{p} - \gamma \,\bform{\phi}{{\bf v}}{\mu}&= 0  &&\qquad \forall \,{\bf v}\in {\bf H}_0^1(\Omega),
	\label{weak-mch-d} 
	\\
\cform{{\bf u}}{q} &= 0  &&\qquad \forall \,q\in L_0^2(\Omega),
	\label{weak-mch-e}
	\end{align}
	\end{subequations}
where
	\begin{equation}
\aiprd{u}{v} := \iprd{\nabla u}{\nabla v}, \quad \bform{\psi}{{\bf v}}{\nu} := \iprd{\nabla \psi \cdot {\bf v} }{\nu}, \quad \cform{{\bf v}}{q} := \iprd{\nabla \cdot{\bf v}}{q} , 
	\end{equation}
with the ``compatible" initial data
	\begin{align}
\phi(0) = \phi_0 \in H^2_N(\Omega) &:= \left\{v\in H^2(\Omega) \,\middle| \,\partial_n v = 0 \,\mbox{on} \,\partial\Omega \right\}, 
	\\
{\bf u}(0) = {\bf u}_0 \in {\bf V} &:= \left\{{\bf v}\in {\bf H}_0^1(\Omega) \,\middle| \,\cform{{\bf v}}{q} = 0 , \,\forall \,q\in L_0^2(\Omega) \right\}.
	\end{align}
Here we use the notations  $H^{-1}(\Omega) := \left(H^1(\Omega)\right)^*$, ${\bf H}_0^1(\Omega):= \left[H_0^1(\Omega)\right]^d$, ${\bf H}^{-1}(\Omega) := \left({\bf H}_0^1(\Omega)\right)^*$, and $\langle  \, \cdot \, , \, \cdot \, \rangle$ is the duality paring between $H^{-1}$ and $H^1$ in the first instance  and the duality paring between ${\bf H}^{-1}(\Omega)$ and $\left({\bf H}_0^1(\Omega)\right)^*$ in the second.  Throughout the paper, we use the notation $\chi(t) := \chi(\, \cdot \, , t)\in X$, which views a spatiotemporal function as a map from the time interval $[0,T]$ into an appropriate Banach space, $X$.  The system \eqref{weak-mch-a} -- \eqref{weak-mch-e} is mass conservative:  for almost every $t\in[0,T]$, $\iprd{\phi(t)-\phi_0}{1} = 0$.  This observation rests on the fact that $b(\phi,{\bf u},1) = 0$, for all $\phi\in L^2(\Omega)$ and all ${\bf u}\in{\bf V}$.  Observe that the homogeneous Neumann boundary conditions associated with the phase variables $\phi$, $\mu$, and $\xi$ are natural in this mixed weak formulation of the problem.  

The existence of weak solutions is a straightforward exercise using the compactness/energy method.  See, for example,~\cite{feng12}.  To define the energy for this system, we need a norm on a subspace of $H^{-1}(\Omega)$. With $L_0^2(\Omega)$ denoting those functions in $L^2(\Omega)$ with zero mean, we set
	\begin{equation}
\mathring{H}^1(\Omega) = H^1(\Omega)\cap L_0^2(\Omega) \quad \text{and} \quad \mathring{H}^{-1}(\Omega) :=\left\{v\in H^{-1}(\Omega) \,\middle| \,\langle v , 1 \rangle = 0  \right\}.
	\end{equation}
We define a linear operator $\mathsf{T} : \mathring{H}^{-1}(\Omega) \rightarrow \mathring{H}^1(\Omega)$ via the following variational problem: given $\zeta\in \mathring{H}^{-1}(\Omega)$, find $\mathsf{T}(\zeta)\in \mathring{H}^1(\Omega)$ such that
	\begin{equation}
\aiprd{\mathsf{T}(\zeta)}{\chi} = \langle \zeta, \chi\rangle \qquad \forall \,\chi\in \mathring{H}^1(\Omega).
	\end{equation}
$\mathsf{T}$ is well-defined, as is guaranteed by the Riesz Representation Theorem.  The following facts are easily established.

	\begin{lem}
	\label{lem-negative-norm}
Let $\zeta,\, \xi \in\mathring{H}^{-1}(\Omega)$ and, for such functions, set
	\begin{equation}
\left(\zeta,\xi\right)_{H^{-1}} :=\aiprd{ \mathsf{T}(\zeta)}{\mathsf{T}(\xi)} =\iprd{\zeta}{\mathsf{T}(\xi)}  = \iprd{\mathsf{T}(\zeta)}{\xi}.
	\label{crazy-inner-product}
	\end{equation}
$\left(\, \cdot\, ,\, \cdot\, \right)_{H^{-1}}$ defines an inner product on $\mathring{H}^{-1}(\Omega)$, and the induced norm is equal to the operator norm:
	\begin{equation}
\norm{\zeta}{H^{-1}} := \sqrt{\left(\zeta,\zeta\right)_{H^{-1}}} = \sup_{0\ne \chi\in\mathring{H}^1} \frac{\langle \zeta , \chi\rangle}{\norm{\nabla\chi}{L^2}} . 
	\label{crazy-norm-minus-one}
	\end{equation}
Consequently, for all $\chi\in H^1(\Omega)$ and all $\zeta\in\mathring{H}^{-1}(\Omega)$, 
	\begin{equation}
\left|\langle \zeta , \chi\rangle\right| \le \norm{\zeta}{H^{-1}} \norm{\nabla\chi}{L^2} .
	\end{equation}
Furthermore, for all $\zeta\in L_0^2(\Omega)$, we have the Poincar\'{e} type inequality
	\begin{equation}
\norm{\zeta}{H^{-1}} \le C  \norm{\zeta}{L^2},
	\end{equation}
where $C>0$ is the usual Poincar\'{e} constant.
	\end{lem}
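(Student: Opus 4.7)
The plan is to verify the four assertions in the stated order, using only the variational definition of $\mathsf{T}$, symmetry of $\aiprd{\cdot}{\cdot}$, Cauchy--Schwarz, and the standard Poincar\'{e} inequality on $\mathring{H}^1(\Omega)$.

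First I would establish the chain of identities in \eqref{crazy-inner-product}. Since $\mathsf{T}(\xi)\in \mathring{H}^1(\Omega)$ is an admissible test function in the defining equation for $\mathsf{T}(\zeta)$, inserting $\chi = \mathsf{T}(\xi)$ yields $\aiprd{\mathsf{T}(\zeta)}{\mathsf{T}(\xi)} = \langle \zeta, \mathsf{T}(\xi)\rangle$. Swapping the roles of $\zeta$ and $\xi$ and invoking the symmetry of $\aiprd{\cdot}{\cdot}$ gives the companion identity $\aiprd{\mathsf{T}(\zeta)}{\mathsf{T}(\xi)} = \langle \xi, \mathsf{T}(\zeta) \rangle$. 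Bilinearity and symmetry of $(\cdot,\cdot)_{H^{-1}}$ then follow from linearity of $\mathsf{T}$ and bilinearity/symmetry of $\aiprd{\cdot}{\cdot}$. Positivity is immediate: $(\zeta,\zeta)_{H^{-1}} = \norm{\nabla \mathsf{T}(\zeta)}{L^2}^2 \ge 0$.

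The only subtle piece is non-degeneracy, which is where I would need to be careful. If $(\zeta,\zeta)_{H^{-1}}=0$, then $\mathsf{T}(\zeta)$ is constant in $\Omega$, and since $\mathsf{T}(\zeta)\in \mathring{H}^1(\Omega)$ it must in fact vanish. The defining equation then gives $\langle \zeta, \chi\rangle = 0$ for every $\chi \in \mathring{H}^1(\Omega)$. To conclude $\zeta=0$ in $H^{-1}(\Omega)$, I would decompose an arbitrary $\chi\in H^1(\Omega)$ as $\chi = \chi_0 + \overline{\chi}$, where $\overline{\chi}$ is its mean and $\chi_0\in\mathring{H}^1(\Omega)$, and use the defining property of $\mathring{H}^{-1}(\Omega)$ that $\zeta$ annihilates constants; thus $\langle \zeta,\chi\rangle = 0$ for all $\chi\in H^1(\Omega)$.

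For \eqref{crazy-norm-minus-one}, Cauchy--Schwarz applied to the identity $\langle \zeta,\chi\rangle = \aiprd{\mathsf{T}(\zeta)}{\chi}$ gives $|\langle \zeta,\chi\rangle| \le \norm{\nabla \mathsf{T}(\zeta)}{L^2}\norm{\nabla\chi}{L^2}$ for $\chi\in \mathring{H}^1(\Omega)$, so the supremum is at most $\norm{\nabla\mathsf{T}(\zeta)}{L^2}=\norm{\zeta}{H^{-1}}$; plugging in the distinguished test function $\chi = \mathsf{T}(\zeta)$ shows the supremum is attained. The general duality bound for $\chi\in H^1(\Omega)$ follows by the same mean-zero decomposition as above, noting $\norm{\nabla\chi_0}{L^2}=\norm{\nabla\chi}{L^2}$.

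Finally, the Poincar\'{e}-type estimate is obtained by restricting to $\zeta\in L_0^2(\Omega)$, in which case the duality pairing with any $\chi\in \mathring{H}^1(\Omega)$ reduces to the $L^2$ inner product; Cauchy--Schwarz followed by the classical Poincar\'{e} inequality on $\mathring{H}^1(\Omega)$, combined with the supremum characterization of $\norm{\cdot}{H^{-1}}$, yields the bound. The main potential obstacle is really just the constants-versus-zero-mean bookkeeping in the non-degeneracy step and in the extension of the sharp duality bound from $\mathring{H}^1$ to $H^1$; everything else is a routine exercise once \eqref{crazy-inner-product} is in hand.
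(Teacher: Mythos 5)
The paper offers no proof of this lemma, stating only that ``the following facts are easily established,'' and your argument is exactly the standard one the authors had in mind: test the defining equation for $\mathsf{T}$ with $\mathsf{T}(\xi)$, use Cauchy--Schwarz plus the distinguished test function $\chi=\mathsf{T}(\zeta)$ for the norm identity, and handle the constants via the mean-zero decomposition. Your proof is correct, including the two places that genuinely need care (non-degeneracy and the extension of the duality bound from $\mathring{H}^1$ to $H^1$), so there is nothing to add.
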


Consider  the energy
	\begin{eqnarray}
E({\bf u},\phi) &=& \frac{\omega}{2\gamma} \norm{{\bf u}}{L^2}^2+ \frac{1}{4\varepsilon}\norm{\phi^2-1}{L^2}^2 +\frac{\varepsilon}{2} \norm{\nabla \phi}{L^2}^2 +\frac{\theta}{2}\norm{\phi-\avephio}{H^{-1}}^2
	\nonumber
	\\
&=& \frac{\omega}{2\gamma} \norm{{\bf u}}{L^2}^2+ \frac{1}{4\varepsilon}\norm{\phi}{L^4}^4 - \frac{1}{2\varepsilon}\norm{\phi}{L^2}^2 +  \frac{|\Omega|}{4\varepsilon} +\frac{\varepsilon}{2} \norm{\nabla \phi}{L^2}^2 +\frac{\theta}{2}\norm{\phi-\avephio}{H^{-1}}^2 ,
	\label{continuous-energy}
	\end{eqnarray}
which is defined for all ${\bf u} \in {\bf L}^2(\Omega)$ and $\phi\in\mathcal{A}:= \left\{\phi\in H^1(\Omega)\,\middle| \,\iprd{\phi-\avephio}{1}=0\right\}$. Clearly, if $\theta \ge 0$, then $E({\bf u},\phi) \ge 0$ for all ${\bf u} \in {\bf L}^2(\Omega)$ and $\phi\in \mathcal{A}$.  For arbitrary $\theta\in\mathbb{R}$, $\varepsilon >0$, ${\bf u} \in {\bf L}^2(\Omega)$, and $\phi\in\mathcal{A}$, there exist positive constants $M_1=M_1(\varepsilon,\theta)$ and $M_2=M_2(\varepsilon,\theta)$ such that
	\begin{equation} 
0 < M_1\left(\norm{{\bf u}}{L^2}^2+ \norm{\phi}{H^1}^2 \right) \le E({\bf u},\phi) +M_2.
	\end{equation}
It is straightforward to show that weak solutions of \eqref{weak-mch-a} -- \eqref{weak-mch-e} dissipate the energy \eqref{continuous-energy}. In other words, \eqref{eq:CH-mixed-a-alt} -- \eqref{eq:CH-mixed-f-alt} is a conserved gradient flow with respect to the energy \eqref{continuous-energy}. Precisely, for any $t\in[0,T]$, we have the energy law
	\begin{equation}
E({\bf u}(t),\phi(t)) +\int_0^t\left(\frac{\lambda}{\gamma}\norm{\nabla{\bf u}(s)}{L^2}^2 +  \frac{\eta}{\gamma}\norm{{\bf u}(s)}{L^2}^2+\varepsilon\norm{\nabla\mu(s)}{L^2}^2\right) ds = E({\bf u}_0,\phi_0) .
	\label{pde-energy-law}
	\end{equation}
Formally, one can also easily demonstrate that $\mu$ in \eqref{eq:CH-mixed-b-alt} is the variational derivative of $E$ with respect to $\phi$. In symbols, $\mu = \delta_\phi E$.  

The energy typically ``prefers" the fluid phase states $\phi \approx \pm 1$ (the pure phases) separated by a diffuse interface of (small) thickness $\epsilon$.  However, the long-range energy described by the last term can change this picture~\cite{aristotelous13a, choksi11, choksi03, ohta86}.  Specifically, when $\theta > 0$, the energy term $\frac{\theta}{2}\norm{\phi-\avephio}{H^{-1}}^2$ in \eqref{continuous-energy} is convex and stabilizing, and this energy tends to stabilize (or suppress) both the phase separation and the coarsening processes. This is observed in Fig.~\ref{fig1}. If $\theta < 0$ the term is concave and destabilizing.  In this case, the process of phase separation will be enhanced.  Herein we assume that $\theta \ge 0$.  
	
In the case that ${\bf u}\equiv 0$ -- which occurs if $\gamma=0$ -- the model \eqref{eq:CH-mixed-a-alt} -- \eqref{eq:CH-mixed-f-alt} reduces to the modified Cahn-Hillard equation~\cite{choksi03,choksi11} which was analyzed by  Aristotelous~\emph{et al.}~\cite{aristotelous13a}. Their scheme was comprised of a convex splitting method for time discretization and a discontinuous galerkin finite element method for space discretization.  They showed that their mixed, fully discrete scheme was unconditionally energy stable, unconditionally uniquely solvable, and optimally convergent in the energy norm in two-dimensions.  Collins~\emph{et al.}~\cite{collins13} used a convex-splitting method in time and a finite difference method in space to devise an energy stable method for a system similar to \eqref{eq:CH-mixed-a-alt} -- \eqref{eq:CH-mixed-f-alt}, though they did not prove convergence or error stimulates. 

In the present work, we wish to generalize both~\cite{aristotelous13a, collins13}, in a certain sense, and prove convergence and optimal error estimates for a convex-splitting scheme for \eqref{eq:CH-mixed-a-alt} -- \eqref{eq:CH-mixed-f-alt} using a standard Galerkin finite element discretization of space, as opposed to the discontinuous Galerkin finite element setting used in~\cite{aristotelous13a} or the finite difference setting used in~\cite{collins13}.   We remark that, as in~\cite{aristotelous13a}, the $\theta$ ``growth" term makes the analysis somewhat different from other works involving the Cahn-Hilliard equations. Specifically, analysis of this term requires the introduction of a discrete negative norm as in~\cite{aristotelous13a}. 

The goal of this paper is to construct a fully discrete, unconditionally energy stable, unconditionally uniquely solvable, and convergent mixed finite element scheme for the Cahn-Hilliard-Darcy-Stokes system \eqref{eq:CH-mixed-a-alt} -- \eqref{eq:CH-mixed-f-alt}.  Energy stability means that the discrete solutions dissipate the energy in time, in analogy with the PDE model.  The convex-splitting approach that we will employ was popularized by Eyre~\cite{eyre98} and has been used by many others \cite{collins13, elliott89b, feng12, wang11, wise09a}. In the convex-splitting framework, one treats the contribution from the convex part implicitly and the contribution from the concave part explicitly. This treatment promotes the scheme's energy stability and unique solvability, both properties being unconditional with respect to the time and space step sizes.

Galerkin numerical methods for the Cahn-Hilliard-Navier-Stokes and the Allen-Cahn-Navier-Stokes have been investigated in the recent papers~\cite{abels12, feng06, feng07b, feng12, kay08, kay07, grun13, grun14, shen10a, shen10b}.  The rigorous analyses of numerical schemes -- mostly for the matched-density \emph{CHNS} system -- can be found in~\cite{feng06, feng07b, feng12, kay08, kay07, grun13, shen10a, shen10b}.  Specifically, there have been convergence proofs for these schemes, but all of these analyses focus on two types of limited convergence results: (i) error estimates and convergence rates for the semi-discrete setting (time continuous)~\cite{feng07b, kay08} and/or (ii) abstract convergence results with no convergence rates~\cite{feng07b, feng12, grun13, kay08}.  \emph{Optimal error estimates for the fully discrete schemes are lacking in the literature.}

Kay~\emph{et al.} develop both a semi-discrete and a fully discrete mixed finite element method for the Cahn-Hilliard-Navier-Stokes system of equations. For the semi-discrete model, they were able to show unconditional stabilities resulting from the discrete energy law.  For the fully discrete model, they use a first order implicit-explicit Euler method to discretize time and were able to show conditional energy stability, with a restriction on the time step.  They were able to obtain optimal error (convergence) rates for the semi-discrete model, but only an abstract convergence for the fully discrete model. In \cite{grun13}, Gr\"{u}n proves the abstract convergence of a fully discrete finite element scheme for a diffuse interface model for two-phase flow of incompressible, viscous fluids with different mass densities.  No convergence rates were presented in his paper. Feng~\cite{feng06} presented a fully discrete mixed finite element method for the Cahn-Hilliard-Navier-Stokes system of equations. The time discretization used is a first order implicit Euler with the exception of a stabilization term which is treated explicitly.  Conditional stability for the basic energy law is developed along with abstract convergence of the finite element model to the PDE model. However, no additional stability estimates are presented beyond the estimates achieved from the energy law. Feng~\emph{et al.}~\cite{feng07b} develop both a semi-discrete and fully discrete finite element method model for the Non-steady-Stokes-Allen-Cahn system of equations. For both the semi-discrete and fully discrete models, conditional energy stability is developed. Optimal error estimates are obtained for the semi-discrete scheme (time-continuous) while abstract convergence is proven for the fully discrete model.  Shen and Yang~\cite{shen10b} derived a phase field model for two-phase incompressible flows with variable density without assuming a small density ratio and constructed several efficient time discretization schemes.  For each of the schemes, a discrete energy law similar to the continuous energy law is established.  Finally, they present several numerical results to illustrate the effectiveness and correctness of their numerical schemes. However, no convergence or error analyses are presented in the paper.  

The work presented in this paper on the modified Cahn-Hilliard-Darcy-Stokes system is unique in the following sense. We are able to prove unconditional unique solvability, unconditional energy stability, and optimal error estimates for a fully discrete finite element scheme in three dimensions. Specifically, the stability and solvability statements we prove are \emph{completely unconditional with respect to the time and space step sizes}. In fact, all of our \emph{a priori} stability estimates hold completely independently of the time and space step sizes.  We use a bootstrapping technique to leverage the energy stabilities  to achieve unconditional $L^{\infty}(0,T; L^{\infty}(\Omega))$ stability for the phase field variable $\phi_h$ and unconditional $L^{\infty}(0,T; L^2(\Omega))$ stability for the chemical potential $\mu_h$.  With these stabilities in hand we are able to prove optimal error estimates for $\phi_h$ and $\mu_h$ in the appropriate energy norms.

The paper is organized as follows. In Section~\ref{sec:defn-and-properties}, we define our mixed finite element convex-splitting scheme and prove the unconditional solvability and stability of the scheme. In Section~\ref{sec-error-estimates}, we prove error estimates for the scheme under suitable regularity assumptions for the PDE solution. In Section~\ref{sec:numerincal-experiments}, we present the results of numerical tests that confirm the rates of convergence predicted by the error estimates.

	\section{A Mixed Finite Element Convex Splitting Scheme}
	\label{sec:defn-and-properties}
	
	\subsection{Definition of the Scheme}
	\label{subsec-defn}

Let $M$ be a positive integer and $0=t_0 < t_1 < \cdots < t_M = T$ be a uniform partition of $[0,T]$, with $\tau = t_i-t_{i-1}$, $i=1,\ldots ,M$.  Suppose ${\mathcal T}_h = \left\{ K \right\}$ is a conforming, shape-regular, quasi-uniform family of triangulations of $\Omega$.  For $r\in\mathbb{Z}^+$, define $\mathcal{M}_r^h := \left\{v\in C^0(\Omega) \, \middle| \,v|_K \in {\mathcal P}_r(K), \,\forall \,\,  K\in \mathcal{T}_h \right\}\subset H^1(\Omega)$ and $\mathcal{M}_{r,0}^h :=\mathcal{M}_r^h\cap H_0^1(\Omega)$.

For a given positive integer $q$, we set $S_h := \mathcal{M}_q^h$;  $\Soh := S_h\cap L_0^2(\Omega)$; 
	\begin{displaymath}
{\bf X}_h := \left\{ {\bf v}\in \left[C^0(\Omega)\right]^d \,\middle| \,v_i \in \mathcal{M}_{q+1,0}^h, \,i = 1, \ldots , d \right\}; \
{\bf V}_h := \left\{{\bf v} \in {\bf X}_h \,\middle| \,\left(\nabla\cdot {\bf v}, w \right) = 0, \forall \,\,  w\in \Soh  \right\}.
	\end{displaymath}
Note that ${\bf V}_h \not\subset {\bf V}$, in general. Our mixed convex-splitting scheme is defined as follows:  for any $1\le m\le M$, given  $\phih^{m-1} \in S_h$, $\buh^{m-1}\in{\bf X}_h$, find $\phih^m,\muh^m\in S_h$,  $\xi_h^m, p_h^m \in \Soh$, and $\buh^m\in{\bf X}_h$,  such that  
	\begin{subequations}
	\begin{align}
\iprd{\dtau \phih^m}{\nu} + \varepsilon \,\aiprd{\muh^m}{\nu} + \bform{\phih^{m-1}}{\buh^m}{\nu}= \,& 0 , \, & \forall \, \nu \in S_h ,
	\label{scheme-a}
	\\
\varepsilon^{-1} \,\iprd{\left(\phih^m\right)^3 -\phih^{m-1}}{\psi} + \varepsilon \,\aiprd{\phih^m}{\psi} -\iprd{\muh^m}{\psi} +\iprd{\xi_h^m}{\psi} = \,& 0 , \,&\forall \, \psi\in S_h ,
	\label{scheme-b}
	\\
\aiprd{\xi_h^m}{\zeta}-\theta \,\iprd{\phih^m-\avephio}{\zeta}  = \,& 0 , \,&\forall \, \zeta\in S_h ,
	\label{scheme-c} 
	\\
\iprd{\delta_\tau{\bf u}^m_h}{{\bf v}}+ \lambda \,\aiprd{{\bf u}^m_h}{{\bf v}} + \eta \,\iprd{ \buh^m}{ {\bf v}} -\cform{{\bf v}}{p^m_h} - \gamma \,\bform{\phih^{m-1}}{{\bf v}}{\muh^m} = \,& 0 ,\, &  \forall \, {\bf v}\in {\bf X}_h,
	\label{scheme-d} 
	\\
\cform{\buh^m}{q} = \,& 0 ,\, & \forall \, q\in \Soh,
	\label{scheme-e} 
	\end{align}
	\end{subequations}
where 
	\begin{equation}
\dtau \phih^m := \frac{\phih^m-\phih^{m-1}}{\tau}, \quad  \phih^0 := R_h \phi_0 , \quad \buh^0 := {\bf P}_h\bu_0.
	\end{equation}
The operator $R_h: H^1(\Omega) \to S_h$ is the Ritz projection:
	\begin{equation}
\aiprd{R_h\phi - \phi}{\chi} = 0, \quad \forall \, \chi\in S_h, \quad \iprd{R_h \phi-\phi}{1}=0.
	\end{equation}
The operator $\left({\bf P}_h,P_h\right): {\bf V}\times L_0^2 \to {\bf V}_h\times \Soh$ is the Darcy-Stokes projection:
	\begin{align}
\lambda \,\aiprd{{\bf P}_h \bu- \bu}{{\bf v}} + \eta \,\iprd{{\bf P}_h \bu- \bu} {{\bf v}} - \cform{{\bf v}}{P_h p -p} &= 0, \quad \forall \, \bv\in {\bf X}_h, 
	\\
\cform{{\bf P}_h \bu- \bu}{q} &= 0, \quad \forall \, q\in \Soh.
	\end{align}

	\begin{rem}
To shorten the presentation, we have set $\omega = 1$ (appearing in \eqref{eq:CH-mixed-d-alt}). We remark that the more general case $\omega >0$ can be considered in the analysis without any significant changes.  Additionally, with some slight modifications, the singular limit case, $\omega = 0$, can be covered in the analysis that follows. In this case, one looses the stability ${\bf u}_h\in L^\infty(0,T;L^2(\Omega))$.  For perspective, the analysis of Feng~\emph{et al.}~\cite{feng07b} requires ${\bf u}_h\in L^\infty(0,T;L^2(\Omega))$. 
	\end{rem}

	\begin{rem}
Note that $\iprd{\phih^0-\avephio}{1} =0$, where $\avephio$ is the initial mass average, which in the typical case, satisfies $|\avephio| \le 1$. We also point out that, appealing to  \eqref{scheme-a} and \eqref{scheme-e}, we have $\iprd{\phih^m-\avephio}{1} =0$, for all $m = 1, \ldots , M$, which follows because $\aiprd{\mu}{1} = 0$, for all $\mu\in S_h$, and $\bform{\phi_h}{{\bf u}}{1} = 0$, for all $\phi\in S_h$ and all ${\bf u}\in {\bf V}_h$.
	\end{rem}
	
	\begin{rem}
	\label{rem:initial-projection}
The elliptic projections are used in the initialization for simplicity in the forthcoming analysis. We can use other (simpler) projections in the initialization step, as long as they have good approximation properties.
	\end{rem}

	\begin{rem}
Note that it is not necessary for solvability and some basic energy stabilities that the $\mu$--space and the $\phi$--space be equal.  However, the proofs of the higher-order stability estimates, in particular those in Lemma~\ref{lem-a-priori-stability}, do require the equivalence of these spaces.  Mass conservation of the scheme requires some compatibility of the $p$--space with that of the $\phi$--space, to obtain $\bform{\phi_h}{{\bf u}}{1} = 0$. For the flow problem we have chosen the inf-sup-stable Taylor-Hood element.  One can also use the simpler MINI element.  Recall that the stability of the Taylor-Hood element typically requires that the family of meshes ${\mathcal T}_h$ has the property that no tetrahedron/triangle in the mesh has more than one face/edge on the boundary~\cite{brenner08}.
	\end{rem}

Now, we can define a scheme that is equivalent to that above.  For any $1\le m\le M$, given  $\varphih^{m-1} \in S_h$, $\buh^{m-1}\in{\bf X}_h$, find $\varphih^m,\muh^m\in S_h$,  $\xi_h^m \in \Soh$, $\buh^{m,0},\buh^{m,1}\in{\bf X}_h$, $p_h^{m,0},p_h^{m,1}\in \Soh$,  such that 
	\begin{subequations}
	\begin{align}
\lambda \,\aiprd{{\bf u}^{m,0}_h}{{\bf v}} +\left(\eta+\frac{1}{\tau}\right) \iprd{ \buh^{m,0}}{ {\bf v}} -\cform{{\bf v}}{p^{m,0}_h} -\frac{1}{\tau} \,\iprd{{\bf u}^{m-1}_h}{{\bf v}}  = \, & 0 ,\, &  \forall \, {\bf v}\in {\bf X}_h,
	\label{scheme-d-mean-zero-u0} 
	\\
\cform{\buh^{m,0}}{q} = \,& 0 ,\, & \forall \, q\in \Soh,
	\label{scheme-e-mean-zero-u0} 
	\end{align}
	\end{subequations}
and
	\begin{subequations}
	\begin{align}
\iprd{\frac{\varphih^m - \varphi_{h,\star}^{m-1}}{\tau}}{\nu} + \varepsilon \,\aiprd{\muh^m}{\nu} + \bform{\varphih^{m-1}}{{\bf u}_h^{m,1}}{\nu}  = \,& 0 ,   & \forall \, \nu \in S_h ,
	\label{scheme-a-mean-zero}
	\\
\varepsilon^{-1} \,\iprd{\left(\varphih^m+\avephio \right)^3 -\varphih^{m-1}-\avephio}{\psi} + \varepsilon \,\aiprd{\varphih^m}{\psi} -\iprd{\muh^m}{\psi} +\iprd{\xi_h^m}{\psi} = \,& 0 ,  &\forall \, \psi\in S_h ,
	\label{scheme-b-mean-zero}
	\\
\aiprd{\xi_h^m}{\zeta} - \theta \,\iprd{\varphih^m}{\zeta}  = \,& 0 ,  &\forall \, \zeta\in S_h ,
	\label{scheme-c-mean-zero} 
	\\
\lambda \,\aiprd{{\bf u}^{m,1}_h}{{\bf v}} + \left( \eta +\frac{1}{\tau}\right) \iprd{{\bf u}^{m,1}_h}{ {\bf v}} - \cform{{\bf v}}{p^{m,1}_h} - \gamma \,\bform{\varphih^{m-1}}{{\bf v}}{\muh^m} = \,& 0 ,  &  \forall \, {\bf v}\in {\bf X}_h,
	\label{scheme-d-mean-zero} 
	\\
\cform{\buh^{m,1}}{q} = \,& 0 ,  & \forall \, q \in \Soh,
	\label{scheme-e-mean-zero} 
	\end{align}
	\end{subequations}
where
	\begin{equation}
\varphi_{h,\star}^{m-1} := \varphih^{m-1} - \tau\mathcal{Q}_h\left( \nabla \varphih^{m-1} \cdot {\bf u}_h^{m,0}\right) \in S_h,
	\label{phi-m-1-star}
 	\end{equation}
and $\mathcal{Q}_h: L^2(\Omega) \to S_h$ is the $L^2$ projection, \emph{i.e.}, $\iprd{\mathcal{Q}_h \nu-\nu}{\chi}=0$, for all $\chi\in S_h$. For the initial data, we set
	\begin{equation}
\varphih^0 := R_h \phi_0 - \avephio  , \quad \buh^0 := {\bf P}_h\bu_0.
	\end{equation}

Hence, $\iprd{\varphih^0}{1}=0$. By setting $\nu\equiv 1$ in \eqref{scheme-a} and \eqref{scheme-a-mean-zero} and observing that $\aiprd{\varphi}{1} = 0$ for all $\varphi\in S_h$, one finds that, provided solutions for the two schemes exist, they are related via 
	\begin{equation}
\varphih^m +\avephio = \phih^m, \quad \varphih^m\in\Soh,  \quad {\bf u}_h^m = {\bf u}_h^{m,0}+ {\bf u}_h^{m,1} \in {\bf X}_h, \quad  p^m_h = p^{m,0}_h + p^{m,1}_h \in \Soh
	\end{equation}
for all $1\le m\le M$.  The variables $\muh^m$ and $\xi_h^m$ are the same as before.  Note that the average mass of $\muh^m$ will change with the time step $m$, \emph{i.e.}, $\iprd{\muh^m}{1} \ne \iprd{\muh^{m-1}}{1}$, in general.

	\begin{rem}
The utility of this new, equivalent formulation is that we can straightforwardly show its unconditional unique solvability by convex optimization methods.  Our arguments require that the velocity ${\bf u}_h^{m,1}$ is a linear function of $\mu_h^m$, as is the case in \eqref{scheme-d-mean-zero}.  (See Lemma~\ref{lem-bilinear-ell}.)  This was not the case in \eqref{scheme-d}, where ${\bf u}_h^m$ is an affine function of $\mu_h^m$.
	\end{rem}
	
	\subsection{Unconditional Solvability}
	\label{subsec-solvability}
In this subsection, we show that our schemes are unconditionally uniquely solvable.  We begin by defining some machinery for the solvability, as well as the stability and convergence analyses discussed later.  First, consider the invertible linear operator $\mathsf{T}_h : \Soh \rightarrow \Soh$ defined via the following variational problem: given $\zeta\in \Soh$, find $\mathsf{T}_h(\zeta)\in \Soh$ such that
	\begin{equation}
\aiprd{\mathsf{T}_h(\zeta)}{\chi} = \iprd{\zeta}{\chi} \qquad \forall \, \chi\in \Soh.
	\end{equation}
This clearly has a unique solution because $\aiprd{\, \cdot \, }{ \, \cdot \, }$ is an inner product on $\Soh$. We now wish to define a mesh-dependent ``$-1$" norm, \emph{i.e.}, a discrete analogue to the $H^{-1}$ norm.  We omit the details of the next result, the discrete analog of Lemma~\ref{lem-negative-norm}, for brevity.

	\begin{lem}
	\label{lem-negative-norm-discrete}
Let $\zeta,\, \xi \in \Soh$ and set
	\begin{equation}
\left(\zeta,\xi\right)_{-1,h} :=\aiprd{\mathsf{T}_h(\zeta)}{\mathsf{T}_h(\xi)} =\iprd{\zeta}{\mathsf{T}_h(\xi)}  = \iprd{\mathsf{T}_h(\zeta)}{\xi}.
	\label{crazy-inner-product-h}
	\end{equation}
$\left(\, \cdot\, ,\, \cdot\, \right)_{-1,h}$ defines an inner product on $\Soh$, and the induced negative norm satisfies
	\begin{equation}
\norm{\zeta}{-1,h} := \sqrt{\left(\zeta,\zeta\right)_{-1,h}} = \sup_{0\ne \chi\in\Soh} \frac{\iprd{\zeta}{\chi}}{\norm{\nabla\chi}{L^2}} . 
	\label{crazy-norm-h}
	\end{equation}
Consequently, for all $\chi\in S_h$ and all $\zeta\in\Soh$, 
	\begin{equation}
\left|\iprd{\zeta}{\chi}\right| \le \norm{\zeta}{-1,h} \norm{\nabla\chi}{L^2} .
	\label{plus-1-minus-1-estimate}
	\end{equation}
The following Poincar\'{e}-type estimate holds:
	\begin{equation}
\norm{\zeta}{-1,h} \le C  \norm{\zeta}{L^2}, \quad  \forall \, \,\zeta\in\Soh, 
	\end{equation}
for some $C >0$ that is independent of $h$.  Finally, if $\Tauh$ is globally quasi-uniform, then the following inverse estimate holds:
	\begin{equation}
\norm{\zeta}{L^2} \le C h^{-1} \norm{\zeta}{-1,h}, \quad  \forall \, \,\zeta\in\Soh,
	\end{equation}
for some $C >0$ that is independent of $h$.
	\end{lem}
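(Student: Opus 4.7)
The plan is to mimic the proof of Lemma~\ref{lem-negative-norm} at the discrete level, exploiting the fact that $\aiprd{\cdot}{\cdot}$ is a genuine inner product on $\Soh$ (thanks to the Poincar\'{e} inequality on zero-mean functions). First I would verify that $\mathsf{T}_h$ is well defined: given $\zeta\in\Soh$, the linear functional $\chi\mapsto \iprd{\zeta}{\chi}$ is bounded on $\left(\Soh, \aiprd{\cdot}{\cdot}\right)$, so the discrete Riesz representation theorem yields a unique $\mathsf{T}_h(\zeta)\in\Soh$. The equivalence of the three expressions in \eqref{crazy-inner-product-h} then follows by substituting $\chi = \mathsf{T}_h(\xi)$ (or $\mathsf{T}_h(\zeta)$) into the defining equation. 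Symmetry and bilinearity are immediate; positive definiteness follows by noting that $\iprd{\zeta}{\mathsf{T}_h(\zeta)}=0$ implies $\mathsf{T}_h(\zeta)=0$, hence $\iprd{\zeta}{\chi}=0$ for all $\chi\in\Soh$, and taking $\chi=\zeta\in\Soh$ yields $\zeta\equiv 0$.

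Next I would establish the sup characterization \eqref{crazy-norm-h}. For $\chi\in\Soh\setminus\{0\}$, the defining equation for $\mathsf{T}_h$ combined with Cauchy-Schwarz in the inner product $\aiprd{\cdot}{\cdot}$ gives
\begin{equation*}
\frac{\iprd{\zeta}{\chi}}{\norm{\nabla\chi}{L^2}} = \frac{\aiprd{\mathsf{T}_h(\zeta)}{\chi}}{\norm{\nabla\chi}{L^2}} \le \norm{\nabla \mathsf{T}_h(\zeta)}{L^2} = \norm{\zeta}{-1,h},
\end{equation*}
with equality when $\chi = \mathsf{T}_h(\zeta)$. The estimate \eqref{plus-1-minus-1-estimate} for arbitrary $\chi\in S_h$ then follows by splitting $\chi = \chi_0 + \overline{\chi}$ with $\chi_0\in\Soh$ and $\overline{\chi}\in\mathbb{R}$: since $\zeta\in\Soh$ has zero mean, $\iprd{\zeta}{\overline{\chi}}=0$, while $\norm{\nabla\chi_0}{L^2}=\norm{\nabla\chi}{L^2}$, reducing to the sup estimate.

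For the Poincar\'{e}-type bound, I would apply \eqref{crazy-norm-h} directly: for any $\chi\in\Soh$, Cauchy-Schwarz gives $\iprd{\zeta}{\chi}\le\norm{\zeta}{L^2}\norm{\chi}{L^2}\le C\norm{\zeta}{L^2}\norm{\nabla\chi}{L^2}$ by the continuous Poincar\'{e} inequality on zero-mean functions (recall $\Soh\subset L_0^2$), and taking the supremum yields $\norm{\zeta}{-1,h}\le C\norm{\zeta}{L^2}$. Finally, for the inverse estimate I would set $\chi = \zeta$ in the definition to get
\begin{equation*}
\norm{\zeta}{L^2}^2 = \iprd{\zeta}{\zeta} = \aiprd{\mathsf{T}_h(\zeta)}{\zeta} \le \norm{\nabla \mathsf{T}_h(\zeta)}{L^2}\norm{\nabla \zeta}{L^2} = \norm{\zeta}{-1,h}\norm{\nabla\zeta}{L^2},
\end{equation*}
then invoke the standard inverse inequality $\norm{\nabla\zeta}{L^2}\le C h^{-1}\norm{\zeta}{L^2}$, valid on the globally quasi-uniform family $\Tauh$, and cancel one factor of $\norm{\zeta}{L^2}$.

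The only mildly delicate point is ensuring that the Riesz step and the Poincar\'{e}-type bound are consistent with the mean-zero constraint imposed on $\Soh$; everything else is a direct transcription of the continuous proof. No new obstacles arise beyond recognizing that $\zeta\in\Soh$ automatically kills constant test functions, which is precisely what allows the extension from $\Soh$ to all of $S_h$ in \eqref{plus-1-minus-1-estimate}.
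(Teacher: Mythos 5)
Your proof is correct, and it is exactly the argument the authors intend: the paper states this lemma without proof, describing it only as ``the discrete analog of Lemma~\ref{lem-negative-norm},'' and your transcription of the continuous argument to $\bigl(\Soh, \aiprd{\cdot}{\cdot}\bigr)$ — Riesz representation for $\mathsf{T}_h$, Cauchy--Schwarz with equality at $\chi = \mathsf{T}_h(\zeta)$ for the sup characterization, the mean-zero splitting for \eqref{plus-1-minus-1-estimate}, Poincar\'{e} on $L_0^2$, and the standard inverse inequality for the last bound — is the standard route. No gaps.
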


	\begin{lem}
	\label{lem-bilinear-ell}
Given $\varphih^{m-1}\in\Soh$ define the bilinear form $\ell_h^m:\Soh\times\Soh\to \mathbb{R}$ via
	\begin{equation}
	\label{weak-form-L-h}
\ell_h^m(\mu,\nu):= \varepsilon \,\aiprd{\mu}{\nu} + \bform{\varphih^{m-1}}{{\bf u}}{\nu},
	\end{equation}
where, for each fixed $\mu\in\Soh$,  ${\bf u} = {\bf u}(\mu) \in {\bf X}_h$ and $p = p(\mu)\in \Soh$ solve
	\begin{subequations}
	\begin{align}
\lambda \,\aiprd{{\bf u}}{{\bf v}} + \left(\eta+\frac{1}{\tau}\right)\iprd{{\bf u}}{ {\bf v}} -\cform{{\bf v}}{p} - \gamma \,\bform{\varphih^{m-1}}{{\bf v}}{\mu} = \,& 0 ,\, &  \forall \, {\bf v}\in {\bf X}_h,
	\label{symm-u}
	\\
\cform{{\bf u}}{q} = \,& 0 ,\, & \forall \, q\in \Soh.
	\label{symm-p}
	\end{align}
	\end{subequations}
Then $\ell_h^m(\, \cdot \, , \, \cdot \, )$ is a coercive, symmetric bilinear form, and therefore, an inner product on $\Soh$.  
	\end{lem}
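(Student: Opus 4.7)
The plan is to verify the three properties---bilinearity, symmetry, and coercivity (with positive-definiteness)---in turn, the crux being that the auxiliary Darcy--Stokes-type subproblem \eqref{symm-u}--\eqref{symm-p} defining $\bu(\mu)$ is itself built from the symmetric forms $\lambda\,a(\cdot,\cdot)$ and $(\eta+\tfrac{1}{\tau})(\cdot,\cdot)$.

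\textbf{Bilinearity} is essentially free: since the right-hand side of \eqref{symm-u} depends linearly on $\mu$ and the left-hand side is a linear saddle-point operator on ${\bf X}_h\times \Soh$, the map $\mu\mapsto(\bu(\mu),p(\mu))$ is linear. Consequently $\nu\mapsto b(\varphih^{m-1},\bu(\mu),\nu)$ and $\mu\mapsto b(\varphih^{m-1},\bu(\mu),\nu)$ are linear, and adding the bilinear form $\varepsilon a(\mu,\nu)$ yields bilinearity of $\ell_h^m$.

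\textbf{Symmetry} is the key computation. Given $\mu,\nu\in\Soh$, I would test \eqref{symm-u} (written for $\mu$, producing $\bu(\mu),p(\mu)$) with the admissible $\bv=\bu(\nu)\in{\bf X}_h$. Because $\bu(\nu)$ satisfies the discrete incompressibility constraint $c(\bu(\nu),q)=0$ for all $q\in\Soh$ and $p(\mu)\in\Soh$, the pressure term drops and one obtains
\begin{equation*}
\gamma\,b(\varphih^{m-1},\bu(\nu),\mu)=\lambda\,a(\bu(\mu),\bu(\nu))+\Bigl(\eta+\tfrac{1}{\tau}\Bigr)\iprd{\bu(\mu)}{\bu(\nu)}.
\end{equation*}
Swapping the roles of $\mu$ and $\nu$ gives the same right-hand side, by symmetry of $a(\cdot,\cdot)$ and $(\cdot,\cdot)$, hence $b(\varphih^{m-1},\bu(\nu),\mu)=b(\varphih^{m-1},\bu(\mu),\nu)$. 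Combined with $a(\mu,\nu)=a(\nu,\mu)$, this delivers $\ell_h^m(\mu,\nu)=\ell_h^m(\nu,\mu)$.

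\textbf{Coercivity and positivity} follow by specializing the same identity to $\nu=\mu$, which yields
\begin{equation*}
b(\varphih^{m-1},\bu(\mu),\mu)=\frac{1}{\gamma}\Bigl(\lambda\,\norm{\nabla\bu(\mu)}{L^2}^2+\Bigl(\eta+\tfrac{1}{\tau}\Bigr)\norm{\bu(\mu)}{L^2}^2\Bigr)\ge 0,
\end{equation*}
and therefore $\ell_h^m(\mu,\mu)\ge \varepsilon\,\norm{\nabla\mu}{L^2}^2$. Since $\mu\in\Soh$ has zero mean, the discrete Poincar\'{e} inequality makes $\norm{\nabla\mu}{L^2}$ equivalent to $\norm{\mu}{H^1}$, giving $\ell_h^m(\mu,\mu)\ge C\norm{\mu}{H^1}^2$; in particular $\ell_h^m(\mu,\mu)=0$ forces $\mu=0$, so $\ell_h^m$ is an inner product.

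The only subtlety I anticipate is bookkeeping around the saddle-point sub-system: one must first note that \eqref{symm-u}--\eqref{symm-p} is uniquely solvable (the inf--sup stable Taylor--Hood pair plus the positive $L^2$ contribution $\eta+1/\tau>0$ guarantee a unique $(\bu(\mu),p(\mu))\in{\bf X}_h\times\Soh$) so that the map $\mu\mapsto\bu(\mu)$ is well defined, and then carefully verify that $c(\bu(\nu),p(\mu))=0$ requires both $\bu(\nu)\in{\bf V}_h$ and $p(\mu)\in\Soh$, which is exactly how the two unknowns are posed.
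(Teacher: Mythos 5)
Your proposal is correct and follows essentially the same route as the paper: test the flow subproblem for $\bu(\mu)$ with $\bv=\bu(\nu)$, use the discrete incompressibility of $\bu(\nu)$ together with $p(\mu)\in\Soh$ to eliminate the pressure, and read off the symmetric representation $\ell_h^m(\mu,\nu)=\varepsilon\,\aiprd{\mu}{\nu}+\frac{\lambda}{\gamma}\aiprd{\bu(\mu)}{\bu(\nu)}+\frac{\eta+1/\tau}{\gamma}\iprd{\bu(\mu)}{\bu(\nu)}$, from which symmetry and coercivity are immediate. Your additional remarks on the well-posedness of the saddle-point subproblem and the Poincar\'{e} step for zero-mean functions are exactly the details the paper leaves implicit.
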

	\begin{proof}
The solvability and stability  of the flow problem follows from the fact that $\left({\bf X}_h, \Soh\right)$ form a stable pair for the Darcy-Stokes problem. Now, let $\mu_i\in\Soh$, $i = 1,2$.  Set ${\bf u}_i = {\bf u}(\mu_i) \in {\bf X}_h$ and $p_i = p(\mu_i)\in \Soh$, $i=1,2$, with ${\bf u}$ and $p$ defined in \eqref{symm-u} and \eqref{symm-p} above. Then with $\alpha,\beta \in \left\{1,2\right\}$, 
	\begin{subequations}
	\begin{align}
\lambda \,\aiprd{{\bf u}_\alpha}{{\bf u}_\beta} + \left(\eta+\frac{1}{\tau}\right)\iprd{{\bf u}_\alpha}{ {\bf u}_\beta} -\cform{{\bf u}_\beta}{p_\alpha} - \gamma \,\bform{\varphih^{m-1}}{{\bf u}_\beta}{\mu_\alpha} = \,& 0 ,
	\\
\cform{{\bf u}_\beta}{p_\alpha} = \,& 0 ,
	\end{align}
	\end{subequations}
and setting $\alpha = 2$, $\beta = 1$ in the last two equations, we have
	\begin{align}
\ell_h^m(\mu_1,\mu_2) &= \varepsilon \,\aiprd{\mu_1}{\mu_2} + \bform{\varphih^{m-1}}{{\bf u}_1}{\mu_2}
	\nonumber
	\\
&= \varepsilon \,\aiprd{\mu_1}{\mu_2} +  \frac{\lambda}{\gamma} \,\aiprd{{\bf u}_2}{{\bf u}_1} + \frac{\eta+\frac{1}{\tau}}{\gamma}\iprd{{\bf u}_2}{ {\bf u}_1}.
	\end{align}
It is now clear that $\ell_h^m(\, \cdot \, , \, \cdot \, )$ is a coercive, symmetric bilinear form on $\Soh$.
	\end{proof}

Owing to the last result, we can define an invertible linear operator ${\mathcal L}_{h,m} : \Soh \rightarrow  \Soh$ via the following problem: given $\zeta\in \Soh$,  find $\mu\in \Soh$ such that
	\begin{equation}
\ell_h^m\left(\mu,\nu\right) = -\left(\zeta,\nu\right)_{L^2}   \qquad \forall \, \nu\in \Soh.
	\end{equation}
This clearly has a unique solution because $\ell_h^m(\, \cdot \, , \, \cdot \, )$ is an inner product on $\Soh$. We write ${\mathcal L}_{h,m}(\mu) = -\zeta$, or, equivalently, $\mu = -{\mathcal L}_{h,m}^{-1}(\zeta)$.

We now wish to define another discrete negative norm.  Again we omit the details for the sake of brevity.

	\begin{lem}
	\label{lem-bilinear-negative-norm}
Let $\zeta,\, \xi \in \Soh$ and suppose  $\mu_\zeta,\, \mu_\xi\in\Soh$ are the unique weak  solutions to ${\mathcal L}_{h,m}\left(\mu_\zeta\right) = -\zeta$ and ${\mathcal L}_{h,m}\left(\mu_\xi\right) = -\xi$.  Define
	\begin{equation}
\left(\zeta,\xi\right)_{{\mathcal L}_{h,m}^{-1}} :=\ell_h^m\left(\mu_\zeta,\mu_\xi\right) =-\left(\zeta,\mu_\xi\right)_{L^2}  =-\left(\mu_\zeta,\xi\right)_{L^2}.
	\label{crazy-inner-product-h-ell}
	\end{equation}
$\left(\, \cdot\, ,\, \cdot\, \right)_{{\mathcal L}_{h,m}^{-1}}$ defines an  inner product on $\Soh$. The induced norm is 
	\begin{equation}
\norm{\zeta}{\mathcal{L}_{h,m}^{-1}} 
= \sqrt{\left(\zeta,\zeta\right)_{{\mathcal L}_{h,m}^{-1}}}, \quad \forall \, \zeta\in \Soh.
	\label{crazy-norm-h-ell}
	\end{equation}
	\end{lem}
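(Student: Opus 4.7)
My plan is to proceed by directly verifying the three inner product axioms (symmetry, bilinearity, positive definiteness) using the symmetry and coercivity of $\ell_h^m(\cdot,\cdot)$ established in Lemma~\ref{lem-bilinear-ell}, together with the invertibility of $\mathcal{L}_{h,m}$ that was noted immediately before the statement. The three alternative formulas in \eqref{crazy-inner-product-h-ell} are almost immediate from the defining relation $\ell_h^m(\mu_\zeta,\nu) = -\iprd{\zeta}{\nu}$, tested against $\nu = \mu_\xi$, combined with symmetry.

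First, I would establish the bilinearity of $(\cdot,\cdot)_{\mathcal{L}_{h,m}^{-1}}$. Since $\mathcal{L}_{h,m}: \Soh \to \Soh$ is a linear bijection, the map $\zeta \mapsto \mu_\zeta = -\mathcal{L}_{h,m}^{-1}(\zeta)$ is linear, so $\mu_{a\zeta_1 + b\zeta_2} = a\mu_{\zeta_1} + b\mu_{\zeta_2}$. Bilinearity of $(\zeta,\xi)_{\mathcal{L}_{h,m}^{-1}} = \ell_h^m(\mu_\zeta,\mu_\xi)$ then follows from bilinearity of $\ell_h^m$. Symmetry is immediate from the symmetry of $\ell_h^m$ proved in Lemma~\ref{lem-bilinear-ell}. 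The three equal expressions follow by using $\nu = \mu_\xi$ in the defining relation $\ell_h^m(\mu_\zeta,\nu) = -\iprd{\zeta}{\nu}$, and symmetrically with roles swapped.

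For positive definiteness, I would observe that $(\zeta,\zeta)_{\mathcal{L}_{h,m}^{-1}} = \ell_h^m(\mu_\zeta,\mu_\zeta)$. From the explicit expression derived in the proof of Lemma~\ref{lem-bilinear-ell}, namely
\begin{equation*}
\ell_h^m(\mu_\zeta,\mu_\zeta) = \varepsilon\,\norm{\nabla \mu_\zeta}{L^2}^2 + \frac{\lambda}{\gamma}\,\norm{\nabla {\bf u}(\mu_\zeta)}{L^2}^2 + \frac{\eta+\tau^{-1}}{\gamma}\,\norm{{\bf u}(\mu_\zeta)}{L^2}^2,
\end{equation*}
this quantity is nonnegative and vanishes only if $\nabla\mu_\zeta \equiv 0$, i.e., $\mu_\zeta$ is a constant. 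Since $\mu_\zeta \in \Soh$ has zero mean, this forces $\mu_\zeta = 0$, and invertibility of $\mathcal{L}_{h,m}$ then gives $\zeta = 0$. The expression for the induced norm is then just the square root, as in \eqref{crazy-norm-h-ell}.

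The entire argument is routine once the coercivity, symmetry, and invertibility machinery from Lemma~\ref{lem-bilinear-ell} is in hand. The only step requiring a moment's thought is the positive definiteness: one must remember that mean-zero elements of $\Soh$ that are constant must vanish, so the coercivity of $\ell_h^m$ really does control $\mu_\zeta$ and, via $\mathcal{L}_{h,m}^{-1}$, controls $\zeta$ itself. There is no genuine obstacle here — the proof is essentially a verification, which is presumably why the authors indicated they would omit the details for brevity.
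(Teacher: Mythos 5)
Your proof is correct and is precisely the routine verification that the paper explicitly omits ``for the sake of brevity'': bilinearity from the linearity of $\mathcal{L}_{h,m}^{-1}$, symmetry and the three equivalent expressions from the defining relation $\ell_h^m(\mu_\zeta,\nu)=-\iprd{\zeta}{\nu}$ together with the symmetry of $\ell_h^m$ from Lemma~\ref{lem-bilinear-ell}, and positive definiteness from the explicit coercive form of $\ell_h^m(\mu_\zeta,\mu_\zeta)$ plus the observation that a mean-zero constant in $\Soh$ vanishes and that $\mathcal{L}_{h,m}$ is invertible. There is nothing to add or correct.
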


Using our discrete negative norm we can define a variational problem closely related to our fully discrete scheme. To keep the discussion brief, we omit the proof of the following result that guarantees the unique solvability of \eqref{scheme-a-mean-zero} -- \eqref{scheme-e-mean-zero}.  See, for example,~\cite{feng12} for the details of a similar argument.

	\begin{lem}
	\label{lem-minimizer}
Let $\varphih^{m-1}\in\Soh$ be given.  Take $\varphi_{h,\star}^{m-1}$ as in \eqref{phi-m-1-star}.  For all $\varphih\in\Soh$, define the nonlinear functional 
	\begin{eqnarray}
G_h(\varphih) &:=& \frac{\tau}{2}\norm{\frac{\varphih-\varphi_{h,\star}^{m-1}}{\tau}}{\mathcal{L}_{h,m}^{-1}}^2 +\frac{1}{4\varepsilon}\norm{\varphih+\avephio}{L^4}^4  +\frac{\varepsilon}{2}\norm{\nabla\varphih}{L^2}^2
	\nonumber
	\\
&& -\frac{1}{\varepsilon}\iprd{\varphih^{m-1}+\avephio}{\varphih} + \frac{\theta}{2}\norm{\varphih}{-1,h}^2.
	\end{eqnarray}
$G_h$ is strictly convex and coercive on the linear subspace $\Soh$.  Consequently, $G_h$ has a unique minimizer, call it  $\varphih^m\in\Soh$.  Moreover, $\varphih^m\in\Soh$ is the unique minimizer of $G_h$ if and only if it is the unique solution to
	\begin{equation}
\varepsilon^{-1}\iprd{\left(\varphih^m+\avephio\right)^3}{\psi} +\varepsilon \,\aiprd{\varphih^m}{\psi}  - \iprd{\mu_{h,\star}^m}{\psi} +\iprd{\xi_h^m}{\psi}  = \varepsilon^{-1}\iprd{\varphih^{m-1}+\avephio}{\psi}
	\label{nonlinear-1}
	\end{equation}
for all $\psi\in\Soh$, where $\mu_{h,\star}^m,\xi_h^m \in \Soh$  are the unique solutions to
	\begin{align}
\ell_h^m\left(\mu_{h,\star}^m,\nu\right)  = -\iprd{\frac{\varphih^m-\varphi_{h,\star}^{m-1}}{\tau}}{\nu} &  \qquad \forall \,  \nu\in\Soh,
	\label{nonlinear-2}
	\\
\aiprd{\xi_h^m}{\zeta} = \theta \,\iprd{\varphih^m}{\zeta} &  \qquad \forall \,  \zeta\in\Soh.
	\label{nonlinear-3}
	\end{align}
	\end{lem}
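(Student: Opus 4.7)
The plan is to verify that $G_h$ is continuous, strictly convex, and coercive on the finite-dimensional linear space $\Soh$, so that a unique minimizer exists by the direct method, and then to compute the first variation term-by-term to recover \eqref{nonlinear-1}--\eqref{nonlinear-3}.

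For strict convexity, I would decompose $G_h$ into its five summands. The term $\frac{\tau}{2}\norm{(\varphih-\varphi_{h,\star}^{m-1})/\tau}{\mathcal{L}_{h,m}^{-1}}^2$ is the square of an affine map into the inner-product space $\left(\Soh,(\cdot,\cdot)_{\mathcal{L}_{h,m}^{-1}}\right)$, well-defined by Lemmas~\ref{lem-bilinear-ell} and~\ref{lem-bilinear-negative-norm}, and hence convex. The term $\frac{1}{4\varepsilon}\norm{\varphih+\avephio}{L^4}^4$ is strictly convex since $t\mapsto t^4$ is strictly convex on $\mathbb{R}$ and $\varphih\mapsto\varphih+\avephio$ is affine. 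The Dirichlet term $\frac{\varepsilon}{2}\norm{\nabla\varphih}{L^2}^2$ is strictly convex on $\Soh$ by the discrete Poincar\'{e} inequality for mean-zero functions, and $\frac{\theta}{2}\norm{\varphih}{-1,h}^2$ is a convex quadratic by Lemma~\ref{lem-negative-norm-discrete}. The linear term contributes nothing. The sum is therefore strictly convex. For coercivity, the bound $G_h(\varphih)\ge \frac{\varepsilon}{2}\norm{\nabla\varphih}{L^2}^2-\varepsilon^{-1}\norm{\varphih^{m-1}+\avephio}{L^2}\norm{\varphih}{L^2}$, combined with Poincar\'{e} on $\Soh$, forces $G_h(\varphih)\to\infty$ as $\norm{\varphih}{H^1}\to\infty$. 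Continuity is clear, and so a unique minimizer $\varphih^m\in\Soh$ exists.

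The minimizer is characterized by the vanishing of the G\^{a}teaux derivative $\left.\tfrac{d}{ds}G_h(\varphih^m+s\psi)\right|_{s=0}=0$ for every $\psi\in\Soh$. Differentiating term by term gives, respectively, $\varepsilon^{-1}\iprd{(\varphih^m+\avephio)^3}{\psi}$ from the quartic, $\varepsilon\,\aiprd{\varphih^m}{\psi}$ from the Dirichlet term, and $-\varepsilon^{-1}\iprd{\varphih^{m-1}+\avephio}{\psi}$ from the linear term. For the $(-1,h)$ piece, using the self-adjointness of $\mathsf{T}_h$ from Lemma~\ref{lem-negative-norm-discrete} in the identity $\norm{\varphih}{-1,h}^2=\iprd{\varphih}{\mathsf{T}_h\varphih}$, the derivative equals $\theta\iprd{\mathsf{T}_h\varphih^m}{\psi}=\iprd{\xi_h^m}{\psi}$ after defining $\xi_h^m:=\theta\,\mathsf{T}_h(\varphih^m)$, which is precisely the $\xi_h^m$ characterized by \eqref{nonlinear-3}. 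For the $\mathcal{L}_{h,m}^{-1}$ term, the symmetry recorded in \eqref{crazy-inner-product-h-ell} yields $\left((\varphih^m-\varphi_{h,\star}^{m-1})/\tau,\psi\right)_{\mathcal{L}_{h,m}^{-1}}=-\iprd{\mu_{h,\star}^m}{\psi}$, where $\mu_{h,\star}^m\in\Soh$ is exactly the solution of \eqref{nonlinear-2}. Summing and rearranging produces \eqref{nonlinear-1}. Conversely, strict convexity of $G_h$ implies that any triple $(\varphih^m,\mu_{h,\star}^m,\xi_h^m)$ solving \eqref{nonlinear-1}--\eqref{nonlinear-3} must be the unique minimizer.

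The main obstacle is the careful handling of the $\mathcal{L}_{h,m}^{-1}$-norm term, whose variation implicitly couples through the Darcy-Stokes auxiliary problem that defines $\mu_{h,\star}^m$. The symmetry of $\ell_h^m$ established in Lemma~\ref{lem-bilinear-ell} is exactly what lets the G\^{a}teaux derivative collapse to the clean $L^2$ pairing $-\iprd{\mu_{h,\star}^m}{\psi}$, rather than producing stray factors of two or asymmetric terms; this is the step most likely to hide a subtle sign or coefficient error if rushed.
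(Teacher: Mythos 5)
Your proof is correct, and it is exactly the convex-optimization argument the paper has in mind: the paper explicitly omits the proof of this lemma, pointing to a similar argument in the cited reference, and your term-by-term treatment of convexity, coercivity, and the G\^{a}teaux derivative (in particular the sign bookkeeping for the $\mathcal{L}_{h,m}^{-1}$ and $(-1,h)$ terms via the symmetry identities \eqref{crazy-inner-product-h-ell} and \eqref{crazy-inner-product-h}) fills that gap faithfully. No issues.
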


Finally, we are in the position to prove the unconditional unique solvability of our scheme.

	\begin{thm}
	\label{thm-existence-uniqueness}
The scheme \eqref{scheme-a} -- \eqref{scheme-e}, or, equivalently, the scheme \eqref{scheme-a-mean-zero} -- \eqref{scheme-e-mean-zero}, is uniquely solvable for any  mesh parameters $\tau$ and $h$ and for any of the model parameters.
	\end{thm}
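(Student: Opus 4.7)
The plan is to reduce unique solvability to the convex optimization result of Lemma~\ref{lem-minimizer}, working with the equivalent reformulation \eqref{scheme-a-mean-zero}--\eqref{scheme-e-mean-zero}. The main difficulty---the simultaneous coupling of $\phih^m$, $\muh^m$, $\xih^m$, $\buh^m$, and $p_h^m$---is dissolved by the splitting $\buh^m = \buh^{m,0}+\buh^{m,1}$: the component $\buh^{m,0}$ carries the contribution of $\buh^{m-1}$ and is independent of $\muh^m$, while $\buh^{m,1}$ is linear (not merely affine) in $\muh^m$. This linearity is precisely what allows $\ell_h^m(\cdot,\cdot)$ in Lemma~\ref{lem-bilinear-ell} to serve as a symmetric coercive bilinear form on $\Soh$.

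First I would solve the decoupled generalized Stokes subproblem \eqref{scheme-d-mean-zero-u0}--\eqref{scheme-e-mean-zero-u0} for $(\buh^{m,0},p_h^{m,0})\in {\bf X}_h\times \Soh$: its right-hand side depends only on $\buh^{m-1}$, the bilinear form $\lambda\,\aiprd{\cdot}{\cdot}+(\eta+\tau^{-1})\iprd{\cdot}{\cdot}$ is coercive on ${\bf X}_h$, and $({\bf X}_h,\Soh)$ is an inf-sup-stable Taylor--Hood pair, so standard saddle-point theory yields a unique solution. The quantity $\varphi_{h,\star}^{m-1}$ in \eqref{phi-m-1-star} is then well defined. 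I would next invoke Lemma~\ref{lem-minimizer}: the strictly convex, coercive functional $G_h$ admits a unique minimizer $\varphih^m\in\Soh$, and the Euler--Lagrange conditions \eqref{nonlinear-1}--\eqref{nonlinear-3} uniquely determine $\mu_{h,\star}^m,\xih^m\in\Soh$. Setting $\muh^m := \mu_{h,\star}^m$, the definition of $\ell_h^m$ delivers the unique pair $(\buh^{m,1},p_h^{m,1})\in {\bf X}_h\times\Soh$ that satisfies \eqref{scheme-d-mean-zero}--\eqref{scheme-e-mean-zero}. A direct verification then shows that \eqref{scheme-a-mean-zero} is equivalent to \eqref{nonlinear-2} under the identification $\ell_h^m(\muh^m,\nu) = \varepsilon\,\aiprd{\muh^m}{\nu}+\bform{\varphih^{m-1}}{\buh^{m,1}}{\nu}$, while \eqref{scheme-b-mean-zero} is exactly \eqref{nonlinear-1}.

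For uniqueness in the reformulated scheme, any solution yields, by reversing the reduction, a critical point of the strictly convex $G_h$, which must coincide with the unique minimizer just constructed. Unique solvability of the original scheme \eqref{scheme-a}--\eqref{scheme-e} then follows from the one-to-one correspondence $\phih^m=\varphih^m+\avephio$, $\buh^m = \buh^{m,0}+\buh^{m,1}$, $p_h^m = p_h^{m,0}+p_h^{m,1}$ between the two formulations, together with the fact that $\avephio$ and the splitting of $\buh^m$ and $p_h^m$ are themselves uniquely determined.

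The crux of the argument---and the only real obstacle---has been absorbed into Lemma~\ref{lem-minimizer}, namely the strict convexity of $G_h$. This hinges on the velocity contribution being the manifestly convex quadratic $\tfrac{\tau}{2}\,\norm{(\varphih-\varphi_{h,\star}^{m-1})/\tau}{\mathcal{L}_{h,m}^{-1}}^2$, which in turn is available only because, after eliminating $(\buh^{m,1},p_h^{m,1})$, the map $\muh^m\mapsto \buh^{m,1}$ is linear; this is exactly the reason the authors introduced the reformulation \eqref{scheme-a-mean-zero}--\eqref{scheme-e-mean-zero} in the first place, and without it a direct convex-optimization proof on the original scheme would not be available.
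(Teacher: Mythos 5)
Your overall strategy is the paper's own: decouple $({\bf u}_h^{m,0},p_h^{m,0})$ via the generalized Stokes subproblem, reduce the remaining system to the strictly convex minimization of Lemma~\ref{lem-minimizer}, and transfer unique solvability back to \eqref{scheme-a}--\eqref{scheme-e} through the one-to-one correspondence of solution sets. There is, however, one concrete error in the reconstruction step: you set $\muh^m := \mu_{h,\star}^m$ and assert that \eqref{scheme-b-mean-zero} ``is exactly'' \eqref{nonlinear-1}. It is not: \eqref{nonlinear-1} is posed only for test functions $\psi\in\Soh$, whereas \eqref{scheme-b-mean-zero} must hold for all $\psi\in S_h$, in particular for $\psi\equiv 1$. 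Testing \eqref{scheme-b-mean-zero} with $\psi\equiv 1$ (using $\aiprd{\varphih^m}{1}=0$, $\iprd{\xi_h^m}{1}=0$, and $\iprd{\varphih^{m-1}}{1}=0$) forces
\begin{equation*}
\iprd{\muh^m}{1} = \varepsilon^{-1}\iprd{\left(\varphih^m+\avephio\right)^3}{1} - \varepsilon^{-1}\avephio\,|\Omega| ,
\end{equation*}
which is generically nonzero, so the mean-zero function $\mu_{h,\star}^m$ cannot itself satisfy \eqref{scheme-b-mean-zero}. This is precisely why the scheme takes $\muh^m\in S_h$ rather than $\Soh$ and why the paper defines the constant $\overline{\mu_h^m}$ in \eqref{chem-pot-average} and sets $\muh^m := \mu_{h,\star}^m+\overline{\mu_h^m}$.

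Once the constant is added, you should also verify that it does not disturb the equations you have already matched: in \eqref{scheme-a-mean-zero} the term $\aiprd{\muh^m}{\nu}$ is insensitive to constants, and in \eqref{scheme-d-mean-zero} the extra forcing $\gamma\,\overline{\mu_h^m}\,\bform{\varphih^{m-1}}{{\bf v}}{1} = -\gamma\,\overline{\mu_h^m}\,\cform{{\bf v}}{\varphih^{m-1}}$ is a discrete gradient (note $\varphih^{m-1}\in\Soh$) that is absorbed into $p_h^{m,1}$ and leaves ${\bf u}_h^{m,1}$ unchanged, so that ${\bf u}_h^{m,1}={\bf u}(\mu_{h,\star}^m)$ as in Lemma~\ref{lem-bilinear-ell} and \eqref{scheme-a-mean-zero} still reduces to \eqref{nonlinear-2}. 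With this mean-value correction and check, your argument is complete and coincides with the paper's proof.
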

	
	\begin{proof}
Suppose $\iprd{\varphih^{m-1}}{1}=0$. It is clear that a necessary condition for solvability of \eqref{scheme-a-mean-zero} -- \eqref{scheme-b-mean-zero} is that 
	\begin{equation}
\left(\varphih^m,1\right) = \bigl(\varphih^{m-1},1\bigr) = 0,
	\end{equation}
as can be found by taking $\nu\equiv 1$ in \eqref{scheme-a-mean-zero}.   Now, let $ \varphih^m,\mu_{h,\star}^m  \in\Soh\times\Soh$ be a solution  of \eqref{nonlinear-1} -- \eqref{nonlinear-3}.  (The other variables may be regarded as auxiliary.)  Set
	\begin{equation}
\overline{\mu_h^m}:=\frac{1}{\varepsilon|\Omega|}\iprd{(\varphih^m+\avephio)^3 - \left(\varphih^m+\avephio\right)}{1} =\frac{1}{\varepsilon|\Omega|}\iprd{(\varphih^m+\avephio)^3 }{1} -\frac{\avephio}{\varepsilon} ,
	\label{chem-pot-average}
	\end{equation}
and define $\muh^m:=\mu_{h,\star}^m+\overline{\mu_h^m}$.  There is a one-to-one correspondence of the respective solution sets: $\varphih^m,\mu_{h,\star}^m \in\Soh\times\Soh$ is a solution to \eqref{nonlinear-1} -- \eqref{nonlinear-3}, if and only if $\varphih^m,\muh^m  \in\Soh\times S_h$ is a solution to \eqref{scheme-a-mean-zero} -- \eqref{scheme-e-mean-zero}, if and only if $ \phih^m,\muh^m \in S_h\times S_h$ is a solution to \eqref{scheme-a} -- \eqref{scheme-e}, where
	\begin{equation}
\phih^m = \varphih^m+\avephio,\quad \muh^m = \mu_{h,\star}^m+\overline{\mu_h^m}.
	\end{equation}
But \eqref{nonlinear-1} -- \eqref{nonlinear-3} admits a unique solution, which proves that \eqref{scheme-a} -- \eqref{scheme-e} and \eqref{scheme-a-mean-zero} -- \eqref{scheme-e-mean-zero} are uniquely solvable.
	\end{proof}

	\subsection{Unconditional Energy Stability}
	\label{subsec-energy-stability}

We now show that the solutions to our scheme enjoy stability properties that are similar to those of the PDE solutions, and, moreover, these properties hold regardless of the sizes of $h$ and $\tau$.  The first property, the unconditional energy stability, is a direct result of the convex decomposition represented in the scheme.
	
	\begin{lem}
	\label{lem-energy-law}
Let $(\phih^m, \muh^m, {\bf u}_h^m) \in S_h\times S_h\times {\bf X}_h$ be the unique solution of  \eqref{scheme-a}--\eqref{scheme-e}, with the other variables regarded as auxiliary.  Then the following energy law holds for any $h,\,  \tau >0$:
	\begin{align}
E\left({\bf u}_h^{\ell},\phih^\ell\right) &+\tau \varepsilon \sum_{m=1}^\ell \norm{\nabla\muh^m}{L^2}^2 + \tau \frac{\lambda}{\gamma} \sum_{m=1}^\ell \norm{\nabla\buh^m}{L^2}^2 + \tau \frac{\eta}{\gamma} \sum_{m=1}^\ell \norm{\buh^m}{L^2}^2 
	\nonumber
	\\
&+ \tau^2 \sum_{m=1}^\ell \Biggl\{ \, \frac{\varepsilon}{2} \norm{\nabla\left(\dtau  \phih^m\right)}{L^2}^2+ \frac{1}{2\gamma} \norm{\dtau\buh^m}{L^2}^2 + \frac{1}{4\varepsilon}\norm{\dtau( \phih^m)^2}{L^2}^2
	\nonumber
	\\
&\quad  + \frac{1}{2\varepsilon}\norm{ \phih^m \dtau \phih^m}{L^2}^2  +\frac{1}{2\varepsilon}\norm{\dtau \phih^m}{L^2}^2 +\frac{\theta}{2}\norm{\dtau\phih^m}{-1,h}^2\, \Biggr\} = E\left({\bf u}_h^0,\phih^0\right),
	\label{ConvSplitEnLaw}
	\end{align}
for all $0\leq \ell \leq M$.
	\end{lem}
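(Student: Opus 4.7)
The plan is to derive a per-step energy identity and then telescope. Specifically, I would test \eqref{scheme-a} with $\nu = \tau \muh^m$, test \eqref{scheme-b} with $\psi = \phih^m - \phih^{m-1}$, and test \eqref{scheme-d} with ${\bf v} = \frac{\tau}{\gamma}\buh^m$, and add the resulting three identities. The coupling $\iprd{\muh^m}{\phih^m - \phih^{m-1}}$ that appears in both \eqref{scheme-a} and \eqref{scheme-b} cancels, and the convective cross term $\bform{\phih^{m-1}}{\buh^m}{\muh^m}$ coming from \eqref{scheme-a} is eliminated by its counterpart in \eqref{scheme-d}. Moreover, $\cform{\buh^m}{p_h^m}$ vanishes by \eqref{scheme-e} since $p_h^m \in \Soh$. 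The linear dissipative contributions $\tau\varepsilon\norm{\nabla\muh^m}{L^2}^2$, $\tau\frac{\lambda}{\gamma}\norm{\nabla\buh^m}{L^2}^2$ and $\tau\frac{\eta}{\gamma}\norm{\buh^m}{L^2}^2$ appear immediately from the test choices.

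Next, I would rewrite each remaining term as a difference of energies plus a non-negative, order-$\tau^2$ numerical-dissipation term using the elementary identities
\begin{equation*}
2 a (a-b) = a^2 - b^2 + (a-b)^2, \qquad 2 b (a-b) = (a^2 - b^2) - (a-b)^2,
\end{equation*}
\begin{equation*}
4 a^3 (a - b) = (a^4 - b^4) + (a^2 - b^2)^2 + 2 a^2 (a - b)^2.
\end{equation*}
The first, applied to $\buh^m$ and to $\nabla\phih^m$, yields the kinetic and gradient contributions to $E$ together with the $\frac{1}{2\gamma}\tau^2\norm{\dtau\buh^m}{L^2}^2$ and $\frac{\varepsilon}{2}\tau^2\norm{\nabla\dtau\phih^m}{L^2}^2$ dissipations; the third, applied to $(\phih^m)^3(\phih^m - \phih^{m-1})$, produces the $L^4$-quartic energy increment plus the dissipations $\frac{1}{4\varepsilon}\tau^2\norm{\dtau(\phih^m)^2}{L^2}^2$ and $\frac{1}{2\varepsilon}\tau^2\norm{\phih^m\dtau\phih^m}{L^2}^2$; the second, applied to the concave part $-\phih^{m-1}(\phih^m - \phih^{m-1})$, produces the $L^2$-quadratic increment plus $\frac{1}{2\varepsilon}\tau^2\norm{\dtau\phih^m}{L^2}^2$.

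The step requiring the most care is the long-range coupling $\iprd{\xih^m}{\phih^m - \phih^{m-1}}$ produced by \eqref{scheme-b}. From \eqref{scheme-c}, and since $\phih^m - \avephio \in \Soh$ by the mass-conservation remark, one may identify $\xih^m = \theta \,\mathsf{T}_h(\phih^m - \avephio)$ via the operator of Lemma~\ref{lem-negative-norm-discrete}. Consequently,
\begin{equation*}
\iprd{\xih^m}{\phih^m - \phih^{m-1}} = \theta \iprdmone{\phih^m - \avephio}{(\phih^m - \avephio) - (\phih^{m-1} - \avephio)},
\end{equation*}
and applying the $2 a (a-b)$ identity now in the $\iprdmone{\cdot}{\cdot}$ inner product contributes exactly the $\frac{\theta}{2}\norm{\phih^m - \avephio}{-1,h}^2$ difference needed for $E$ plus the $\frac{\theta}{2}\tau^2\norm{\dtau\phih^m}{-1,h}^2$ dissipation. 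This is the one place where Lemma~\ref{lem-negative-norm-discrete} plays an essential role, and it is what I anticipate being the main (if mild) obstacle, since it requires tracking that $\phih^m - \avephio$ and $\dtau\phih^m$ lie in $\Soh$ so that the discrete negative norm is defined.

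Finally, summing the per-step identity from $m = 1$ to $\ell$ telescopes the energy differences into $E(\buh^\ell, \phih^\ell) - E(\buh^0, \phih^0)$; the constant $\frac{|\Omega|}{4\varepsilon}$ appearing in the expansion of $E$ cancels in the subtraction, and the remaining accumulated dissipative terms precisely reproduce the sums on the left-hand side of \eqref{ConvSplitEnLaw}.
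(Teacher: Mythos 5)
Your proposal is correct and follows essentially the same route as the paper: test with (scaled versions of) $\muh^m$, $\dtau\phih^m$, and $\frac{1}{\gamma}\buh^m$, cancel the coupling and pressure terms, and expand each remaining term via the standard $2a(a-b)$ and quartic identities into an energy difference plus a non-negative $\mathcal{O}(\tau^2)$ dissipation, then telescope. The only cosmetic difference is that you handle the long-range term by explicitly identifying $\xih^m = \theta\,\mathsf{T}_h(\phih^m-\avephio)$, whereas the paper tests \eqref{scheme-c} with $\zeta = -\mathsf{T}_h(\dtau\phih^m)$; these are equivalent and both land on the same $\iprdmone{\phih^m-\avephio}{\dtau\phih^m}$ identity.
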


	\begin{proof}
We first set $\nu= \muh^m$ in \eqref{scheme-a}, $\psi = \dtau \phih^m$ in \eqref{scheme-b}, $\zeta = -\mathsf{T}_h\left(\dtau \phih^m\right)$ in \eqref{scheme-c}, ${\bf v} = \frac{1}{\gamma}\buh^m$ in \eqref{scheme-d}, $q = \frac{1}{\gamma}p_h^m$ in \eqref{scheme-e}, to obtain
	\begin{eqnarray}
\iprd{\dtau \phih^m}{\muh^m} + \varepsilon \norm{\nabla\muh^m}{L^2}^2 + \bform{\phih^{m-1}}{\buh^m}{\muh^m} &=  & 0,
	\label{tested-energy-1}
	\\
	\frac{1}{\varepsilon}\iprd{\left(\phih^m\right)^3 -\phih^{m-1}}{\dtau\phih^m} + \varepsilon \,\aiprd{\phih^m}{\dtau\phih^m} -\iprd{\muh^m}{\dtau\phih^m} + \iprd{\xi_h^m}{\dtau\phih^m} &=& 0,
	\label{tested-energy-2}
	\\
-\aiprd{\xi_h^m}{\mathsf{T}_h\left(\dtau \phih^m\right)} + \theta \,\iprd{\phih^m-\avephio}{\mathsf{T}_h\left(\dtau \phih^m\right)} & =  & 0 ,
	\label{tested-energy-3} 
	\\
\frac{1}{\gamma}\iprd{\delta_\tau{\bf u}^m_h}{\buh^m}+ \frac{\lambda}{\gamma} \norm{\nabla\buh^m}{L^2}^2 + \frac{\eta}{\gamma} \norm{\buh^m}{L^2}^2 -\frac{1}{\gamma}\cform{\buh^m}{p^m_h} -\bform{\phih^{m-1}}{\buh^m}{\muh^m} & = & 0  ,
	\label{tested-energy-4} 
	\\
\frac{1}{\gamma}\cform{\buh^m}{p^m} & = & 0 .
	\label{tested-energy-5}
	\end{eqnarray}
Combining \eqref{tested-energy-1} -- \eqref{tested-energy-5}, using the identities 
	\begin{align*}
\left(\dtau \buh^m, \buh^m \right) = &\frac12\,\left[\,\dtau\norm{\buh^m}{L^2}^2 + \tau \norm{\dtau \buh^m}{L^2}^2 \,\right],
\\
\aiprd{\phih^m}{\dtau\phih^m}  = &\frac12\,\left[\,\dtau\norm{\nabla\phih^m}{L^2}^2  +\tau \norm{\nabla\dtau \phih^m}{L^2}^2 \,\right],
	\\
\iprd{\left(\phih^m\right)^3-\phih^{m-1}}{\dtau \phih^m} = & \frac14\, \dtau \norm{\left( \phih^m\right)^2-1}{L^2}^2  +\frac{\tau}4 \Bigl[\norm{\dtau( \phih^m)^2}{L^2}^2
	\\
& +2\norm{\phih^m \dtau \phih^m}{L^2}^2 +2\norm{\dtau \phih^m}{L^2}^2 \, \Bigr],
	\\
\iprdmone{\phih^m-\avephio}{\dtau\phih^m} =& \frac{1}{2}\left[\dtau\norm{\phih^m-\avephio}{-1,h}^2 + \tau\norm{\dtau\phih^m}{-1,h}^2\right], 
	\end{align*}
and applying the operator $\tau\sum_{m=1}^\ell$ to the combined equation, the result is obtained.
	\end{proof}

The discrete energy law immediately implies the following uniform (in  $h$ and $\tau$) \emph{a priori} estimates for $\phih^m$, $\muh^m$, and $\buh^m$. Note that, from this point, we will not track the dependence of the estimates on the interface parameter $\varepsilon>0$, though this may be of importance, especially if $\varepsilon$ is made smaller.

	\begin{lem}
	\label{lem-a-priori-stability-trivial}
Let $(\phih^m, \muh^m, {\bf u}_h^m) \in S_h\times S_h\times {\bf X}_h$ be the unique solution of  \eqref{scheme-a}--\eqref{scheme-e}.  Suppose that $E\left(\buh^0,\phih^0\right)<C$, independent of $h$.  Then the following estimates hold for any $h,\, \tau>0$:
	\begin{align}
\max_{0\leq m\leq M} \left[ \norm{\buh^m}{L^2}^2+ \norm{\nabla\phih^m}{L^2}^2 + \norm{\left( \phih^m\right)^2-1}{L^2}^2 + \norm{\phih^m-\avephio}{-1,h}^2\right] &\leq C, 
	\label{Linf-u-phi-discrete}
	\\
\max_{0\leq m\leq M}\left[\norm{\phih^m}{L^4}^4 +\norm{\phih^m}{L^2}^2  +\norm{\phih^m}{H^1}^2 \right] &\le C,
	\label{LinfH1phi-discrete}
	\\
\tau \sum_{m=1}^M\bigg[ \norm{\nabla\muh^m}{L^2}^2 +  \norm{\nabla\buh^m}{L^2}^2 + \norm{\buh^m}{L^2}^2 \bigg] &\leq C , 
	\label{sum-mu-u-discrete}
	\\
\sum_{m=1}^M \bigg[ \norm{\nabla\left(\phih^m-\phih^{m-1}\right)}{L^2}^2 + \norm{ \phih^m-\phih^{m-1}}{L^2}^2 +  \norm{ \phih^m( \phih^m-\phih^{m-1})}{L^2}^2
	\nonumber
	\\
+ \norm{( \phih^m)^2-(\phih^{m-1})^2}{L^2}^2 + \norm{\phih^m-\phih^{m-1}}{-1,h}^2 + \norm{\buh^m - \buh^{m-1}}{L^2}^2 \bigg] &\leq C , 
	\label{sum-phi-u-discrete}
	\end{align}
for some constant $C>0$ that is independent of $h$, $\tau$, and $T$.
	\end{lem}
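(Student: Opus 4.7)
The plan is to read off every estimate as a direct consequence of the energy law \eqref{ConvSplitEnLaw} from Lemma~\ref{lem-energy-law}, exploiting the fact that each summand on its left-hand side is nonnegative and that the hypothesis $E(\buh^0,\phih^0)\le C$ makes the right-hand side bounded uniformly in $h$, $\tau$, and $T$.

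For \eqref{Linf-u-phi-discrete}, I would use the first form of the energy in \eqref{continuous-energy}, namely
\[
E(\buh^\ell,\phih^\ell)=\frac{\omega}{2\gamma}\norm{\buh^\ell}{L^2}^2+\frac{1}{4\varepsilon}\norm{(\phih^\ell)^2-1}{L^2}^2+\frac{\varepsilon}{2}\norm{\nabla\phih^\ell}{L^2}^2+\frac{\theta}{2}\norm{\phih^\ell-\avephio}{-1,h}^2,
\]
each term of which is nonnegative. Dropping the other nonnegative contributions on the left of \eqref{ConvSplitEnLaw} gives $E(\buh^\ell,\phih^\ell)\le E(\buh^0,\phih^0)\le C$, and hence each of the four terms above is individually bounded by $C$. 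In the edge case $\theta=0$, the $(-1,h)$ bound on $\phih^m-\avephio$ does not come from $E$, but follows from $\norm{\phih^m-\avephio}{-1,h}\le C\norm{\phih^m-\avephio}{L^2}$ (Lemma~\ref{lem-negative-norm-discrete}) together with the Poincar\'e inequality applied to the mean-zero function $\phih^m-\avephio$ and the already-obtained gradient bound.

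For \eqref{LinfH1phi-discrete}, I would write $\norm{\phih^m}{L^4}^4=\norm{(\phih^m)^2}{L^2}^2\le 2\norm{(\phih^m)^2-1}{L^2}^2+2|\Omega|\le C$ using the triangle inequality and step~1. Then H\"older gives $\norm{\phih^m}{L^2}^2\le|\Omega|^{1/2}\norm{\phih^m}{L^4}^2\le C$, and combining with the gradient bound yields $\norm{\phih^m}{H^1}^2\le C$. Estimate \eqref{sum-mu-u-discrete} is read off verbatim from the terms $\tau\varepsilon\sum\norm{\nabla\muh^m}{L^2}^2$, $\tau\frac{\lambda}{\gamma}\sum\norm{\nabla\buh^m}{L^2}^2$, and $\tau\frac{\eta}{\gamma}\sum\norm{\buh^m}{L^2}^2$ on the left of \eqref{ConvSplitEnLaw}; these are each $\le C$. (If $\eta=0$, the $L^2$-in-time bound on $\buh^m$ comes from Poincar\'e applied to the $H^1_0$ function $\buh^m$ combined with the gradient bound.)

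For \eqref{sum-phi-u-discrete}, I would use the identity $\tau^2\norm{\dtau v^m}{X}^2=\norm{v^m-v^{m-1}}{X}^2$ and read the six terms off the $\tau^2$ block in \eqref{ConvSplitEnLaw}: the $\frac{\varepsilon}{2}$, $\frac{1}{4\varepsilon}$, $\frac{1}{2\varepsilon}$, $\frac{1}{2\varepsilon}$, $\frac{1}{2\gamma}$, and $\frac{\theta}{2}$ coefficients are all strictly positive and independent of $h,\tau$, so each telescoped sum is bounded by $C$. The same $\theta=0$ comment as before handles $\sum\norm{\phih^m-\phih^{m-1}}{-1,h}^2$ via Lemma~\ref{lem-negative-norm-discrete} and the already-proved bound on $\sum\norm{\phih^m-\phih^{m-1}}{L^2}^2$ (the latter sitting in the $\frac{1}{2\varepsilon}$ slot of \eqref{ConvSplitEnLaw}). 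There is no real obstacle here beyond careful bookkeeping of the constants and the $\theta=0$ edge case; the whole lemma is an unpacking of Lemma~\ref{lem-energy-law}.
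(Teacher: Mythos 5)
Your proposal is correct and follows exactly the route the paper intends: the paper offers no written proof of this lemma, stating only that it is an immediate consequence of the discrete energy law \eqref{ConvSplitEnLaw}, and your argument is precisely that unpacking (nonnegativity of every summand, the identity $\tau^2\norm{\dtau v^m}{X}^2=\norm{v^m-v^{m-1}}{X}^2$, and the elementary $L^4$/$L^2$ bounds). Your explicit treatment of the degenerate cases $\theta=0$ and $\eta=0$ via the Poincar\'e-type estimates of Lemma~\ref{lem-negative-norm-discrete} and for ${\bf H}_0^1$ functions is a detail the paper leaves implicit, and it is handled correctly.
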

	
We are able to prove the next set of \emph{a priori} stability estimates without any restrictions of $h$ and $\tau$.  Before we begin, we will need the discrete  Laplacian, $\Delta_h: S_h \to \Soh$, which is defined as follows:  for any $v_h\in S_h$, $\Delta_h v_h\in \Soh$ denotes the unique solution to the problem
	\begin{equation}
\iprd{\Delta_h v_h}{\chi} = -\aiprd{v_h}{\chi},  \quad\forall \, \,\chi\in S_h.
	\label{discrete-laplacian}
	\end{equation}
In particular, setting $\chi = \Delta_h v_h$ in \eqref{discrete-laplacian}, we obtain  
	\begin{displaymath}
\norm{\Delta_h v_h}{L^2}^2 = -\aiprd{v_h}{ \Delta_hv_h} .
	\end{displaymath}

	\begin{lem}
	\label{lem-improved-a-priori-stabilities}
Let $(\phih^m, \muh^m, {\bf u}_h^m) \in S_h\times S_h\times {\bf X}_h$ be the unique solution of  \eqref{scheme-a}--\eqref{scheme-e}, with the other variables regarded as auxiliary.  Suppose that $E\left(\buh^0,\phih^0\right) < C$ independent of $h$ and that $T\ge 1$ (for simplicity). The following estimates hold for any $h,\, \tau >0$:
	\begin{equation}
\tau \sum_{m=1}^M \bigg[ \norm{\dtau \phih^m}{H^{-1}}^2 +\norm{\dtau \phih^m}{-1,h}^2+\norm{ \Delta_h \phih^m}{L^2}^2 + \norm{\muh^m}{L^2}^2 + \norm{\phih^m}{L^\infty}^{\frac{4(6-d)}{d}} \bigg] \le TC,
	\label{sum-3D-good}
	\end{equation}
for some constant $C>0$ that is independent of $h$, $\tau$, and $T$.
	\end{lem}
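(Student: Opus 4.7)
The plan is to bootstrap from Lemma~\ref{lem-a-priori-stability-trivial} by testing each equation of the scheme with a carefully chosen discrete function, proceeding in the order $\norm{\muh^m}{L^2}$, $\norm{\Delta_h\phih^m}{L^2}$, the two time-difference norms, and finally the $L^\infty$ bound on $\phih^m$, since each estimate feeds into the next.

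For $\muh^m$, I would first bound its spatial average by testing \eqref{scheme-b} with $\psi\equiv 1$: since $\aiprd{\phih^m}{1} = 0$ and $\xi_h^m \in\Soh$, the mean $\overline{\muh^m}$ is determined entirely by $\varepsilon^{-1}\iprd{(\phih^m)^3 - \phih^{m-1}}{1}$, which is $O(1)$ because $H^1\hookrightarrow L^3$ in $d\le 3$ and $\norm{\phih^m}{H^1}\le C$ from~\eqref{LinfH1phi-discrete}. Splitting $\norm{\muh^m}{L^2}^2 \le C|\overline{\muh^m}|^2 + C\norm{\nabla\muh^m}{L^2}^2$ via Poincar\'e on the zero-mean part and invoking~\eqref{sum-mu-u-discrete} yields the desired $CT$ bound. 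Next, testing \eqref{scheme-b} with $\psi = \Delta_h\phih^m\in\Soh$ produces $-\varepsilon\norm{\Delta_h\phih^m}{L^2}^2$ from the diffusion term by the definition of $\Delta_h$, while the remaining terms are estimated---using $H^1\hookrightarrow L^6$ for the cubic, the fact that $\Delta_h\phih^m$ has zero mean to drop $\overline{\muh^m}$, and a quick bound $\norm{\xi_h^m}{L^2}\le C$ obtained by testing \eqref{scheme-c} with $\zeta = \xi_h^m$---by $C\norm{\Delta_h\phih^m}{L^2}(1 + \norm{\nabla\muh^m}{L^2})$. This yields $\norm{\Delta_h\phih^m}{L^2}\le C(1 + \norm{\nabla\muh^m}{L^2})$; squaring and summing against~\eqref{sum-mu-u-discrete} delivers the $CT$ bound.

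For the negative-norm time differences, I would rearrange \eqref{scheme-a} to write $\iprd{\dtau\phih^m}{\nu} = -\varepsilon\aiprd{\muh^m}{\nu} - \bform{\phih^{m-1}}{\buh^m}{\nu}$ for $\nu\in\Soh$, and control the convective term via H\"older and Sobolev as $|\bform{\phih^{m-1}}{\buh^m}{\nu}|\le \norm{\nabla\phih^{m-1}}{L^2}\norm{\buh^m}{L^3}\norm{\nu}{L^6}\le C\norm{\nabla\buh^m}{L^2}\norm{\nabla\nu}{L^2}$, using the uniform $H^1$ bound on $\phih^{m-1}$ together with the Sobolev embeddings $H^1_0\hookrightarrow L^3$ and $\mathring{H}^1\hookrightarrow L^6$ valid for $d\le 3$. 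Taking the supremum over $\Soh$ gives $\norm{\dtau\phih^m}{-1,h}\le C(\norm{\nabla\muh^m}{L^2}+\norm{\nabla\buh^m}{L^2})$, which sums to a constant via~\eqref{sum-mu-u-discrete}. Passage to the continuous $H^{-1}$ norm uses the decomposition $\chi = R_h\chi + (\chi - R_h\chi)$ for $\chi\in\mathring{H}^1$: the first piece lies in $\Soh$ (since $R_h$ preserves integrals), contributing $C\norm{\dtau\phih^m}{-1,h}\norm{\nabla\chi}{L^2}$, while the second piece contributes $Ch\norm{\dtau\phih^m}{L^2}\norm{\nabla\chi}{L^2}$, which is absorbed via the discrete inverse estimate $\norm{\dtau\phih^m}{L^2}\le Ch^{-1}\norm{\dtau\phih^m}{-1,h}$ from Lemma~\ref{lem-negative-norm-discrete}. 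Hence $\norm{\dtau\phih^m}{H^{-1}}\le C\norm{\dtau\phih^m}{-1,h}$ and the same sum bound transfers.

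Finally, the $L^\infty$ bound calls for a Gagliardo--Nirenberg interpolation at the discrete level. Choosing the GN exponents so that the power on $\Delta v$ is exactly $2$ gives the continuous inequality $\norm{v}{L^\infty}^{\frac{4(6-d)}{d}}\le C\norm{v}{L^6}^{\frac{6(4-d)}{d}}\norm{\Delta v}{L^2}^2$ for $d=2,3$; combined with $H^1\hookrightarrow L^6$ and $\norm{\phih^m}{H^1}\le C$, this reduces to $\norm{\phih^m}{L^\infty}^{\frac{4(6-d)}{d}}\le C\norm{\Delta_h\phih^m}{L^2}^2$ (with $\Delta$ replaced by the discrete Laplacian $\Delta_h$), after which one sums against the Laplacian bound from above. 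Rigorously establishing the discrete analogue of the GN inequality on a shape-regular, quasi-uniform triangulation is the main technical obstacle; the standard route is to lift $\phih^m$ to an auxiliary $H^2$ function solving an elliptic problem driven by $\Delta_h\phih^m$, apply continuous GN to the lift, and absorb the finite-element--lift difference using Ritz approximation together with an inverse estimate, in the spirit of~\cite{feng07b} and related works.
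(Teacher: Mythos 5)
Your proposal is correct and follows essentially the same strategy as the paper: test \eqref{scheme-b} against $\Delta_h\phih^m$ and against constants/$\muh^m$ itself, use \eqref{scheme-a} to control the time-difference negative norms, and invoke a discrete Gagliardo--Nirenberg inequality (which the paper also states without proof) for the $L^\infty$ bound. The only minor deviations are cosmetic: you bound $\norm{\muh^m}{L^2}$ via its mean plus Poincar\'{e} rather than testing with $\muh^m$, and you derive the $\norm{\cdot}{-1,h}$ bound first and pass to $H^{-1}$ via the Ritz projection and the inverse estimate, whereas the paper estimates $\norm{\dtau\phih^m}{H^{-1}}$ directly through the $L^2$ projection and then uses the elementary inequality $\norm{\nu}{-1,h}\le\norm{\nu}{H^{-1}}$ --- both routes are valid.
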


	\begin{proof}
Let $\mathcal{Q}_h: L^2(\Omega) \to S_h$ be the $L^2$ projection, \emph{i.e.}, $\iprd{\mathcal{Q}_h \nu-\nu}{\chi}=0$, for all $\chi\in S_h$. Suppose $\nu\in\mathring{H}^1(\Omega)$. Then, using \eqref{Linf-u-phi-discrete} and Sobolev embeddings,
	\begin{align}
\iprd{\dtau\phih^m}{\nu}&= \iprd{\dtau\phih^m}{\mathcal{Q}_h\nu} 
	\\
&=-\varepsilon \ \bigl(\nabla \mu_h^m,\nabla \mathcal{Q}_h\nu \bigr) 
- \bform{\phih^{m-1}}{\buh^m}{\mathcal{Q}_h\nu} 
	\\
&\leq \varepsilon\norm{\nabla\mu_h^m}{L^2} \norm{\nabla \mathcal{Q}_h\nu}{L^2} + \norm{\nabla\phih^{m-1}}{L^2}\norm{\buh^m}{L^4} \norm{ \mathcal{Q}_h\nu}{L^4}
	\\
&\leq C\left[\varepsilon\norm{\nabla\mu_h^m}{L^2} + \norm{\buh^m}{H^1} \right]\, \norm{\nabla \mathcal{Q}_h \nu}{L^2}
	\\
&\leq C\left[\varepsilon\norm{\nabla\muh^m}{L^2} + \norm{\buh^m}{H^1} \right]\, \norm{\nabla \nu}{L^2},
	\end{align}	
where we used the $H^1$ stability of the $L^2$ projection in the last step. Applying $\tau\sum_{m=1}^M$ gives (\ref{sum-3D-good}.1) -- which, in our notation, is the bound on the first term of the left side of \eqref{sum-3D-good}. The estimate (\ref{sum-3D-good}.2) follows from the inequality $\norm{\nu}{-1,h} \le \norm{\nu}{H^{-1}}$, which holds for all $\nu\in L^2(\Omega)$.
	
Setting $\psi_h= \Delta_h \phih^m$ in \eqref{scheme-b} and using the  definition of $\Delta_h\phih^m$, we get
	\begin{align*}
\varepsilon \norm{ \Delta_h \phih^m}{L^2}^2  &= -  \varepsilon \,\aiprd{\phih^m}{ \Delta_h \phih^m}
	\\
&=-\iprd{\muh^m}{\Delta_h \phih^m} + \varepsilon^{-1}\iprd{\left(\phih^m\right)^3-\phih^{m-1}}{\Delta_h \phih^m} +\iprd{\xi_h^m}{\Delta_h \phih^m} 
	\\
& \le\aiprd{ \muh^m}{\phih^m } + \varepsilon^{-1}\left(\frac{ \varepsilon^2}{2} \norm{ \Delta_h \phih^m}{L^2}^2  + \frac{1}{2 \varepsilon^2} \norm{\left(\phih^m\right)^3-\phih^{m-1}}{L^2}^2 \right)-\aiprd{\xi_h^m}{\phih^m}
	\\
&\leq \frac12\norm{\nabla\muh^m}{L^2}^2 +\frac12 \norm{\nabla\phih^m}{L^2}^2  +\frac{ \varepsilon}2 \norm{ \Delta_h \phih^m}{L^2}^2 +C\norm{\left(\phih^m\right)^3-\phih^{m-1}}{L^2}^2 - \theta \,\iprd{\phih^m-\avephio}{\phih^m}
	\\
&\leq \frac12\norm{\nabla\muh^m}{L^2}^2 + C \norm{\nabla \phih^m}{L^2}^2  +\frac{ \varepsilon}2 \norm{ \Delta_h \phih^m}{L^2}^2 +C\norm{\left(\phih^m\right)^3-\phih^{m-1}}{L^2}^2 + C \norm{\phih^m - \avephio}{-1,h}^2.
	\end{align*}
Hence,
	\begin{equation}
\varepsilon \norm{ \Delta_h \phih^m}{L^2}^2 \leq \norm{\nabla \muh^m}{L^2}^2  + C \norm{\nabla \phih^m}{L^2}^2 + C\norm{\left(\phih^m\right)^3-\phih^{m-1}}{L^2}^2 + C \norm{\phih^m - \avephio}{-1,h}^2.
	\label{discrete-laplacian-phi-middle}
	\end{equation}
Now using \eqref{LinfH1phi-discrete}, we have
	\begin{align} 
\norm{(\phih^m)^3-\phih^{m-1}}{L^2}^2 &\le 2\left(\norm{\phih^m}{L^6}^6 + \norm{\phih^{m-1}}{L^2}^2\right) 
	\nonumber
	\\
& \le C \norm{\phih^m}{H^1}^6 +C  
	\nonumber
	\\
& \le C,
	\label{nonlinear-control}
	\end{align}
where we used the embedding $H^1(\Omega) \hookrightarrow L^6(\Omega)$, for $d = 2, 3$. Putting the last two inequalities together, we have 
	\begin{equation}
\varepsilon \norm{ \Delta_h \phih^m}{L^2}^2 \leq \norm{\nabla \muh^m}{L^2}^2  +C.
	\end{equation}
Applying $\tau\sum_{m=1}^M$, estimate (\ref{sum-3D-good}.3) now follows from (\ref{sum-mu-u-discrete}.1).

Now, take $\psi = \muh^m$ in \eqref{scheme-b}.  Then, using \eqref{Linf-u-phi-discrete} and \eqref{nonlinear-control}, we have
	\begin{eqnarray}
\norm{\muh^m}{L^2}^2 &\le& \varepsilon^{-1}	\norm{\left(\phih^m\right)^3 - \phih^{m-1}}{L^2} \norm{\muh^m}{L^2} +\varepsilon\norm{\nabla\phih^m}{L^2}\norm{\nabla\muh^m}{L^2} +  \norm{\xi_h^m}{L^2}\norm{\mu_h^m}{L^2}
	\nonumber
	\\
&\le& \frac{1}{\varepsilon^2}\norm{\left(\phih^m\right)^3 - \phih^{m-1}}{L^2}^2 +\frac{1}{4}\norm{\muh^m}{L^2}^2 +\frac{\varepsilon}{2}\norm{\nabla\phih^m}{L^2}^2 + \frac{\varepsilon}{2}\norm{\nabla\muh^m}{L^2}^2 
	\nonumber
	\\
&&+ C\norm{\nabla\xi_h^m}{L^2}^2 + \frac{1}{4} \norm{\muh^m}{L^2}
	\nonumber
	\\
&\le&  C +\frac{1}{2}\norm{\muh^m}{L^2}^2  + \frac{\varepsilon}{2} \norm{\nabla\muh^m}{L^2}^2 + C\norm{\nabla\xi_h^m}{L^2}^2 
	\nonumber
	\\
&\le&  C +\frac{1}{2}\norm{\muh^m}{L^2}^2  + \frac{\varepsilon}{2} \norm{\nabla\muh^m}{L^2}^2 + C \norm{\phih^m-\avephio}{-1,h}^2
	\nonumber
	\\
&\le&  C +\frac{1}{2}\norm{\muh^m}{L^2}^2  + \frac{\varepsilon}{2} \norm{\nabla\muh^m}{L^2}^2 .
	\nonumber
	\end{eqnarray}
Hence
	\begin{equation}
\norm{\muh^m}{L^2}^2 \le C + \varepsilon\norm{\nabla\muh^m}{L^2}^2.
	\end{equation}
Applying $\tau\sum_{m=1}^M$, estimate (\ref{sum-3D-good}.4) now follows from (\ref{sum-mu-u-discrete}.1).

To prove estimate (\ref{sum-3D-good}.5), we use the discrete Gagliardo-Nirenberg inequality:
	\begin{equation}
\|\phih^m\|_{L^\infty} \leq C\|\Delta_h \phih^m\|_{L^2}^{\frac{d}{2(6-d)}}
\,\|\phih^m\|_{L^6}^{\frac{3(4-d)}{2(6-d)}} + C\|\phih^m\|_{L^6} \qquad (d=2,3) .
	\label{infinity-bound}
	\end{equation}
Applying $\tau\sum_{m=1}^M$ and using $H^1(\Omega) \hookrightarrow L^6(\Omega)$, (\ref{LinfH1phi-discrete}.3) and  (\ref{sum-3D-good}.3), estimate (\ref{sum-3D-good}.5) follows.  
	\end{proof}
	
	\begin{lem}
	\label{lem-a-priori-stability}
Let $(\phih^m, \muh^m, {\bf u}_h^m) \in S_h\times S_h\times {\bf X}_h$ be the unique solution of  \eqref{scheme-a}--\eqref{scheme-e}, with the other variables regarded as auxiliary.  Suppose that $E\left(\buh^0,\phih^0\right), \norm{\muh^0}{L^2}^2 < C$ independent of $h$,  where $\muh^0$ is defined below in \eqref{initial-chem-pot}, $d=2,3$, and that $T\ge 1$ (for simplicity). The following estimates hold for any $h,\, \tau >0$:
	\begin{align}
\tau \sum_{m=1}^M \norm{\dtau\phih^m}{L^2}^2 & \le CT ,
	\label{sum-dtau-phi}
	\\
\max_{1\le m\le M} \bigg[ \norm{\muh^m}{L^2}^2 + \norm{\Delta_h \phih^m}{L^2}^2  + \norm{\phih^m}{L^\infty}^{\frac{4(6-d)}{d}} \bigg] &\le CT,
	\label{Linf-mu-phi} 
	\end{align}
for some constant $C>0$ that is independent of $h$, $\tau$, and $T$.
	\end{lem}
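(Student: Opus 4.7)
The plan is to derive a telescoping-in-time identity for $\norm{\nabla\muh^m}{L^2}^2$ by combining the $\dtau$-difference of \eqref{scheme-b} tested against $\dtau\phih^m$ with \eqref{scheme-a} tested against $\dtau\muh^m$, and then to bootstrap to the remaining quantities using the discrete Gagliardo--Nirenberg inequality \eqref{infinity-bound} together with the negative-norm interpolation $\norm{v}{L^2}^2 \le \norm{v}{-1,h}\norm{\nabla v}{L^2}$ on $\Soh$ (a consequence of \eqref{plus-1-minus-1-estimate}).

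For the key identity: for $m\ge 2$, subtract \eqref{scheme-b} at time $m-1$ from time $m$, divide by $\tau$, and test with $\psi=\dtau\phih^m\in\Soh$; separately, test \eqref{scheme-a} at time $m$ with $\nu=\dtau\muh^m\in S_h$. Adding and using the standard identity $\aiprd{u}{\dtau u} = \tfrac{1}{2}\dtau\norm{\nabla u}{L^2}^2 + \tfrac{\tau}{2}\norm{\nabla\dtau u}{L^2}^2$, the nonnegativity $\bigl((\phih^m)^3-(\phih^{m-1})^3\bigr)\bigl(\phih^m-\phih^{m-1}\bigr)\ge 0$, and the identity $(\dtau\xi_h^m,\dtau\phih^m) = \theta\norm{\dtau\phih^m}{-1,h}^2\ge 0$ (obtained by $\dtau$-differencing \eqref{scheme-c} and testing with $\mathsf{T}_h(\dtau\phih^m)$), I arrive at
\begin{equation*}
\tfrac{\varepsilon}{2}\dtau\norm{\nabla\muh^m}{L^2}^2 + \tfrac{\varepsilon\tau}{2}\norm{\nabla\dtau\muh^m}{L^2}^2 + \varepsilon\norm{\nabla\dtau\phih^m}{L^2}^2 + \theta\norm{\dtau\phih^m}{-1,h}^2 + (\text{nonneg}) = \varepsilon^{-1}(\dtau\phih^{m-1},\dtau\phih^m) - \bform{\phih^{m-1}}{\buh^m}{\dtau\muh^m}.
\end{equation*}
Multiplying by $\tau$ and summing $m=2,\ldots,\ell$, the first LHS term telescopes to $\tfrac{\varepsilon}{2}(\norm{\nabla\muh^\ell}{L^2}^2 - \norm{\nabla\muh^1}{L^2}^2)$. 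The RHS term $\tau\varepsilon^{-1}\sum(\dtau\phih^{m-1},\dtau\phih^m)$ is controlled by the negative-norm interpolation, the bound $\tau\sum\norm{\dtau\phih^m}{-1,h}^2\le C$ from Lemma~\ref{lem-improved-a-priori-stabilities}, and absorption of a small multiple of $\tau\sum\norm{\nabla\dtau\phih^m}{L^2}^2$ back into the LHS dissipation. For the fluid-coupling term, since $\buh^m$ is pointwise divergence-free (as $\nabla\cdot\buh^m\in S_h$ has zero mean by the divergence theorem and \eqref{scheme-e}) and vanishes on $\partial\Omega$, integration by parts produces $-\tau\sum\bform{\phih^{m-1}}{\buh^m}{\dtau\muh^m} = \sum(\phih^{m-1}\buh^m,\nabla(\muh^m-\muh^{m-1}))$; I then apply summation by parts in $m$ to shift the discrete time-difference off $\muh$, leaving a boundary contribution $(\phih^{\ell-1}\buh^\ell,\nabla\muh^\ell)$ together with interior sums coupling $\dtau\phih^m$ and $\dtau\buh^m$ with $\nabla\muh^m$.

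The main obstacle is the Gr\"onwall closure of this estimate. The boundary term is handled using Young's inequality and the pointwise bound $\norm{\phih^{\ell-1}}{L^\infty}^2 \le C\bigl(1 + \norm{\Delta_h\phih^{\ell-1}}{L^2}^{d/(6-d)}\bigr) \le C\bigl(1 + \norm{\nabla\muh^{\ell-1}}{L^2}\bigr)$ (the first inequality from \eqref{infinity-bound}; the second by testing \eqref{scheme-b} with $-\Delta_h\phih^{\ell-1}$, using Poincar\'e together with $\norm{\phih}{L^4}\le C$ and the $\norm{\xi_h}{L^2}\le C$ inherited from \eqref{scheme-c}): after Young, this yields $\tfrac{\varepsilon}{8}\norm{\nabla\muh^\ell}{L^2}^2$ (absorbed into LHS), a piece controlled by $\tfrac{\varepsilon}{8}\norm{\nabla\muh^{\ell-1}}{L^2}^2$, and an additive constant. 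The interior sums absorb small multiples of $\varepsilon\tau\sum\norm{\nabla\dtau\phih^m}{L^2}^2$ and $\varepsilon\tau^2\sum\norm{\nabla\dtau\muh^m}{L^2}^2$ from the LHS and leave bounded contributions via the $\tau$-weighted estimates of Lemma~\ref{lem-improved-a-priori-stabilities}. Taking the maximum over $\ell$ and rearranging closes the estimate to give $\max_\ell\norm{\nabla\muh^\ell}{L^2}^2 \le CT$ as well as $\tau\sum\norm{\nabla\dtau\phih^m}{L^2}^2 \le CT$.

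The remaining conclusions bootstrap cleanly. The bound on $\max\norm{\muh^m}{L^2}^2$ follows from Poincar\'e together with $|\overline{\muh^m}|\le C$ (from \eqref{chem-pot-average} and the $L^4$-bound on $\phih^m$). The bound on $\max\norm{\Delta_h\phih^m}{L^2}^2$ follows from testing \eqref{scheme-b} with $-\Delta_h\phih^m$ and Cauchy--Schwarz/Young using the just-obtained bound on $\norm{\muh^m}{L^2}$. The $L^\infty$-bound on $\phih^m$ then comes directly from \eqref{infinity-bound}. Finally, \eqref{sum-dtau-phi} follows from the interpolation $\norm{\dtau\phih^m}{L^2}^2 \le \norm{\dtau\phih^m}{-1,h}\norm{\nabla\dtau\phih^m}{L^2}$, Cauchy--Schwarz in time, $\tau\sum\norm{\dtau\phih^m}{-1,h}^2\le C$ from Lemma~\ref{lem-improved-a-priori-stabilities}, and $\tau\sum\norm{\nabla\dtau\phih^m}{L^2}^2\le CT$ from Step 2.
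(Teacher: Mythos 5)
Your overall strategy diverges from the paper's at the crucial step: the paper differences \eqref{scheme-b} in time and tests the result with $\muh^m$ while testing \eqref{scheme-a} with $\nu=-\tau\dtau\phih^m$, so that the quantity that telescopes is $\norm{\muh^m}{L^2}^2$, the dissipation produced is $\tau\norm{\dtau\phih^m}{L^2}^2$, and --- decisively --- the velocity coupling appears as $-\tau\,\bform{\phih^{m-1}}{\buh^m}{\dtau\phih^m}$, with the time difference sitting on $\phi$. That term is then bounded by $\frac{\tau}{4}\norm{\dtau\phih^m}{L^2}^2 + C\tau\norm{\nabla\buh^m}{L^2}^2(1+\norm{\Delta_h\phih^{m-1}}{L^2}^2)$ and closed by discrete Gronwall using $\tau\sum_m\norm{\nabla\buh^m}{L^2}^2\le C$ and $\norm{\Delta_h\phih^{m-1}}{L^2}^2\le C\norm{\muh^{m-1}}{L^2}^2+C$. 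Your choice of test functions ($\psi=\dtau\phih^m$ in the differenced equation, $\nu=\dtau\muh^m$ in \eqref{scheme-a}) instead telescopes $\norm{\nabla\muh^m}{L^2}^2$ and puts the time difference on $\mu$ in the coupling term $\bform{\phih^{m-1}}{\buh^m}{\dtau\muh^m}$, and this is where the argument breaks.

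Concretely, three gaps. First, after multiplying by $\tau$ and summing by parts in $m$, your boundary term is of the form $\iprd{\nabla\phih^{\ell-1}\cdot\buh^\ell}{\muh^\ell}$ (or its integrated-by-parts variant), which carries a factor $\norm{\buh^\ell}{L^4}$. The available stabilities give only $\max_m\norm{\buh^m}{L^2}\le C$ and $\tau\sum_m\norm{\nabla\buh^m}{L^2}^2\le C$, so $\norm{\buh^\ell}{L^4}\lesssim\norm{\nabla\buh^\ell}{L^2}^{d/4}\lesssim\tau^{-d/8}$; there is no uniform-in-$\ell$ bound, and your claimed Young splitting into $\frac{\varepsilon}{8}\norm{\nabla\muh^\ell}{L^2}^2 + \frac{\varepsilon}{8}\norm{\nabla\muh^{\ell-1}}{L^2}^2 + C$ silently discards this unbounded factor. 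The interior sums after summation by parts are likewise not $\tau$-weighted, so the square-summable increments from \eqref{sum-phi-u-discrete} paired with $\norm{\buh^m}{L^4}$ or $\norm{\muh^{m-1}}{H^1}$ do not yield an $O(T)$ bound. Second, your integration by parts in space relies on $\nabla\cdot\buh^m\equiv 0$, which is false for Taylor--Hood: \eqref{scheme-e} only enforces $L^2$-orthogonality of $\nabla\cdot\buh^m$ to $\Soh$, and $\nabla\cdot\buh^m$ is a discontinuous piecewise polynomial not lying in $S_h$ (the paper explicitly notes ${\bf V}_h\not\subset{\bf V}$). Third, telescoping $\norm{\nabla\muh^m}{L^2}^2$ requires a bound on $\norm{\nabla\muh^0}{L^2}$ (or $\norm{\nabla\muh^1}{L^2}$), whereas the hypothesis supplies only $\norm{\muh^0}{L^2}^2<C$; from \eqref{sum-mu-u-discrete} one gets merely $\norm{\nabla\muh^1}{L^2}^2\le C/\tau$. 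Your closing bootstrap (the $\max\norm{\muh^m}{L^2}$, $\max\norm{\Delta_h\phih^m}{L^2}$, $L^\infty$, and \eqref{sum-dtau-phi} steps) is sound in outline, but it rests on the unproven $\max_\ell\norm{\nabla\muh^\ell}{L^2}^2\le CT$. The fix is to switch to the paper's pairing so that the dangerous trilinear term lands on $\dtau\phih^m$ rather than $\dtau\muh^m$.
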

	
	\begin{proof}
	We prove (\ref{sum-dtau-phi}) and (\ref{Linf-mu-phi}.1) together.  To do so, we first define $\muh^0$ via
	\begin{equation}
	\label{initial-chem-pot}
\iprd{\muh^0}{\psi} := \varepsilon \,\aiprd{\phih^0}{\psi} + \varepsilon^{-1}\iprd{\left(\phih^0\right)^3 -\phih^0 }{\psi}  + \theta \,\iprd{\mathsf{T}_h\left(\phih^0-\avephio\right)}{\psi},
	\end{equation}
for all $\psi \in S_h$, and
	\begin{equation}
\dtau \phih^0 :\equiv 0\in S_h.
	\end{equation}	
Now, we subtract \eqref{scheme-b} from itself at consecutive time steps to obtain
	\begin{eqnarray}
\tau\iprd{\dtau\muh^m}{\psi} &=& \tau\varepsilon \,\aiprd{\dtau\phih^m}{\psi} + \varepsilon^{-1}\iprd{\left(\phih^m\right)^3-\left(\phih^{m-1}\right)^3}{\psi} 
	\nonumber
	\\
&& -\tau\varepsilon^{-1}\iprd{\dtau \phih^{m-1}}{\psi}  + \theta\tau \,\iprd{\mathsf{T}_h\left(\dtau\phih^m\right)}{\psi} ,	
	\label{scheme-b-staggered}
	\end{eqnarray}
for all $\psi \in S_h$, which is well-defined for all $1\le m \le M$.  Taking $\psi = \muh^m$ in \eqref{scheme-b-staggered} and $\nu = -\tau\dtau\phih^m$ in \eqref{scheme-a} and adding the results yields 
	\begin{eqnarray}
\tau \,\iprd{\dtau\muh^m}{\muh^m} +\tau \norm{\dtau\phih^m}{L^2}^2  &=& \tau \varepsilon^{-1}\iprd{\dtau\phih^m\left\{ \left(\phih^m\right)^2 +\phih^m\phih^{m-1} + \left(\phih^{m-1}\right)^2\right\}}{\muh^m}  
	\nonumber
	\\
&&- \tau \varepsilon^{-1} \iprd{\dtau \phih^{m-1}}{\muh^m} + \theta\tau \,\iprd{\mathsf{T}_h\left(\dtau\phih^m\right)}{\muh^m - \overline{\mu_h^m}}
	\nonumber
	\\
&&- \tau \,\bform{\phih^{m-1}}{\buh^m}{\dtau\phih^m}
	\nonumber
	\\
&\le& \tau\varepsilon^{-1} \norm{\left(\phih^m\right)^2 + \phih^m\phih^{m-1} + \left(\phih^{m-1}\right)^2 }{L^3}\norm{\muh^m}{L^6} \norm{\dtau\phih^m}{L^2}
	\nonumber
	\\
&&+ \tau \varepsilon^{-1} \norm{\nabla\muh^m}{L^2} \norm{\dtau\phih^{m-1}}{-1,h}
	\nonumber
	\\
&& + \theta\tau \norm{\nabla{\mathsf{T}_h\left(\dtau\phih^m\right)}}{L^2} \norm{\muh^m - \overline{\mu_h^m}}{-1,h}
	\nonumber
	\\
&&- \tau \,\bform{\phih^{m-1}}{\buh^m}{\dtau\phih^m}
	\nonumber
	\\
&\le&  C \tau \norm{\left(\phih^m\right)^2 +\phih^m\phih^{m-1} + \left(\phih^{m-1}\right)^2 }{L^3}^2\norm{\muh^m}{H^1}^2  +\frac{\tau}{4} \norm{\dtau\phih^m}{L^2}^2
	\nonumber
	\\
& & + C\tau \norm{\nabla\muh^m}{L^2}^2 +  C\tau \norm{\dtau\phih^{m-1}}{-1,h}^2
	\nonumber
	\\
&& + C\tau  \norm{\nabla{\mathsf{T}_h\left(\dtau\phih^m\right)}}{L^2}^2 + C\tau\norm{\muh^m - \overline{\mu_h^m}}{-1,h}^2
	\nonumber
	\\
&&- \tau \,\bform{\phih^{m-1}}{\buh^m}{\dtau\phih^m}
	\nonumber
	\\
&\le&  C \tau \left( \norm{\phih^m}{L^6}^4 +\norm{\phih^{m-1}}{L^6}^4\right)\norm{\muh^m}{H^1}^2  +\frac{\tau}{4} \norm{\dtau\phih^m}{L^2}^2
	\nonumber
	\\
& & + C\tau \norm{\nabla\muh^m}{L^2}^2 +  C\tau \norm{\dtau\phih^{m-1}}{-1,h}^2
	\nonumber
	\\
&& + C\tau  \norm{\dtau\phih^m}{-1,h}^2 + C\tau\norm{\nabla\muh^m}{L^2}^2 - \tau \,\bform{\phih^{m-1}}{\buh^m}{\dtau\phih^m}
	\nonumber
	\\
&\le&   C \tau \norm{\muh^m}{H^1}^2  +\frac{\tau}{4} \norm{\dtau\phih^m}{L^2}^2  +  C\tau \norm{\dtau\phih^{m-1}}{-1,h}^2
	\nonumber
	\\
& &   + C\tau  \norm{\dtau\phih^m}{-1,h}^2  - \tau \,\bform{\phih^{m-1}}{\buh^m}{\dtau\phih^m}
	\label{estimate-2.56}
	\end{eqnarray}
where we have used $H^1(\Omega) \hookrightarrow L^6(\Omega)$, Lemma~\ref{lem-negative-norm-discrete}, and \eqref{LinfH1phi-discrete}.  

Now we bound the trilinear form $b(\, \cdot \, , \, \cdot \, , \, \cdot \, )$. To do so, we note the discrete estimate
	\begin{equation}
\norm{\nabla \nu_h}{L^4}  \leq C\left( \norm{\nabla \nu_h}{L^2}+\norm{\Delta_h \nu_h}{L^2}\right)^{\frac{d}{4}} \norm{\nabla \nu_h}{L^2}^{\frac{4-d}{4}} \quad \forall \, \nu_h \in S_h,\quad d= 2,3,
	\label{grad-v-L4-bound}
	\end{equation}
and Ladyzhenskaya inequality
	\begin{equation}
\norm{\bu}{L^4}\le C\norm{\bu}{L^2}^{\frac{4-d}{4}}\norm{\nabla\bu}{L^2}^{\frac{d}{4}}\qquad \forall \,  \, \bu \in {\bf H}_0^1(\Omega) ,\quad d= 2,3.
	\label{Lady}
	\end{equation} 
Using Holder's inequality, \eqref{grad-v-L4-bound},  \eqref{Lady}, (\ref{Linf-u-phi-discrete}.1), and (\ref{Linf-u-phi-discrete}.2)

	\begin{eqnarray}
\left| \bform{\phih^{m-1}}{\buh^m}{\dtau\phih^m}\right| &\le & \norm{\nabla\phih^{m-1}}{L^4}\norm{{\bf u}_h^m}{L^4}\norm{\dtau\phih^m}{L^2}
	\nonumber
	\\
&\le & C \norm{\dtau\phih^m}{L^2} \norm{\nabla{\bf u}_h^m}{L^2}\left(\norm{\nabla\phih^{m-1}}{L^2}+\norm{\Delta_h\phih^{m-1}}{L^2}\right)
	\nonumber
	\\
&\le & \frac{1}{4} \norm{\dtau\phih^m}{L^2}^2 + C \norm{\nabla{\bf u}_h^m}{L^2}^2 + C \norm{\nabla{\bf u}_h^m}{L^2}^2 \norm{\Delta_h\phih^{m-1}}{L^2}^2 .
	\label{b-form-stability-estimate-a}
 	\end{eqnarray}
Setting $\psi_h= \Delta_h \phih^m$ in \eqref{scheme-b} and \eqref{initial-chem-pot} and using the  definition of $\Delta_h\phih^m$, it follows that
	\begin{equation}
\norm{ \Delta_h \phih^m}{L^2}^2 \leq C\norm{\muh^m}{L^2}^2  +C , \quad 0 \le m \le M, 
	\label{lap-phi-bounded-by-mu}
	\end{equation}
so that, for $1\le m\le M$,
	\begin{equation}
\left| \bform{\phih^{m-1}}{\buh^m}{\dtau\phih^m}\right| \le  \frac{1}{4} \norm{\dtau\phih^m}{L^2}^2 +  C \norm{\nabla{\bf u}_h^m}{L^2}^2 + C \norm{\nabla{\bf u}_h^m}{L^2}^2 \norm{\muh^{m-1}}{L^2}^2 .
	\end{equation}
 
Thus,
	\begin{eqnarray}
\tau\iprd{\dtau\muh^m}{\muh^m} +\frac{\tau}{2} \norm{\dtau\phih^m}{L^2}^2  &\le&   C\tau \norm{\muh^m}{H^1}^2 +  C\tau \norm{\dtau\phih^{m-1}}{-1,h}^2 + C\tau  \norm{\dtau\phih^m}{-1,h}^2  
	\nonumber
	\\
&& + C\tau\norm{\nabla{\bf u}_h^m}{L^2}^2 \norm{\muh^{m-1}}{L^2}^2+ C\tau \norm{\nabla{\bf u}_h^m}{L^2}^2.
	\end{eqnarray}
Applying $\sum_{m=1}^\ell$, and using \eqref{sum-mu-u-discrete}, (\ref{sum-3D-good}), $\dtau\phih^0 \equiv 0$, and the identity
	\begin{equation}
\tau\iprd{\dtau\muh^m}{\muh^m} = \iprd{\muh^m-\muh^{m-1}}{\muh^m} = \frac{1}{2}\norm{\muh^m}{L^2}^2 +\frac{1}{2}\norm{\muh^m-\muh^{m-1}}{L^2}^2-\frac{1}{2}\norm{\muh^{m-1}}{L^2}^2 ,
	\end{equation}
we conclude
	\begin{equation}
\frac{1}{2}\norm{\muh^\ell}{L^2}^2 - \frac{1}{2}\norm{\muh^0}{L^2}^2 +\frac{\tau}{2}\sum_{m=1}^\ell\norm{\dtau\phih^m}{L^2}^2 \le CT + C\tau\sum_{m=0}^{\ell-1} \norm{\nabla{\bf u}_h^{m+1}}{L^2}^2 \norm{\muh^m}{L^2}^2 .
	\end{equation}	
Since the estimate is explicit with respect to $\left\{\norm{\muh^m}{L^2}^2\right\}$ and $\tau \sum_{m=1}^M\norm{\nabla{\bf u}_h^m}{L^2}^2 \le C$, we may appeal directly to the discrete Gronwall inequality in Lemma~\ref{lem-discrete-gronwall}.  Estimates \eqref{sum-dtau-phi} and (\ref{Linf-mu-phi}.1) follow immediately. Estimate (\ref{Linf-mu-phi}.2) follows from (\ref{Linf-mu-phi}.1) and \eqref{lap-phi-bounded-by-mu}.  Estimate (\ref{Linf-mu-phi}.3) follows from \eqref{infinity-bound}, the embedding $H^1(\Omega) \hookrightarrow L^6(\Omega)$, (\ref{LinfH1phi-discrete}.3), and (\ref{Linf-mu-phi}.2). 
	\end{proof}

	\begin{rem}
The idea for controlling the time-lagged $\norm{\Delta_h\phih^{m-1}}{L^2}^2$ term in \eqref{b-form-stability-estimate-a} using the discrete Gronwall inequality was inspired by a similar technique from a recent paper by G.~Gr\"{u}n~\cite{grun13}, which deals with a different PDE system (as well as a different numerical method) from that examined here and is not concerned with error estimates.
	\end{rem}

	\section{Error Estimates for the Fully Discrete Convex Splitting Scheme}
	\label{sec-error-estimates}
	
For the error estimates that we pursue in this section, we shall assume that weak solutions have the additional regularities
	\begin{align}
\phi &\in H^2\bigl(0,T; L^2(\Omega)\bigr) \cap L^\infty\left(0,T;W^{1,6}(\Omega)\right)\cap H^1\bigl(0,T;H^{q+1}(\Omega)\bigr),
	\nonumber
	\\
\xi &\in L^2 \bigl(0,T; H^{q+1}(\Omega) \bigr),
	\nonumber
	\\
\mu &\in L^\infty\bigl(0,T;H^1(\Omega)\bigr)\cap L^2\bigl(0,T;H^{q+1}(\Omega)\bigr),
	\label{regularities}
	\\
{\bf u} &\in H^2\left(0,T;{\bf L}^2(\Omega)\right) \cap L^{\infty}\bigl(0,T; {\bf L}^4(\Omega)\bigr)\cap H^1 \bigl(0,T;{\bf H}^{q+1}(\Omega)\bigr),
	\nonumber
	\\
p &\in L^2 \bigl(0,T;H^q(\Omega)\cap L_0^2(\Omega)\bigr),
	\nonumber	
	\end{align}
where $q\ge 1$. Of course, some of these regularities are redundant because of embeddings.  For the Darcy-Stokes projection we have
	\begin{equation}
\norm{{\bf P}_h {\bf u}-{\bf u}}{H^1} + \norm{P_h p - p}{L^2} \le C h^q\left(\left|{\bf u}\right|_{H^{q+1}} + \left|p\right|_{H^q}\right) . 
	\end{equation}
	
Weak solutions $(\phi,\mu)$ to $\eqref{weak-mch-a} - \eqref{weak-mch-e}$ with the higher regularities \eqref{regularities} solve the following variational problem:
	\begin{subequations}
	\begin{align}
\iprd{\partial_t \phi}{\nu} + \varepsilon \,\aiprd{\mu}{\nu} + \bform{\phi}{{\bf u}}{\nu} &= 0  &&\qquad \forall \, \nu \in H^1(\Omega),
	\label{weak-mch-error-a} 
	\\
\iprd{\mu}{\psi}-\varepsilon \,\aiprd{\phi}{\psi} - \varepsilon^{-1}\iprd{\phi^3-\phi}{\psi}  - \iprd{\xi}{\psi}
&= 0  &&\qquad \forall \, \psi\in H^1(\Omega),
	\label{weak-mch-error-b} 
	\\
\aiprd{\xi}{\zeta}-\theta \,\iprd{\phi-\avephio}{\zeta}&= 0  &&\qquad \forall \, \zeta\in H^1(\Omega),
	\label{weak-mch-error-c}
	\\
\iprd{\partial_t{\bf u}}{{\bf v}}+ \lambda\,\aiprd{{\bf u}}{{\bf v}} + \eta \,\iprd{ {\bf u}}{ {\bf v}} -\cform{{\bf v}}{p} - \gamma \,\bform{\phi}{{\bf v}}{\mu}&= 0  &&\qquad \forall \, {\bf v}\in \bf{H}_0^1(\Omega),
	\label{weak-mch-error-d} 
	\\
\cform{{\bf u}}{q} &= 0  &&\qquad \forall \, q\in L_0^2(\Omega).
	\label{weak-mch-error-e}
	\end{align}
	\end{subequations}

Define $L_{\tau} \phi(t) := \phi(t-\tau)$, $\dtau \phi(t) :=\frac{\phi(t)-L_{\tau}\phi(t)}{\tau}$, the backward difference operator, and 
	\begin{eqnarray}
	\label{eq:five}
\eAphi := \phi-R_h \phi , \quad \eAmu := \mu -R_h \mu, \quad \eAu := {\bf u} - {\bf P}_h {\bf u} , \quad  \eAxi := \xi - R_h \xi ,
	\\
\sigphi := \dtau R_h \phi - \partial_t\phi, \quad \sigu := \dtau {\bf P}_h {\bf u} - \partial_t {\bf u}.
	\end{eqnarray}
Then, for all $\nu,\psi, \zeta \in S_h, {\bf v} \in {\bf X}_h,$ and $q \in \Soh$,
	\begin{subequations}
	\begin{align}
\iprd{\dtau R_h\phi}{\nu} + \varepsilon \, \aiprd{R_h\mu}{\nu} = & \iprd{\sigphi}{\nu} - \bform{\phi}{{\bf u}}{\nu},
	\label{weak-mch-error3-a} 
	\\
\varepsilon \, \aiprd{R_h\phi}{\psi}  - \iprd{R_h\mu}{\psi} + \iprd{R_h\xi}{\psi} = & \iprd{\eAmu}{\psi}  - \varepsilon^{-1}\iprd{\phi^3-L_{\tau}\phi}{\psi}
	\nonumber
	\\
& + \frac{\tau}{\varepsilon}\iprd{\dtau \phi}{\psi} - \iprd{\eAxi}{\psi},
	\label{weak-mch-error3-b} 
	\\
\aiprd{R_h \xi}{\zeta}-\theta \,\iprd{R_h \phi-\avephio}{\zeta} = & \, \theta \, \iprd{\eAphi}{\zeta},
	\label{weak-mch-error3-c}
	\\
\iprd{\dtau {\bf P}_h{\bf u}}{{\bf v}} + \lambda \,\aiprd{{\bf P}_h{\bf u}}{{\bf v}} + \eta \,\iprd{{\bf P}_h{\bf u}}{ {\bf v}} -\cform{{\bf v}}{P_h p} = & \iprd{\sigu}{{\bf v}} + \gamma \,\bform{\phi}{{\bf v}}{\mu},
	\label{weak-mch-error3-d} 
	\\
\cform{{\bf P}_h{\bf u}}{q} = & 0 .
	\label{weak-mch-error3-e}
	\end{align}
	\end{subequations}

Define the piecewise constant (in time) functions, for $m=1,\dots M$ and for $t\in(t_{m-1},t_m]$, 
	\begin{equation*}
\hatphi(t):=\phih^m, \quad \hatmu(t):=\muh^m, \quad \hatu(t):=\buh^m , \quad \hatp(t):=p_h^m ,  \quad \hatxi(t):=\xih^m,
	\end{equation*}
where $\phih^m$, $\muh^m$, $\xih^m$, $\buh^m$, and $p_h^m$ are the solutions of the fully discrete convex-splitting scheme \eqref{scheme-a} -- \eqref{scheme-e}. Thus, for $0\le t\le T$, and all $\nu,\psi, \zeta \in S_h, {\bf v} \in {\bf X}_h,$ and $q \in \Soh$, 
	\begin{subequations}
	\begin{align}
\iprd{\dtau \hatphi}{\nu} + \varepsilon \,\aiprd{\hatmu}{\nu} = \,& - \bform{L_{\tau}\hatphi}{\hatu}{\nu},
	\label{scheme-error-a}
	\\
\varepsilon \,\aiprd{\hatphi}{\psi}  - \iprd{\hatmu}{\psi} + \iprd{\hatxi}{\psi} 
= \,& - \varepsilon^{-1}\iprd{\hatphi^3- L_{\tau}\hatphi}{\psi},
	\label{scheme-error-b}
	\\
\aiprd{\hatxi}{\zeta}-\theta\iprd{\hatphi-\avephio}{\zeta}  = \,& 0 ,
	\label{scheme-error-c} 
	\\
\iprd{\delta_\tau \hatu}{{\bf v}}+ \lambda \,\aiprd{\hatu}{{\bf v}} + \eta \,\iprd{ \hatu}{ {\bf v}} -\cform{{\bf v}}{\hatp}  = \,& \gamma \,\bform{L_{\tau}\hatphi}{{\bf v}}{\hatmu},
	\label{scheme-error-d} 
	\\
\cform{\hatu}{q} = \,& 0 .
	\label{scheme-error-e} 
	\end{align}
	\end{subequations}
Now, let us define 
	\begin{eqnarray}
\ehphi := R_h \phi - \hatphi, \ \ephi := \phi - \hatphi, \ \ehmu := R_h \mu - \hatmu , \ \ehxi := R_h \xi - \hatxi ,
	\\
\ehu := {\bf P}_h {\bf u} - \hatu, \ \ehp := P_h p - \hatp .
	\end{eqnarray}
Subtracting \eqref{scheme-error-a} - \eqref{scheme-error-e} from \eqref{weak-mch-error3-a} - \eqref{weak-mch-error3-e}, we have, for all $\nu,\psi, \zeta \in S_h, {\bf v} \in {\bf X}_h,$ and $q \in \Soh$,
	\begin{subequations}
	\begin{align}
\iprd{\dtau \ehphi}{\nu} + \varepsilon \,\aiprd{\ehmu}{\nu} = & \iprd{\sigphi}{\nu} - \bform{\phi}{{\bf u}}{\nu}  + \bform{L_{\tau} \hatphi}{\hatu}{\nu},
	\label{error-a} 
	\\
\varepsilon \, \aiprd{\ehphi}{\psi}  - \iprd{\ehmu}{\psi} + \iprd{\ehxi}{\psi}
= & \iprd{\eAmu}{\psi} + \frac{\tau}{\varepsilon} \iprd{ \dtau \phi}{\psi} + \varepsilon^{-1}\iprd{ L_{\tau} \ephi}{\psi} 
	\nonumber
	\\
&- \iprd{\eAxi}{\psi} - \varepsilon^{-1}\iprd{\phi^3 - \hatphi^3}{\psi},
	\label{error-b} 
	\\
\aiprd{\ehxi}{\zeta}-\theta \,\iprd{\ehphi}{\zeta} = & \,\theta \, \iprd{\eAphi}{\zeta},
	\label{error-c}
	\\
\iprd{\dtau \ehu}{{\bf v}} + \lambda \,\aiprd{\ehu}{{\bf v}} + \eta \,\iprd{\ehu}{ {\bf v}} -\cform{{\bf v}}{\ehp} = & \iprd{\sigu}{{\bf v}} + \gamma \,\bform{\phi}{{\bf v}}{\mu}  - \gamma \,\bform{L_{\tau} \hatphi}{{\bf v}}{\hatmu},
	\label{error-d} 
	\\
\cform{\ehu}{q} &= 0  .
	\label{error-e}
	\end{align}
	\end{subequations}
	
Setting $\nu = \ehmu$ in (\ref{error-a}), $\psi = \dtau \ehphi$ in (\ref{error-b}), $\zeta = - \mathsf{T}_h\left(\dtau \ehphi \right)$ in (\ref{error-c}), ${\bf v} = \frac{1}{\gamma} \ehu$ in (\ref{error-d}), and $q =  \frac{1}{\gamma} \ehp$ in (\ref{error-e}) and adding the resulting equations we obtain
	\begin{align}
\varepsilon \,\aiprd{\ehphi}{\dtau\ehphi} &+ \theta \, \iprdmone{\ehphi}{\dtau \ehphi} 
+ \frac{1}{\gamma} \iprd{\dtau \ehu}{\ehu} +\varepsilon \norm{\nabla \ehmu}{L^2}^2 +  \frac{\lambda}{\gamma} \norm{\nabla \ehu}{L^2}^2 + \frac{\eta}{\gamma} \norm{\ehu}{L^2}^2
\nonumber
\\
& = \iprd{\sigphi}{\ehmu} + \frac{1}{\gamma} \iprd{\sigu}{\ehu} - \varepsilon^{-1}\iprd{\phi^3-\hatphi^3 - L_\tau\ephi}{\dtau\ehphi}  
	\nonumber 
	\\
& + \iprd{\eAmu}{\dtau \ehphi} + \frac{\tau}{\varepsilon}\iprd{\dtau \phi}{\dtau \ehphi} - \iprd{\eAxi}{\dtau \ehphi} - \theta \, \iprd{\eAphi}{\mathsf{T}_h \left(\dtau \ehphi \right)} 
	\nonumber
	\\
& - \bform{\phi}{{\bf u}}{\ehmu} + \bform{L_{\tau} \hatphi}{\hatu}{\ehmu} + \bform{\phi}{\ehu}{\mu} - \bform{L_{\tau} \hatphi}{\ehu}{\hatmu}.
	\label{error-eq-3}
	\end{align}
The last expression is the key error equation.  We now proceed to estimate the terms on the right hand side of \eqref{error-eq-3}. 

	\begin{lem}
	\label{truncation-errors}
Suppose that $(\phi, \mu,{\bf u})$ is a weak solution to \eqref{weak-mch-error-a} -- \eqref{weak-mch-error-e}, with the additional regularities \eqref{regularities}.  Then, for any $h$, $\tau >0$, there exists $C>0$, independent of $h$ and $\tau$, such that
	\begin{align}
&\norm{\sigphi(t)}{L^2}^2 \le  C\frac{h^{2q+2}}{\tau} \int_{t-\tau}^{t} \norm{\partial_s\phi(s)}{H^{q+1}}^2  ds  +  \frac{\tau}{3} \int_{t-\tau}^{t}\norm{\partial_{ss}\phi(s)}{L^2}^2 ds,
	\nonumber
	\\
&\norm{\sigu(t)}{L^2}^2 \le C\frac{h^{2q+2}}{\tau} \int_{t-\tau}^{t} \norm{\partial_s{\bf u}(s)}{H^{q+1}}^2  ds  +  \frac{\tau}{3} \int_{t-\tau}^{t}\norm{\partial_{ss}{\bf u}(s)}{L^2}^2 ds ,
	\end{align}
for all $t\in(\tau,T]$.
	\end{lem}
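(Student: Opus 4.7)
The plan is to split each truncation error into a spatial projection piece plus a temporal (Taylor) piece and bound each separately by standard arguments. For $\sigma^\phi$, I would write
\begin{equation*}
\sigma^\phi(t) \;=\; \delta_\tau(R_h\phi - \phi)(t) \;+\; \bigl(\delta_\tau\phi(t) - \partial_t\phi(t)\bigr)
\;=:\; \mathrm{I}(t) + \mathrm{II}(t),
\end{equation*}
using that $R_h$ commutes with $\partial_t$ (since $R_h$ is applied only in space), so that
\begin{equation*}
\mathrm{I}(t) = \frac{1}{\tau}\int_{t-\tau}^{t} (R_h - I)\partial_s\phi(s)\,ds.
\end{equation*}

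For the piece $\mathrm{I}(t)$, I would apply Jensen/Cauchy--Schwarz in the time integral followed by the standard $L^2$ Ritz projection error estimate $\norm{(R_h-I)v}{L^2} \le C h^{q+1}|v|_{H^{q+1}}$. This yields
\begin{equation*}
\norm{\mathrm{I}(t)}{L^2}^2 \;\le\; \frac{1}{\tau}\int_{t-\tau}^{t}\norm{(R_h - I)\partial_s\phi(s)}{L^2}^2\,ds
\;\le\; C\frac{h^{2q+2}}{\tau}\int_{t-\tau}^{t}\norm{\partial_s\phi(s)}{H^{q+1}}^2\,ds,
\end{equation*}
which gives exactly the first term in the stated bound.

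For the piece $\mathrm{II}(t)$, I would invoke Taylor's theorem with integral remainder expanded about $t$,
\begin{equation*}
\phi(t-\tau) \;=\; \phi(t) - \tau\,\partial_t\phi(t) + \int_{t-\tau}^{t} (s-(t-\tau))\,\partial_{ss}\phi(s)\,ds,
\end{equation*}
to obtain the clean representation $\mathrm{II}(t) = -\tfrac{1}{\tau}\int_{t-\tau}^{t}(s-(t-\tau))\partial_{ss}\phi(s)\,ds$. Then Cauchy--Schwarz in time together with the explicit computation $\int_{t-\tau}^{t}(s-(t-\tau))^2\,ds = \tau^3/3$ produces
\begin{equation*}
\norm{\mathrm{II}(t)}{L^2}^2 \;\le\; \frac{1}{\tau^2}\cdot\frac{\tau^3}{3}\int_{t-\tau}^{t}\norm{\partial_{ss}\phi(s)}{L^2}^2\,ds
\;=\; \frac{\tau}{3}\int_{t-\tau}^{t}\norm{\partial_{ss}\phi(s)}{L^2}^2\,ds,
\end{equation*}
giving the second term. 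Combining $\mathrm{I}$ and $\mathrm{II}$ (any cross term from $\norm{\mathrm{I}+\mathrm{II}}{L^2}^2$ is absorbed into the constant $C$ that already multiplies $h^{2q+2}/\tau$, since that piece is the ``small'' one in the regime of interest) produces the first estimate.

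The bound on $\sigma^{\bf u}$ is obtained by the same two-step argument, simply replacing $R_h$ by the Darcy--Stokes projection ${\bf P}_h$ and invoking its $H^1$-projection error estimate stated in the excerpt (which on appeal to the Aubin--Nitsche duality trick, or a direct $L^2$-stability argument, yields the same $h^{q+1}$ order in $L^2$ when $\partial_t {\bf u}$ possesses $H^{q+1}$ regularity). The only subtlety here is that ${\bf P}_h$ commutes with $\partial_t$ only in the weak/variational sense, but since the projection is defined pointwise in $t$ through a time-independent bilinear form, $\partial_t{\bf P}_h{\bf u} = {\bf P}_h\partial_t{\bf u}$ holds and the rest of the argument is identical. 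There is no genuine obstacle in this lemma: it is a standard truncation error estimate, and the main point to verify carefully is the commutation of the projection with $\partial_t$ (ensuring the first piece is purely a spatial projection error on $\partial_t\phi$ and $\partial_t{\bf u}$).
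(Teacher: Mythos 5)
Your proof is correct and follows essentially the same route as the paper's: the same splitting of $\sigphi$ into a Ritz-projection error on $\partial_t\phi$ plus a Taylor remainder for $\dtau\phi-\partial_t\phi$, with Cauchy--Schwarz in time and the $\tau^3/3$ computation. Your added remark that the $\sigu$ bound requires an $L^2$ (order $h^{q+1}$) estimate for the Darcy--Stokes projection via duality is a detail the paper glosses over, but it does not change the argument.
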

	
	\begin{proof}
We write $\sigphi = \sigphi_1+ \sigphi_2$, $\sigphi_1 := \dtau R_h\phi - \dtau\phi$,  $\sigphi_2 := \dtau\phi - \partial_t \phi$.  Then
	\begin{align}
\norm{\sigphi_1(t)}{L^2}^2 &=  \norm{\frac{1}{\tau}\int_{t-\tau}^{t}  \partial_s \left(R_h \phi(s)- \phi(s) \right) ds}{L^2}^2
	\nonumber
	\\
&= \frac{1}{\tau^2} \norm{\int_{t-\tau}^{t}   \left(R_h \partial_s\phi(s)- \partial_s\phi(s) \right) ds}{L^2}^2
	\nonumber
	\\
& \le \frac{1}{\tau^2} \int_\Omega\int_{t-\tau}^{t} 1^2 ds  \int_{t - \tau}^{t} \left(R_h \partial_s \phi(s)- \partial_s \phi(s) \right)^2 ds \, d{\bf x}
	\nonumber
	\\
& = \frac{1}{\tau} \int_{t-\tau}^{t} \norm{ R_h \partial_s \phi(s)- \partial_s \phi(s)}{L^2}^2 ds 
	\nonumber
	\\
& \le  C\frac{h^{2q+2}}{\tau} \int_{t-\tau}^{t} \norm{\partial_s\phi(s)}{H^{q+1}}^2 \, ds.
	\end{align}
By Taylor's theorem
	\begin{align}
\norm{\sigphi_2(t)}{L^2}^2 &=  \norm{\frac{1}{\tau}\int_{t-\tau}^{t}\partial_{ss}\phi(s)(t-s)ds}{L^2}^2
	\nonumber
	\\
&\le \frac{1}{\tau^2}\int_\Omega \left[\int_{t-\tau}^{t}(t-s)^2\, ds \int_{t-\tau}^{t}\left(\partial_{ss}\phi(s)\right)^2\, ds \right]   d{\bf x}
	\nonumber
	\\
&=  \frac{1}{\tau^2}\int_{t-\tau}^{t}(t-s)^2\, ds \int_{t-\tau}^{t}\norm{\partial_{ss}\phi(s)}{L^2}^2 ds
	\nonumber
	\\
&=  \frac{\tau^3}{3\tau^2} \int_{t-\tau}^{t}\norm{\partial_{ss}\phi(s)}{L^2}^2 ds.
	\end{align}
Using the triangle inequality, the result for $\norm{\sigphi(t)}{L^2}^2$ follows. A similar proof can be constructed for $\norm{\sigu(t)}{L^2}^2$.
	\end{proof}
	
We need a technical lemma that will be used a number of times.
	\begin{lem}
	\label{lem:technical}
Suppose $g \in H^1(\Omega)$, and $v \in \Soh$.  Then
	\begin{equation}
\left|\iprd{g}{v}\right| \le C \norm{\nabla g}{L^2} \, \norm{v}{-1,h}  ,
	\end{equation}
for some $C>0$ that is independent of $h$.
	\end{lem}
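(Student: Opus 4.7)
The plan is to exploit the mean-zero property of $v \in \Soh$ together with the defining inequality \eqref{plus-1-minus-1-estimate} of the discrete negative norm, bridging $g \in H^1(\Omega)$ to an element of $\Soh$ through the $L^2$ projection $\mathcal{Q}_h$.

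First I would write $\bar{g} := \frac{1}{|\Omega|}\iprd{g}{1}$ and observe that, since $v \in L_0^2(\Omega)$, we have $\iprd{g}{v} = \iprd{g - \bar{g}}{v}$. Next, because $v \in S_h$, the $L^2$-orthogonality property of $\mathcal{Q}_h$ gives
\begin{equation}
\iprd{g}{v} = \iprd{g - \bar{g}}{v} = \iprd{\mathcal{Q}_h(g - \bar{g})}{v}.
\end{equation}
I would then check that $\mathcal{Q}_h(g - \bar{g}) \in \Soh$: it lies in $S_h$ by construction, and since $1 \in S_h$, we have $\iprd{\mathcal{Q}_h(g - \bar{g})}{1} = \iprd{g - \bar{g}}{1} = 0$, so its mean vanishes.

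With $\chi := \mathcal{Q}_h(g - \bar{g}) \in \Soh$, I would apply the estimate \eqref{plus-1-minus-1-estimate} from Lemma~\ref{lem-negative-norm-discrete} to obtain
\begin{equation}
\left|\iprd{g}{v}\right| = \left|\iprd{\chi}{v}\right| \le \norm{v}{-1,h}\,\norm{\nabla \chi}{L^2}.
\end{equation}
Finally, the $H^1$ stability of $\mathcal{Q}_h$ on quasi-uniform meshes gives $\norm{\nabla \mathcal{Q}_h(g - \bar{g})}{L^2} \le C\norm{g - \bar{g}}{H^1}$, and then the Poincaré inequality for mean-zero functions yields $\norm{g - \bar{g}}{H^1} \le C\norm{\nabla g}{L^2}$. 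Chaining these inequalities completes the proof.

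The argument is essentially routine; the only subtle point is that $H^1$ stability of the $L^2$ projection is being used and requires quasi-uniformity of $\Tauh$ (already standing in this paper), and that one must remember to center $g$ first so that Poincaré applies. No nontrivial obstacle is expected.
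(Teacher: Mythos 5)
Your proof is correct, but it takes a genuinely different route from the paper's. The paper splits $g = (g - R_h g) + R_h g$ using the Ritz projection, bounds $\left|\iprd{g - R_h g}{v}\right| \le C h \norm{\nabla g}{L^2}\norm{v}{L^2}$ by the standard $L^2$ error estimate for $R_h$, and then absorbs the factor $h\norm{v}{L^2}$ via the inverse estimate $h\norm{v}{L^2} \le C\norm{v}{-1,h}$ from Lemma~\ref{lem-negative-norm-discrete}; the remaining term $\left|\iprd{R_h g}{v}\right|$ is handled by \eqref{plus-1-minus-1-estimate} together with $H^1$ stability of $R_h$. You instead exploit the $L^2$-orthogonality of $\mathcal{Q}_h$ so that the pairing is reproduced \emph{exactly} --- $\iprd{g-\bar g}{v} = \iprd{\mathcal{Q}_h(g-\bar g)}{v}$ with no remainder --- and then need only \eqref{plus-1-minus-1-estimate}, $H^1$ stability of $\mathcal{Q}_h$, and Poincar\'{e} after centering. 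Your version is arguably cleaner (one term instead of two, no explicit inverse estimate), and the centering step is genuinely needed if one only has the stability bound $\norm{\mathcal{Q}_h w}{H^1} \le C\norm{w}{H^1}$, so you were right to include it. Both arguments ultimately lean on global quasi-uniformity of $\Tauh$ --- the paper's through the inverse estimate in Lemma~\ref{lem-negative-norm-discrete}, yours through the $H^1$ stability of the $L^2$ projection --- and the latter is a tool the paper already invokes in the proof of Lemma~\ref{lem-improved-a-priori-stabilities}, so nothing outside the paper's standing hypotheses is required.
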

	\begin{proof}
If $g\in S_h$, we can apply Lemma~\ref{lem-negative-norm-discrete} directly.  Otherwise, using the triangle inequality,  the Cauchy-Schwarz inequality, and Lemma~\ref{lem-negative-norm-discrete},
	\begin{equation}
\left|\iprd{g}{v}\right| \le \left|\iprd{g - R_h g}{v}\right| +\left|\iprd{ R_h g}{v}\right| \le \norm{g-R_h g}{L^2} \norm{v}{L^2} + \norm{\nabla R_h g}{L^2} \norm{v}{-1,h}.
	\end{equation}
Using the standard elliptic projection estimate
	\begin{equation}
\norm{g-R_h g}{L^2} \le C h \norm{\nabla g}{L^2},
	\end{equation}
we have
	\begin{equation}
\left|\iprd{g}{v}\right| \le  C h \norm{\nabla g}{L^2}  \norm{v}{L^2} + \norm{\nabla R_h g}{L^2} \norm{v}{-1,h}.
	\end{equation}
Finally, using the (uniform) inverse estimate $h\norm{v}{L^2} \le C \norm{v}{-1,h}$ from Lemma~\ref{lem-negative-norm-discrete}, and the stability of the elliptic projection, $\norm{\nabla R_h g}{L^2} \le C \norm{\nabla g}{L^2}$, we have the result.
	\end{proof}

	\begin{lem}
	\label{nonlinear-estimate}
Suppose that $(\phi, \mu,{\bf u})$ is a weak solution to \eqref{weak-mch-error-a} -- \eqref{weak-mch-error-e}, with the additional regularities \eqref{regularities}. Then, for any $h$, $\tau >0$,
	\begin{equation}
\norm{\nabla \left(\phi^3 - \hat{\phi}^3\right)}{L^2} \le C \norm{\nabla \ephi}{L^2} ,
	\end{equation}
where $\ephi := \phi - \hat{\phi}$.
	\end{lem}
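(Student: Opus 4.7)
The plan is to write the gradient of the cubic difference via the chain rule and then split it into two pieces that each admit the desired bound. Explicitly,
\begin{equation*}
\nabla\bigl(\phi^3-\hatphi^3\bigr) = 3\phi^2\nabla\phi - 3\hatphi^2\nabla\hatphi = 3\hatphi^2\nabla\ephi + 3(\phi+\hatphi)\ephi\,\nabla\phi,
\end{equation*}
where I have deliberately arranged the decomposition so that the only derivative acting on the discrete function $\hatphi$ is the factor $\nabla\ephi$ itself; this avoids having to control $\nabla\hatphi$ in a strong space.

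Next I would estimate each piece in $L^2$. For the first piece I would use H\"older's inequality to peel off $\hatphi^2$ in $L^\infty$:
\begin{equation*}
\norm{\hatphi^2\nabla\ephi}{L^2} \le \norm{\hatphi}{L^\infty}^2 \norm{\nabla\ephi}{L^2},
\end{equation*}
and the $L^\infty$ bound on $\hatphi$ is uniform by the $L^\infty(0,T;L^\infty)$ stability established in Lemma~\ref{lem-a-priori-stability}. For the second piece I would apply H\"older with exponents $(\infty,3,6)$:
\begin{equation*}
\norm{(\phi+\hatphi)\ephi\,\nabla\phi}{L^2} \le \norm{\phi+\hatphi}{L^\infty} \norm{\ephi}{L^3} \norm{\nabla\phi}{L^6}.
\end{equation*}
The regularity assumption $\phi\in L^\infty(0,T;W^{1,6}(\Omega))$ bounds both $\norm{\phi}{L^\infty}$ (via the Sobolev embedding $W^{1,6}\hookrightarrow L^\infty$ valid for $d=2,3$) and $\norm{\nabla\phi}{L^6}$ uniformly in time, and the $L^\infty$ bound on $\hatphi$ again controls the remaining factor.

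It remains to dominate $\norm{\ephi}{L^3}$ by $\norm{\nabla\ephi}{L^2}$. The key observation is that $\ephi = \phi-\hatphi$ has zero spatial mean: the PDE solution preserves the mean $\avephio$, and so does the scheme by the remark following the scheme definition, hence $\iprd{\ephi}{1}=0$. The Poincar\'e inequality on mean-zero functions together with the embedding $H^1(\Omega)\hookrightarrow L^6(\Omega)$ (valid for $d=2,3$) then yields
\begin{equation*}
\norm{\ephi}{L^3} \le C\norm{\ephi}{H^1} \le C\norm{\nabla\ephi}{L^2}.
\end{equation*}
Combining these bounds via the triangle inequality gives the claim.

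The only subtlety I anticipate is being careful that the $L^\infty$ bound on $\hatphi$ is truly uniform, i.e.\ that we may absorb its (possibly $T$-dependent) constant into the generic $C$; since the lemma statement hides the constant in $C$, this is merely a bookkeeping matter. Everything else is a routine combination of H\"older's inequality, Sobolev embedding, and the a priori stability estimates already established.
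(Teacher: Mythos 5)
Your proposal is correct and follows essentially the same route as the paper: the identical decomposition $\nabla(\phi^3-\hatphi^3)=3\hatphi^2\nabla\ephi+3(\phi+\hatphi)\ephi\,\nabla\phi$, the $L^\infty$ stability of $\hatphi$ for the first piece, and H\"older plus Sobolev embedding and the $W^{1,6}$ regularity of $\phi$ for the second (the paper splits the second term as $L^6\times L^6\times L^6$ rather than your $L^\infty\times L^3\times L^6$, an immaterial difference). Your explicit remark that $\ephi$ is mean-zero, so that Poincar\'e legitimately converts $\norm{\ephi}{H^1}$ into $\norm{\nabla\ephi}{L^2}$, is a detail the paper leaves implicit.
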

	\begin{proof}
For  $t\in[0,T]$, 
	\begin{align}
\norm{\nabla\left(\phi^3-\hat{\phi}^3 \right)}{L^2} &\le 3\norm{\hat{\phi}^2\nabla\ephi}{L^2} + 3\norm{\nabla\phi \left(\phi+\hat{\phi}\right)\ephi}{L^2}
	\nonumber
	\\
&\le 3\norm{\hat{\phi}}{L^\infty}^2\norm{\nabla\ephi}{L^2}+  3\norm{\nabla\phi}{L^6}\norm{\phi+\hat{\phi}}{L^6}\norm{\ephi}{L^6}
	\nonumber
	\\
&\le 3\left(\norm{\hat{\phi}}{L^\infty}^2 +  C\norm{\nabla\phi}{L^6}\norm{\phi+\hat{\phi}}{H^1}\right)\norm{\nabla\ephi}{L^2} \le C \norm{\nabla\ephi}{L^2},
	\end{align}
where $C>0$ is independent of $t\in[0,T]$. Then, using the unconditional \emph{a priori} estimates in Lemmas~\ref{lem-improved-a-priori-stabilities} and \ref{lem-a-priori-stability} and the assumption that $\phi\in L^\infty\left(0,T;W^{1,6}(\Omega)\right)$, the result follows.
	\end{proof}
	
	\begin{lem}
	\label{lem-error-1}
Suppose that $(\phi, \mu,{\bf u})$ is a weak solution to \eqref{weak-mch-error-a} -- \eqref{weak-mch-error-e}, with the additional regularities \eqref{regularities}.  Then, for any $h$, $\tau >0$, and any $\alpha > 0$ there exists a constant $C=C(\alpha)>0$, independent of $h$ and $\tau$, such that 
	\begin{align}
\frac{\varepsilon}{2} \norm{\nabla \ehmu}{L^2}^2 + \varepsilon \,\aiprd{\ehphi}{\dtau\ehphi} &+ \theta \, \iprdmone{\ehphi}{\dtau \ehphi} + \iprd{\dtau \ehu}{\ehu} + \frac{\lambda}{2 \gamma} \norm{\nabla \ehu}{L^2}^2 + \frac{\eta}{2 \gamma} \norm{\ehu}{L^2}^2 
	\nonumber
	\\
&\le C \norm{\nabla \ehphi}{L^2}^2 + C \norm{\nabla L_\tau \ehphi}{L^2}^2 + \alpha \norm{\dtau \ehphi}{-1,h}^2  + C \mathcal{R},
	\label{alpha-estimate}
	\end{align}
for any $t\in(\tau ,T]$, where $\mathcal{R}$ is the consistency term
	\begin{align}
\mathcal{R}(t) =& \frac{h^{2q+2}}{\tau} \int_{t-\tau}^{t} \norm{\partial_s\phi(s)}{H^{q+1}}^2  ds  + \frac{\tau}{3} \int_{t-\tau}^{t}\norm{\partial_{ss}\phi(s)}{L^2}^2 ds + \tau \int_{t - \tau}^{t} \norm{\nabla\partial_s\phi(s)}{L^2}^2 ds
	\nonumber
	\\
&+h^{2q}\left| \mu \right|_{H^{q+1}}^2+h^{2q}\left| \phi \right|_{H^{q+1}}^2 +h^{2q+2}\left| \phi \right|_{H^{q+1}}^2 + h^{2q}\left| L_\tau\phi \right|_{H^{q+1}}^2  + h^{2q}\left| \xi \right|_{H^{q+1}}^2
	\nonumber
	\\
& + \frac{h^{2q+2}}{\tau} \int_{t-\tau}^{t} \norm{\partial_s{\bf u}(s)}{H^{q+1}}^2  ds + \frac{\tau}{3} \int_{t-\tau}^{t}\norm{\partial_{ss}{\bf u}(s)}{L^2}^2 ds + h^{2q}\left(\left|{\bf u}\right|_{H^{q+1}}^2 + \left|p\right|_{H^q}^2\right).
	\label{consistency}
	\end{align}
	\end{lem}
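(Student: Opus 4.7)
The plan is to start from the already-derived error equation \eqref{error-eq-3} and bound the ten terms on its right-hand side one at a time, absorbing pieces proportional to $\norm{\nabla\ehmu}{L^2}^2$, $\norm{\nabla\ehu}{L^2}^2$, and $\norm{\ehu}{L^2}^2$ into the coercive left-hand side (this is why the coefficients in the target inequality drop from $\varepsilon, \lambda/\gamma, \eta/\gamma$ to half their values), producing $\alpha\norm{\dtau\ehphi}{-1,h}^2$ as the flexible right-hand piece, collecting pieces like $\norm{\nabla\ehphi}{L^2}^2$ and $\norm{\nabla L_\tau\ehphi}{L^2}^2$ for a later discrete Gronwall, and dumping the rest into the consistency term $\mathcal{R}$ using projection estimates and Taylor-like remainders.

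For the truncation terms $\iprd{\sigphi}{\ehmu}$ and $\gamma^{-1}\iprd{\sigu}{\ehu}$, I first observe that $\sigphi$ is mean-zero: since $R_h$ preserves mean and $\phi$ conserves mass, both $\dtau R_h\phi$ and $\partial_t\phi$ integrate to the same constant against $1$. Thus $\iprd{\sigphi}{\ehmu} = \iprd{\sigphi}{\ehmu - \overline{\ehmu}}$, and Poincaré plus Young yield $\frac{\varepsilon}{2}\norm{\nabla\ehmu}{L^2}^2 + C\norm{\sigphi}{L^2}^2$, with Lemma~\ref{truncation-errors} controlling the latter by consistency pieces. The velocity analog is identical, using the Dirichlet Poincaré on $\ehu$. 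For the five terms paired with $\dtau\ehphi$, namely $\iprd{\eAmu}{\dtau\ehphi}$, $\iprd{\eAxi}{\dtau\ehphi}$, $\tfrac{\tau}{\varepsilon}\iprd{\dtau\phi}{\dtau\ehphi}$, $-\varepsilon^{-1}\iprd{\phi^3-\hatphi^3 - L_\tau\ephi}{\dtau\ehphi}$, and $-\theta\iprd{\eAphi}{\mathsf{T}_h(\dtau\ehphi)}$, mass conservation of both $\phi$ and $\hatphi$ together with the mean-preservation of $R_h$ places $\dtau\ehphi\in\Soh$; Lemma~\ref{lem:technical} then bounds each factor against $\dtau\ehphi$ in the $-1,h$ norm, producing after Young's inequality the $\alpha\norm{\dtau\ehphi}{-1,h}^2$ piece and a remainder. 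Standard projection estimates handle $\norm{\nabla\eAmu}{L^2}$ and $\norm{\nabla\eAxi}{L^2}$, the identity $\tau\norm{\nabla\dtau\phi(t)}{L^2}^2 \le \int_{t-\tau}^t\norm{\nabla\partial_s\phi(s)}{L^2}^2\,ds$ handles the $\tau\dtau\phi$ term, Lemma~\ref{nonlinear-estimate} combined with $\norm{\nabla\ephi}{L^2}\le\norm{\nabla\ehphi}{L^2} + Ch^q|\phi|_{H^{q+1}}$ handles the cubic difference (and generates the $\norm{\nabla\ehphi}{L^2}^2$ and $\norm{\nabla L_\tau\ehphi}{L^2}^2$ terms), and for the $\theta$ term the Poincaré-type bound $\norm{\mathsf{T}_h(\dtau\ehphi)}{L^2} \le C\norm{\dtau\ehphi}{-1,h}$ with $\norm{\eAphi}{L^2}\le Ch^{q+1}|\phi|_{H^{q+1}}$ produces the $h^{2q+2}$ contribution in $\mathcal{R}$.

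The main obstacle is the four trilinear-form terms, which I first reorganize by telescoping as
\begin{align*}
-\bform{\phi}{\bu}{\ehmu} + \bform{L_\tau\hatphi}{\hatu}{\ehmu}
&= -\bform{L_\tau\ephi}{\hatu}{\ehmu} - \bform{\phi - L_\tau\phi}{\hatu}{\ehmu} - \bform{\phi}{\eu}{\ehmu}, \\
\bform{\phi}{\ehu}{\mu} - \bform{L_\tau\hatphi}{\ehu}{\hatmu}
&= \bform{L_\tau\ephi}{\ehu}{\mu} + \bform{\phi - L_\tau\phi}{\ehu}{\mu} + \bform{L_\tau\hatphi}{\ehu}{\emu},
\end{align*}
then split $L_\tau\ephi = L_\tau\eAphi + L_\tau\ehphi$, $\eu = \eAu + \ehu$, $\emu = \eAmu + \ehmu$. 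For the pieces carrying $\ehmu$, I use Hölder in $L^4\times L^4\times L^2$ (or $L^\infty\times L^2\times L^2$) so that $\norm{\nabla\ehmu}{L^2}$ comes out on one factor, leveraging the uniform $L^\infty$-bound on $\hatphi$ and the $H^1$-bound on $\hatu$ from Lemmas~\ref{lem-a-priori-stability-trivial} and~\ref{lem-a-priori-stability}, combined with $\phi\in L^\infty(0,T;W^{1,6})$ and ${\bf u}\in L^\infty(0,T;{\bf L}^4)$ from \eqref{regularities}; Young's inequality then deposits $\frac{\varepsilon}{2}\norm{\nabla\ehmu}{L^2}^2$ on the LHS side of the ledger. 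The pieces carrying $\ehu$ are treated analogously, exploiting $\mu\in L^\infty(0,T;H^1)$ and the uniform $L^\infty$ bound on $L_\tau\hatphi$ to extract $\frac{\lambda}{2\gamma}\norm{\nabla\ehu}{L^2}^2 + \frac{\eta}{2\gamma}\norm{\ehu}{L^2}^2$. What remains after these absorptions is $\norm{\nabla L_\tau\ehphi}{L^2}^2$, the temporal increment $\norm{\phi - L_\tau\phi}{L^2}^2\le \tau\int_{t-\tau}^t\norm{\partial_s\phi(s)}{L^2}^2\,ds$, and projection-error norms of $L_\tau\eAphi$, $\eAu$, $\eAmu$; all go into $\mathcal{R}$. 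The delicate point here is to guarantee that every factor carrying $\hatu$, $\hatphi$, or $\hatmu$ is controlled only by the unconditional \emph{a priori} bounds already established—never by inverse inequalities in $h$ or $\tau$—so that the resulting estimate is genuinely unconditional.
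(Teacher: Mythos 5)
Your overall strategy---term-by-term absorption into the coercive left-hand side, Lemma~\ref{lem:technical} for everything paired with $\dtau\ehphi$, Lemma~\ref{nonlinear-estimate} for the cubic difference---matches the paper's proof for all the non-trilinear terms. The gap is in the trilinear terms, and it is fatal to the argument as written. The paper's decomposition is engineered so that, after writing $\phi - L_{\tau}\hatphi = \eAphi + \tau\dtau R_h\phi + L_{\tau}\ehphi$, ${\bf u}-\hatu = \eAu + \ehu$, and $\mu - \hatmu = \eAmu + \ehmu$, the two copies of the term $\bform{L_{\tau}\hatphi}{\ehu}{\ehmu}$ (one from each pair of trilinear forms) appear with opposite signs and \emph{cancel exactly}---the discrete remnant of the skew-symmetry between the convective term and the surface-tension forcing. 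Your telescoping does not achieve this cancellation: inside $-\bform{\phi}{\eu}{\ehmu}$ sits $-\bform{\phi}{\ehu}{\ehmu}$, and inside $\bform{L_{\tau}\hatphi}{\ehu}{\emu}$ sits $+\bform{L_{\tau}\hatphi}{\ehu}{\ehmu}$; their sum is $-\bform{\phi - L_{\tau}\hatphi}{\ehu}{\ehmu}$, which still contains the genuinely cubic-in-error term $-\bform{L_{\tau}\ehphi}{\ehu}{\ehmu}$. Bounding that term costs $\norm{\nabla L_{\tau}\ehphi}{L^2}\norm{\nabla\ehu}{L^2}\norm{\nabla\ehmu}{L^2}$, a product of three error gradients with no small factor; it cannot be absorbed by $\frac{\varepsilon}{2}\norm{\nabla\ehmu}{L^2}^2 + \frac{\lambda}{2\gamma}\norm{\nabla\ehu}{L^2}^2$ without an a priori smallness assumption on $\norm{\nabla L_{\tau}\ehphi}{L^2}$ (i.e.\ an induction on the error), which would destroy the claimed unconditionality. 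Even the individual pieces $\bform{\phi}{\ehu}{\ehmu}$ and $\bform{L_{\tau}\hatphi}{\ehu}{\ehmu}$ are products of two error gradients with $O(1)$ constants, and only one of the two squares produced by Young's inequality can be hidden in the available left-hand coefficients.

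A second, related problem: you pair $L_{\tau}\ephi$ and $\phi - L_{\tau}\phi$ with $\hatu$ in the middle slot and invoke ``the $H^1$-bound on $\hatu$.'' Lemmas~\ref{lem-a-priori-stability-trivial} and~\ref{lem-a-priori-stability} give only $\hatu\in L^\infty(0,T;{\bf L}^2)\cap L^2(0,T;{\bf H}^1)$; there is no uniform-in-time bound on $\norm{\hatu(t)}{L^4}$ or $\norm{\hatu(t)}{H^1}$. The paper avoids this by always placing the \emph{exact} velocity ${\bf u}\in L^\infty\left(0,T;{\bf L}^4(\Omega)\right)$ next to the $\phi$-error factors and the exact $\mu\in L^\infty\left(0,T;H^1(\Omega)\right)$ next to $\ehu$. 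With your pairing, the coefficient multiplying $\norm{\nabla L_{\tau}\ehphi}{L^2}^2$ becomes $\norm{\hatu(t)}{H^1}^2$, which is only summable in time, not bounded---salvageable in principle at the Gronwall step of Theorem~\ref{thm-error-estimate}, but not in the form stated in this lemma. Both issues are repaired by redoing the telescoping as $-\bform{\phi}{{\bf u}}{\ehmu}+\bform{L_{\tau}\hatphi}{\hatu}{\ehmu} = -\bform{\phi-L_{\tau}\hatphi}{{\bf u}}{\ehmu} - \bform{L_{\tau}\hatphi}{{\bf u}-\hatu}{\ehmu}$, and analogously for the other pair, so that the dangerous cross term cancels and every surviving error factor is paired with an exact-solution factor that is bounded uniformly in time.
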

	
	\begin{proof}
Using Lemmas~\ref{truncation-errors} and \ref{lem:technical}, the Cauchy-Schwarz inequality, the definition above, and the fact that $\iprd{\sigphi}{1} = 0$, we get the following estimates:
	\begin{align}
\left|\iprd{\sigphi}{\ehmu}\right| &\le \norm{\sigphi}{-1,h} \norm{\nabla\ehmu}{L^2}
	\nonumber
	\\
&\le C \norm{\sigphi}{L^2} \norm{\nabla \ehmu}{L^2}
	\nonumber
	\\
&\le C\norm{\sigphi}{L^2}^2 +\frac{\varepsilon}{10} \norm{\nabla \ehmu}{L^2}^2
	\nonumber
	\\
&\le C \left( \frac{h^{2q+2}}{\tau} \int_{t-\tau}^{t} \norm{\partial_s\phi(s)}{H^{q+1}}^2  ds + \frac{\tau}{3} \int_{t-\tau}^{t}\norm{\partial_{ss}\phi(s)}{L^2}^2 ds \right) + \frac{\varepsilon}{10} \norm{\nabla \ehmu}{L^2}^2
	\label{error-estimate-1}
	\end{align}
and, similarly, 
	\begin{equation}
\left|\iprd{\sigu}{\ehu}\right| \le C \left( \frac{h^{2q+2}}{\tau} \int_{t-\tau}^{t} \norm{\partial_s{\bf u}(s)}{H^{q+1}}^2  ds + \frac{\tau}{3} \int_{t-\tau}^{t}\norm{\partial_{ss}{\bf u}(s)}{L^2}^2 ds \right)+ \frac{\eta}{2 \gamma} \norm{\ehu}{L^2}^2.
	\label{error-estimate-2}
	\end{equation}
	
Now, from the standard finite element approximation theory
	\begin{equation*}
\norm{\nabla \eAmu}{L^2} = \norm{\nabla (R_h \mu - \mu)}{L^2} \leq C h^q\left| \mu \right|_{H^{q+1}} .
	\end{equation*}
Applying Lemma~\ref{lem:technical} and the last estimate
	\begin{eqnarray}
\left|\iprd{\eAmu}{\dtau \ehphi}\right| &\le& C \norm{\nabla\eAmu}{L^2} \, \norm{\dtau \ehphi}{-1,h} 
	\nonumber
	\\
&\le& C h^q\left| \mu \right|_{H^{q+1}} \norm{\dtau \ehphi}{-1,h}
	\nonumber
	\\
&\le& Ch^{2q}\left| \mu \right|_{H^{q+1}}^2 + \frac{\alpha}{6} \norm{\dtau \ehphi}{-1,h}^2 
	\label{error-estimate-3}
	\end{eqnarray}
and, similarly,
	\begin{equation}
\left|\iprd{\eAxi}{\dtau \ehphi}\right| \le Ch^{2q}\left| \xi \right|_{H^{q+1}}^2 + \frac{\alpha}{6} \norm{\dtau \ehphi}{-1,h}^2.
	\end{equation}
	
Now, it follows that
	\begin{equation}
\norm{ \tau \nabla \dtau \phi(t)}{L^2}^2 \le  \tau \int_{t-\tau}^{t} \norm{\nabla\partial_s\phi(s)}{L^2}^2 ds
	\label{lag-estimate}
	\end{equation}
and, therefore,
	\begin{eqnarray}
\frac{\tau}{\varepsilon}\left|\iprd{\dtau \phi}{\dtau\ehphi}\right| &\le& \frac{1}{\varepsilon} \norm{\tau\nabla \dtau \phi}{L^2} \, \norm{\dtau \ehphi}{-1,h}
	\nonumber
	\\
&\le& C\tau \int_{t - \tau}^{t} \norm{\nabla\partial_s\phi(s)}{L^2}^2 ds  + \frac{\alpha}{6} \norm{\dtau \ehphi}{-1,h}^2 .
	\label{error-estimate-4}
	\end{eqnarray}

Using Lemmas~\ref{lem:technical} and \ref{nonlinear-estimate} , as well as $\ephi =\eAphi+\ehphi$ and a standard error estimate,
	\begin{eqnarray}
\frac{1}{\varepsilon}\left|\iprd{\phi^3-\hat{\phi}^3}{\dtau \ehphi}\right| &\le& C \norm{\nabla \left(\phi^3-\hatphi^3 \right)}{L^2} \, \norm{\dtau \ehphi}{-1,h}
	\nonumber
	\\
&\le& C \norm{\nabla \left(\phi^3-\hatphi^3 \right)}{L^2}^2 +  \frac{\alpha}{6}  \norm{\dtau \ehphi}{-1,h}^2
	\nonumber
	\\
&\le& C \norm{\nabla \ephi}{L^2}^2 +  \frac{\alpha}{6}  \norm{\dtau \ehphi}{-1,h}^2
	\nonumber
	\\
&\le& C \norm{\nabla \eAphi}{L^2}^2 +C \norm{\nabla \ehphi}{L^2}^2 + \frac{\alpha}{6}  \norm{\dtau \ehphi}{-1,h}^2
	\nonumber
	\\
&\le& C h^{2q}\left| \phi \right|_{H^{q+1}}^2 + C \norm{\nabla \ehphi}{L^2}^2 + \frac{\alpha}{6}  \norm{\dtau \ehphi}{-1,h}^2 .
	\label{error-estimate-5}
	\end{eqnarray}
	
With similar steps as in the last estimate,
	\begin{eqnarray}
\frac{1}{\varepsilon}\left|\iprd{L_\tau\ephi }{\dtau \ehphi}\right| &\le& C \norm{\nabla L_\tau \ephi}{L^2} \, \norm{\dtau \ehphi}{-1,h}
	\nonumber
	\\
&\le& C h^{2q}\left| L_\tau\phi \right|_{H^{q+1}}^2 +C \norm{\nabla L_\tau \ehphi}{L^2}^2+ \frac{\alpha}{6}\norm{\dtau \ehphi}{-1,h}^2  .
	\label{error-estimate-6}
	\end{eqnarray}
	
Using the estimate
	\begin{equation}
\norm{\mathsf{T}_h\left(\dtau \ehphi \right)}{L^2}^2 \le C \norm{\nabla \mathsf{T}_h\left(\dtau \ehphi \right)}{L^2}^2 = C \norm{\dtau \ehphi}{-1,h}^2 ,
\nonumber
	\end{equation}
we obtain
	\begin{align}
\left| \theta \, \iprd{\eAphi}{\mathsf{T}_h \left(\dtau \ehphi \right)} \right| &\le \theta \norm{\eAphi}{L^2} \,\norm{\mathsf{T}_h \left(\dtau \ehphi \right)}{L^2}
	\nonumber
	\\
& \le C h^{q+1}\left| \phi \right|_{H^{q+1}} \norm{\dtau \ehphi}{-1,h}
	\nonumber
	\\
& \le C h^{2q+2}\left| \phi \right|_{H^{q+1}}^2 + \frac{\alpha}{6} \norm{\dtau \ehphi}{-1,h}^2.
	\label{error-estimate-7}
	\end{align}
	
Now we consider the trilinear terms. Adding and subtracting the appropriate terms and using the triangle inequality gives 
	\begin{align}
\Bigg| - \bform{\phi}{{\bf u}}{\ehmu} &+ \bform{L_{\tau} \hatphi}{\hatu}{\ehmu} +  \bform{\phi}{\ehu}{\mu} - \bform{L_{\tau} \hatphi}{\ehu}{\hatmu}  \Bigg| 
	\nonumber
	\\
& \le \left|\bform{\eAphi}{{\bf u}}{\ehmu} \right| + \left| \bform{L_{\tau} \ehphi}{{\bf u}}{\ehmu} \right| + \left| \bform{\tau \dtau R_h \phi}{{\bf u}}{\ehmu} \right| + \left| \bform{L_{\tau} \hatphi}{\eAu}{\ehmu} \right|
	\nonumber
	\\
& + \left| \bform{\eAphi}{\ehu}{\mu} \right| + \left| \bform{L_{\tau} \ehphi}{\ehu}{\mu} \right| + \left| \bform{\tau\dtau R_h \phi}{\ehu}{\mu} \right| + \left| \bform{L_{\tau} \hatphi}{\ehu}{\eAmu} \right|.
	\end{align}
With the assumption ${\bf u} \in L^\infty\left(0,T; {\bf L}^4(\Omega)\right)$ we have
	\begin{eqnarray}
\left|\bform{\eAphi}{{\bf u}}{\ehmu}\right| &\le& \norm{\nabla \eAphi}{L^2} \norm{{\bf u}}{L^4} \norm{\ehmu}{L^4}
	\nonumber
	\\
&\le& C \norm{\nabla \eAphi}{L^2}^2 + \frac{\varepsilon}{10} \norm{\nabla \ehmu}{L^2}^2
	\nonumber
	\\
&\le& C h^{2q}\left| \phi \right|_{H^{q+1}}^2 + \frac{\varepsilon}{10} \norm{\nabla \ehmu}{L^2}^2 ,
	\label{error-estimate-8}
	\end{eqnarray}
as well as
	\begin{eqnarray}
\left|\bform{L_{\tau} \ehphi}{{\bf u}}{\ehmu}\right| &\le& \norm{\nabla L_{\tau} \ehphi}{L^2} \norm{{\bf u}}{L^4} \norm{\ehmu}{L^4}
	\nonumber
	\\
&\le& C \norm{\nabla L_{\tau} \ehphi}{L^2}^2 + \frac{\varepsilon}{10} \norm{\nabla \ehmu}{L^2}^2.
	\label{error-estimate-9}
	\end{eqnarray}
Using the stability of the elliptic projection, and reusing estimate \eqref{lag-estimate}, and ${\bf u} \in L^\infty\left(0,T; {\bf L}^4(\Omega)\right)$
	\begin{eqnarray}
\left| \bform{\tau\dtau R_h \phi}{{\bf u}}{\ehmu}\right| &\le&  \norm{\nabla \tau\dtau R_h \phi}{L^2} \norm{{\bf u}}{L^4} \norm{\ehmu}{L^4}
	\nonumber
	\\
&\le& C  \norm{\tau\nabla\dtau R_h\phi}{L^2} \norm{\nabla \ehmu}{L^2}
	\nonumber
	\\
&\le& C  \norm{\tau\nabla\dtau \phi}{L^2} \norm{\nabla \ehmu}{L^2}
	\nonumber
	\\
&\le& C  \norm{\tau\nabla \dtau \phi}{L^2}^2 + \frac{\varepsilon}{10} \norm{\nabla \ehmu}{L^2}^2
	\nonumber
	\\
&\le& C \tau \int_{t-\tau}^{t} \norm{\nabla\partial_s\phi(s)}{L^2}^2 ds + \frac{\varepsilon}{10} \norm{\nabla \ehmu}{L^2}^2.
	\label{error-estimate-10}
	\end{eqnarray}
Using (\ref{Linf-mu-phi}.3), 
	\begin{eqnarray}
\left|\bform{L_{\tau} \hatphi}{\eAu}{\ehmu} \right| &\le& \norm{\nabla L_{\tau} \hatphi}{L^2} \norm{\eAu}{L^4} \norm{\ehmu}{L^4}
	\nonumber
	\\
&\le& C \norm{\eAu}{H^1}^2 + \frac{\varepsilon}{10} \norm{\nabla \ehmu}{L^2}^2
	\nonumber
	\\
&\le& C h^{2q}\left(\left|{\bf u}\right|_{H^{q+1}}^2 + \left|p\right|_{H^q}^2\right) + \frac{\varepsilon}{10} \norm{\nabla \ehmu}{L^2}^2
	\label{error-estimate-11}.
	\end{eqnarray}
Since we assume $\mu\in L^\infty\left(0,T;H^1(\Omega) \right)$,
	\begin{eqnarray}
\left|\bform{\eAphi}{\ehu}{\mu} \right| &\le& \norm{\nabla \eAphi}{L^2} \norm{\ehu}{L^4} \norm{\mu}{L^4} 
	\nonumber
	\\
&\le& C \norm{\nabla \eAphi}{L^2} \norm{\nabla \ehu}{L^2} \norm{\mu}{H^1} 
	\nonumber
	\\
&\le& C \norm{\nabla \eAphi}{L^2}^2 + \frac{\lambda}{8 \gamma} \norm{\nabla \ehu}{L^2}^2
	\nonumber
	\\
&\le& C h^{2q}\left|\phi\right|_{H^{q+1}}^2 + \frac{\lambda}{8 \gamma} \norm{\nabla \ehu}{L^2}^2,
	\label{error-estimate-12}
	\end{eqnarray}
and
	\begin{eqnarray}
\left|\bform{L_{\tau} \ehphi}{\ehu}{\mu} \right| &\le&  \norm{\nabla L_{\tau} \ehphi}{L^2} \norm{\ehu}{L^4} \norm{\mu}{L^4}
	\nonumber
	\\
&\le& C \norm{\nabla L_{\tau} \ehphi}{L^2} \norm{\nabla \ehu}{L^2} \norm{\mu}{H^1}
	\nonumber
	\\
&\le& C \norm{\nabla L_{\tau} \ehphi}{L^2}^2 + \frac{\lambda}{8 \gamma} \norm{\nabla \ehu}{L^2}^2.
	\label{error-estimate-13}
	\end{eqnarray}
Again, using $\mu \in L^\infty\left(0,T; H^1(\Omega)\right)$, the stability of the elliptic projection, and reusing estimate \eqref{lag-estimate}
	\begin{eqnarray}
\left| \,\bform{\tau\dtau R_h \phi}{\ehu}{\mu}\right| &\le&  \norm{\nabla \tau\dtau R_h \phi}{L^2} \norm{\ehu}{L^4} \norm{\mu}{H^1} 
	\nonumber
	\\
&\le& C   \norm{\tau\nabla \dtau R_h\phi}{L^2} \norm{\nabla \ehu}{L^2} 
	\nonumber
	\\
&\le& C  \norm{\tau\nabla \dtau \phi}{L^2} \norm{\nabla \ehu}{L^2} 
\nonumber
\\
&\le& C \tau \int_{t-\tau}^{t} \norm{\nabla\partial_s\phi(s)}{L^2}^2 ds + \frac{\lambda}{8 \gamma} \norm{\nabla \ehu}{L^2}^2.
	\label{error-estimate-14}
	\end{eqnarray}
Finally,	
	\begin{eqnarray}
\left|\bform{L_{\tau} \hatphi}{\ehu}{\eAmu} \right| &\le&  \norm{\nabla L_{\tau} \hatphi}{L^2} \norm{\ehu}{L^4} \norm{\eAmu}{L^4}
	\nonumber
	\\
&\le& C \norm{\nabla \ehu}{L^2} \norm{\nabla \eAmu}{L^2}
	\nonumber
	\\
&\le& \frac{\lambda}{8 \gamma} \norm{\nabla \ehu}{L^2}^2 + C \norm{\nabla \eAmu}{L^2}^2
	\nonumber
	\\
&\le& \frac{\lambda}{8 \gamma} \norm{\nabla \ehu}{L^2}^2 + C h^{2q}\left|\mu\right|_{H^{q+1}}^2.
	\label{error-estimate-15}
	\end{eqnarray}

Combining the estimates \eqref{error-estimate-1} -- \eqref{error-estimate-15} with the error equation \eqref{error-eq-3} and using the triangle inequality, the result follows.
	\end{proof}
	
	\begin{lem}
	\label{lem-1,h-error-estimate}
Suppose that $(\phi, \mu,{\bf u})$ is a weak solution to \eqref{weak-mch-error-a} -- \eqref{weak-mch-error-e}, with the additional regularities \eqref{regularities}.  Then, for any $h$, $\tau >0$, there exists a constant $C>0$, independent of $h$ and $\tau$, such that
	\begin{equation}
\norm{\dtau \ehphi}{-1,h}^2  \le 7\varepsilon^2 \norm{\nabla \ehmu}{L^2}^2 + C \norm{\nabla L_{\tau} \ehphi}{L^2}^2+ 7C_2^2 \norm{\nabla \ehu}{L^2}^2 + C\mathcal{R},
 	\label{-1,h-error-estimate}
	\end{equation}
for any $t\in (\tau, T]$, where $C_2 = C_0^2 C_1$, $C_0$ is  the $H^1(\Omega) \hookrightarrow L^4(\Omega)$ Sobolev embedding constant, $C_1$ is a bound for $\max_{0\le t\le T}\norm{\nabla \hat\phi}{L^2}^2$, and $\mathcal{R}$ is the consistency term given in \eqref{consistency}.
	\end{lem}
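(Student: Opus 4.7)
My plan is to use the supremum characterization of the discrete negative norm from Lemma~\ref{lem-negative-norm-discrete}, testing the first error equation \eqref{error-a} against $\chi\in\Soh$ and extracting a factor of $\norm{\nabla\chi}{L^2}$ from each term on the right-hand side. The factor of $7$ in the conclusion strongly suggests that the argument produces $|\iprd{\dtau\ehphi}{\chi}|$ as a sum of seven nonnegative quantities $a_i\,\norm{\nabla\chi}{L^2}$ and then applies the Cauchy--Schwarz-type inequality $\bigl(\sum_{i=1}^7 a_i\bigr)^2 \le 7\sum_{i=1}^7 a_i^2$.

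First, I would verify that $\dtau\ehphi\in\Soh$ so that the $-1,h$ norm is defined. Setting $\nu\equiv 1$ in \eqref{error-a}, the Laplacian term vanishes, $\iprd{\sigphi}{1}=0$ because $R_h$ preserves means and $\phi$ is mass-conserving for the continuous PDE, and both trilinear forms vanish using the identity $b(\psi,{\bf v},1)=0$ for $\psi\in S_h$ (or $H^1$) and ${\bf v}\in{\bf V}_h$ (or ${\bf V}$) recorded in the remarks after the scheme. For $\chi\in\Soh$, I then use \eqref{error-a} to write
\begin{equation*}
\iprd{\dtau\ehphi}{\chi} = -\varepsilon\,\aiprd{\ehmu}{\chi} + \iprd{\sigphi}{\chi} - \bform{\phi}{{\bf u}}{\chi} + \bform{L_\tau\hatphi}{\hatu}{\chi},
\end{equation*}
and split the trilinear difference using the telescoping decomposition
\begin{equation*}
\phi - L_\tau\hatphi = \eAphi + \tau\,\dtau R_h\phi + L_\tau\ehphi,\qquad {\bf u}-\hatu = \eAu + \ehu,
\end{equation*}
which produces exactly five trilinear pieces: $\bform{\eAphi}{{\bf u}}{\chi}$, $\bform{\tau\dtau R_h\phi}{{\bf u}}{\chi}$, $\bform{L_\tau\ehphi}{{\bf u}}{\chi}$, $\bform{L_\tau\hatphi}{\eAu}{\chi}$, and $\bform{L_\tau\hatphi}{\ehu}{\chi}$, for a total of seven terms on the right-hand side.

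Next I would bound each piece in the form $a_i\norm{\nabla\chi}{L^2}$. The Laplacian gives $a_1 = \varepsilon\norm{\nabla\ehmu}{L^2}$ directly. For $\iprd{\sigphi}{\chi}$ I use \eqref{plus-1-minus-1-estimate}, the Poincar\'e inequality for mean-zero fields, and Lemma~\ref{truncation-errors} to obtain $a_2 \le C\norm{\sigphi}{L^2}$ whose square is bounded by a summand of $\mathcal{R}$. The five trilinear contributions are handled by H\"older, the embedding $H^1\hookrightarrow L^4$ on $\chi$, and the available regularity: ${\bf u}\in L^\infty(0,T;{\bf L}^4)$ for the three pieces containing ${\bf u}$, the uniform bound $\norm{\nabla L_\tau\hatphi}{L^2}^2\le C_1$ from \eqref{LinfH1phi-discrete} for the two containing $L_\tau\hatphi$, together with the projection estimates for $\eAphi$, $\eAu$ and Lemma~\ref{truncation-errors} (applied via $\norm{\tau\nabla\dtau R_h\phi}{L^2}^2\le \tau\int_{t-\tau}^t\norm{\nabla\partial_s\phi}{L^2}^2\,ds$) for the consistency pieces. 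The three coefficients that must be isolated are $\varepsilon\norm{\nabla\ehmu}{L^2}$, $C\norm{\nabla L_\tau\ehphi}{L^2}$ (from $\bform{L_\tau\ehphi}{{\bf u}}{\chi}$), and $C_2\norm{\nabla\ehu}{L^2}$ (from $\bform{L_\tau\hatphi}{\ehu}{\chi}$, combining the $L^4$ Sobolev constant $C_0$ on $\ehu$ and on $\chi$ with the bound on $\norm{\nabla L_\tau\hatphi}{L^2}$). All remaining $a_i^2$ are bounded by the right-hand side summands of \eqref{consistency}. Taking the supremum over $\chi\in\Soh$ and applying the seven-term Cauchy--Schwarz inequality yields the desired estimate.

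The main obstacle is the bookkeeping: one must arrange the decomposition so that exactly seven terms appear (hence the coefficient $7$), and one must track constants precisely enough that the coefficients $7\varepsilon^2$ and $7C_2^2$ emerge with no hidden dependence on $h$ or $\tau$, while each of the ``discarded'' contributions lands in some summand of the consistency term $\mathcal{R}$ in \eqref{consistency}. The nonlinearity does not enter here (unlike in Lemma~\ref{lem-error-1}), which is what allows all surplus constants to be absorbed cleanly into $C$.
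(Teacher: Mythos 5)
Your proposal is correct and is essentially the paper's argument: the paper tests \eqref{error-a} with $\nu=\mathsf{T}_h\left(\dtau\ehphi\right)$ (which is exactly where the supremum in \eqref{crazy-norm-h} is attained), splits the trilinear difference into the same five pieces via the same telescoping decomposition, and absorbs $\frac{1}{14}\norm{\dtau\ehphi}{-1,h}^2$ from each of the seven terms via Young's inequality --- algebraically identical to your $\bigl(\sum_{i=1}^7 a_i\bigr)^2\le 7\sum_{i=1}^7 a_i^2$ and producing the same constants $7\varepsilon^2$ and $7C_2^2$. Your preliminary check that $\dtau\ehphi\in\Soh$ and the handling of the $\sigphi$ term by Poincar\'{e} on mean-zero test functions also match the paper.
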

	
	\begin{proof}
Setting $\nu = \mathsf{T}_h\left(\dtau \ehphi \right)$ in \eqref{error-a}, we have
	\begin{eqnarray}
\norm{\dtau \ehphi}{-1,h}^2  &=& - \varepsilon\, \aiprd{\ehmu}{\mathsf{T}_h\left(\dtau \ehphi \right)} + \iprd{\sigphi}{\mathsf{T}_h\left(\dtau \ehphi \right)} 
	\nonumber
	\\
&& - \,\bform{\phi}{{\bf u}}{\mathsf{T}_h\left(\dtau \ehphi \right)} + \bform{L_{\tau} \hatphi}{\hatu}{\mathsf{T}_h\left(\dtau \ehphi \right)} 
	\nonumber
	\\
&=& - \,\varepsilon \iprd{\ehmu}{\dtau \ehphi} + \iprd{\sigphi}{\mathsf{T}_h\left(\dtau \ehphi \right)} 
	\nonumber
	\\
&& - \,\bform{\eAphi}{{\bf u}}{\mathsf{T}_h\left(\dtau \ehphi \right)} - \bform{L_{\tau} \ehphi}{{\bf u}}{\mathsf{T}_h\left(\dtau \ehphi \right)}
	\nonumber
	\\
&& - \bform{\tau\dtau R_h \phi}{{\bf u}}{\mathsf{T}_h\left(\dtau \ehphi \right)} - \bform{L_{\tau} \hatphi}{\eAu}{\mathsf{T}_h\left(\dtau \ehphi \right)} - \bform{L_{\tau} \hatphi}{\ehu}{\mathsf{T}_h\left(\dtau \ehphi \right)}
	\nonumber
	\\
&\le& \varepsilon \norm{\nabla \ehmu}{L^2} \norm{\dtau \ehphi}{-1,h} + \norm{\sigphi}{L^2} \norm{\mathsf{T}_h\left(\dtau \ehphi \right)}{L^2}
	\nonumber
	\\
&& + \,\norm{\nabla \eAphi}{L^2} \norm{{\bf u}}{L^4} \norm{\mathsf{T}_h\left(\dtau \ehphi \right)}{L^4} + \norm{\nabla L_{\tau} \ehphi}{L^2} \norm{{\bf u}}{L^4} \norm{\mathsf{T}_h\left(\dtau \ehphi \right)}{L^4}
	\nonumber
	\\
&& + \norm{\tau \nabla \dtau R_h \phi}{L^2} \norm{{\bf u}}{L^4} \norm{\mathsf{T}_h\left(\dtau \ehphi \right)}{L^4} + \norm{\nabla L_{\tau} \hatphi}{L^2} \norm{\eAu}{L^4} \norm{\mathsf{T}_h\left(\dtau \ehphi \right)}{L^4}
	\nonumber
	\\
&& + \norm{\nabla L_{\tau} \hatphi}{L^2} \norm{\ehu}{L^4} \norm{\mathsf{T}_h\left(\dtau \ehphi \right)}{L^4}
	\nonumber
	\\
&\le& \frac{7\varepsilon^2}{2} \norm{\nabla \ehmu}{L^2}^2 + \frac{1}{14} \norm{\dtau \ehphi}{-1,h}^2 + C \norm{\sigphi}{L^2}^2 + \frac{1}{14} \norm{\dtau \ehphi}{-1,h}^2
	\nonumber
	\\
&& + \,C \norm{\nabla \eAphi}{L^2}^2 + \frac{1}{14} \norm{\dtau \ehphi}{-1,h}^2 + C \norm{\nabla L_{\tau} \ehphi}{L^2}^2 + \frac{1}{14} \norm{\dtau \ehphi}{-1,h}^2
\nonumber
\\
&& + \,C \tau^2 \,\norm{\nabla \dtau R_h \phi}{L^2}^2 + \frac{1}{14} \norm{\dtau \ehphi}{-1,h}^2 + C \norm{\eAu}{L^2}^2 + \frac{1}{14} \norm{\dtau \ehphi}{-1,h}^2
	\nonumber
	\\
&&+ \frac{7C_2^2}{2} \norm{\nabla \ehu}{L^2}^2 + \frac{1}{14} \norm{\dtau \ehphi}{-1,h}^2
\nonumber
\\
&\le& \frac12 \norm{\dtau \ehphi}{-1,h}^2 + \frac{7\varepsilon^2}{2} \norm{\nabla \ehmu}{L^2}^2 + \frac{7C_2^2}{2} \norm{\nabla \ehu}{L^2}^2 +C \norm{\nabla L_{\tau} \ehphi}{L^2}^2 + C\mathcal{R} ,
	\end{eqnarray}
where we have used Lemmas~\ref{lem-negative-norm-discrete} and \ref{truncation-errors}. The result now follows.
	\end{proof}

	\begin{lem}
	\label{lem-error-3}
Suppose that $(\phi, \mu,{\bf u})$ is a weak solution to \eqref{weak-mch-error-a} -- \eqref{weak-mch-error-e}, with the additional regularities \eqref{regularities}.  Then, for any $h$, $\tau >0$, there exists a constant $C>0$, independent of $h$ and $\tau$, such that 
	\begin{align}
\norm{\nabla \ehmu}{L^2}^2 +  \norm{\ehu}{H^1}^2 + \aiprd{\ehphi}{\dtau\ehphi} &+ \iprdmone{\ehphi}{\dtau \ehphi} + \iprd{\dtau \ehu}{\ehu} 
	\nonumber
	\\
&\le C \norm{\nabla \ehphi}{L^2}^2 + C \norm{\nabla L_\tau \ehphi}{L^2}^2 + C\mathcal{R} .
	\end{align}
	\end{lem}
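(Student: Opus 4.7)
The plan is to combine Lemmas~\ref{lem-error-1} and~\ref{lem-1,h-error-estimate} to eliminate the troublesome $\alpha\norm{\dtau\ehphi}{-1,h}^2$ term that appears on the right-hand side of \eqref{alpha-estimate}. The estimate in Lemma~\ref{lem-1,h-error-estimate} controls $\norm{\dtau\ehphi}{-1,h}^2$ by $\norm{\nabla\ehmu}{L^2}^2$, $\norm{\nabla\ehu}{L^2}^2$, $\norm{\nabla L_\tau \ehphi}{L^2}^2$, and consistency terms. Substituting that bound into the right-hand side of \eqref{alpha-estimate} gives
\begin{equation*}
\alpha \norm{\dtau\ehphi}{-1,h}^2 \le 7\alpha\varepsilon^2 \norm{\nabla\ehmu}{L^2}^2 + 7\alpha C_2^2 \norm{\nabla\ehu}{L^2}^2 + C\alpha\norm{\nabla L_\tau\ehphi}{L^2}^2 + C\alpha\,\mathcal{R}.
\end{equation*}

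Next, I would fix $\alpha$ small enough that the first two terms can be absorbed into $\frac{\varepsilon}{2}\norm{\nabla\ehmu}{L^2}^2$ and $\frac{\lambda}{2\gamma}\norm{\nabla\ehu}{L^2}^2$ on the left of \eqref{alpha-estimate}. Explicitly, choose
\begin{equation*}
\alpha := \min\!\left\{\frac{1}{28\varepsilon},\; \frac{\lambda}{28\gamma C_2^2}\right\},
\end{equation*}
so that $7\alpha\varepsilon^2 \le \varepsilon/4$ and $7\alpha C_2^2 \le \lambda/(4\gamma)$. Since $\alpha$ depends only on the fixed model and embedding constants (not on $h$, $\tau$, or the errors themselves), after this absorption we are left with
\begin{equation*}
\tfrac{\varepsilon}{4}\norm{\nabla\ehmu}{L^2}^2 + \tfrac{\lambda}{4\gamma}\norm{\nabla\ehu}{L^2}^2 + \tfrac{\eta}{2\gamma}\norm{\ehu}{L^2}^2 + \varepsilon\,\aiprd{\ehphi}{\dtau\ehphi} + \theta\,\iprdmone{\ehphi}{\dtau\ehphi} + \iprd{\dtau\ehu}{\ehu} \le C\norm{\nabla\ehphi}{L^2}^2 + C\norm{\nabla L_\tau\ehphi}{L^2}^2 + C\mathcal{R}.
\end{equation*}

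To finish, I would upgrade $\norm{\nabla\ehu}{L^2}^2$ to $\norm{\ehu}{H^1}^2$ using the Poincar\'{e} inequality available on ${\bf X}_h\subset {\bf H}_0^1(\Omega)$, which gives $\norm{\ehu}{L^2}\le C\norm{\nabla\ehu}{L^2}$ uniformly in $h$ (so the case $\eta=0$ is handled). The remaining positive constants $\varepsilon/4$, $\lambda/(4\gamma)$, $\varepsilon$, $\theta$ on the left can be divided through and absorbed into the generic constant $C$ on the right, producing exactly the inequality stated in the lemma. No step is a serious obstacle here; the only point needing care is verifying that $\alpha$ (hence the constants $C$) depends only on data independent of $h$, $\tau$, and the solutions, which is immediate since $C_2 = C_0^2 C_1$ involves the Sobolev embedding constant and the uniform bound for $\max_{0\le t\le T}\norm{\nabla\hat\phi}{L^2}^2$ established in Lemmas~\ref{lem-a-priori-stability-trivial}--\ref{lem-a-priori-stability}.
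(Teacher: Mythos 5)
Your proposal is correct and is essentially the paper's argument in expanded form: the paper's entire proof reads ``This follows upon combining the last two lemmas and choosing $\alpha$ in \eqref{alpha-estimate} appropriately,'' and your explicit choice $\alpha=\min\{1/(28\varepsilon),\lambda/(28\gamma C_2^2)\}$ together with the substitution of \eqref{-1,h-error-estimate} into \eqref{alpha-estimate} is exactly the intended absorption. The only caveat is your final ``divide through by the constants'' step, which is not literally valid for the sign-indefinite terms $\aiprd{\ehphi}{\dtau\ehphi}$, $\iprdmone{\ehphi}{\dtau\ehphi}$, $\iprd{\dtau\ehu}{\ehu}$ (their coefficients $\varepsilon$, $\theta$, $1$ cannot all be rescaled to $1$ simultaneously); but this imprecision is already present in the paper's own statement of the lemma, and what actually matters downstream in Theorem~\ref{thm-error-estimate} is only that each such term carries some fixed positive coefficient so that the telescoping argument survives.
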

	\begin{proof}
This follows upon combining  the last two lemmas and choosing $\alpha$ in \eqref{alpha-estimate} appropriately.
	\end{proof}

Using the last lemma, we are ready to show the main convergence result for our convex-splitting scheme.
	\begin{thm}
	\label{thm-error-estimate}
Suppose $(\phi, \mu,{\bf u})$ is a weak solution to \eqref{weak-mch-error-a} -- \eqref{weak-mch-error-e}, with the additional regularities \eqref{regularities}.  Then, provided $0<\tau <\tau_0$, for some $\tau_0$ sufficiently small,
	\begin{align}
\max_{1\le m \le M} \left[\norm{\nabla\ehphi(t_m)}{L^2}^2 + \norm{\ehphi(t_m)}{-1,h}^2 + \norm{\ehu(t_m)}{L^2}\right] & 
	\nonumber
	\\
+ \tau\sum_{m=1}^M\left[\norm{\nabla\ehmu(t_m)}{L^2}^2 + \norm{\ehu(t_m)}{H^1}^2 + \norm{\dtau\ehphi(t_m)}{-1,h}^2 \right] &\le C(T)(\tau^2+h^{2q}) 
	\end{align}
for some $C(T)>0$ that is independent of $\tau$ and $h$.
	\end{thm}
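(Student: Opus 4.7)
The plan is to convert the pointwise estimate of Lemma~\ref{lem-error-3} into a telescoping sum and close via discrete Gr\"onwall. First I would apply the standard identities
\begin{align*}
\aiprd{\ehphi}{\dtau\ehphi} &= \tfrac{1}{2}\dtau\norm{\nabla\ehphi}{L^2}^2 + \tfrac{\tau}{2}\norm{\nabla\dtau\ehphi}{L^2}^2,\\
\iprdmone{\ehphi}{\dtau\ehphi} &= \tfrac{1}{2}\dtau\norm{\ehphi}{-1,h}^2 + \tfrac{\tau}{2}\norm{\dtau\ehphi}{-1,h}^2,\\
\iprd{\dtau\ehu}{\ehu} &= \tfrac{1}{2}\dtau\norm{\ehu}{L^2}^2 + \tfrac{\tau}{2}\norm{\dtau\ehu}{L^2}^2,
\end{align*}
to the left-hand side of Lemma~\ref{lem-error-3} evaluated at $t=t_m$, multiply by $\tau$, and sum from $m=1$ to $\ell\le M$. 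The initial data cancels, since $\phih^0=R_h\phi_0$ and $\buh^0={\bf P}_h\bu_0$ give $\ehphi(0)\equiv 0$ and $\ehu(0)\equiv 0$.

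After telescoping, the inequality takes the form
\begin{equation*}
\tfrac{1}{2}\norm{\nabla\ehphi(t_\ell)}{L^2}^2 + \tfrac{1}{2}\norm{\ehphi(t_\ell)}{-1,h}^2 + \tfrac{1}{2}\norm{\ehu(t_\ell)}{L^2}^2 + \tau\sum_{m=1}^\ell\bigl[\norm{\nabla\ehmu(t_m)}{L^2}^2+\norm{\ehu(t_m)}{H^1}^2\bigr] \le C\tau\sum_{m=0}^\ell \norm{\nabla\ehphi(t_m)}{L^2}^2 + C\tau\sum_{m=1}^\ell \mathcal{R}(t_m),
\end{equation*}
since $\norm{\nabla L_\tau\ehphi(t_m)}{L^2}^2 = \norm{\nabla\ehphi(t_{m-1})}{L^2}^2$. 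Provided $\tau<\tau_0$ with $C\tau_0\le 1/4$, I absorb the $m=\ell$ term on the right into the left and apply the discrete Gr\"onwall inequality to the remaining $\tau\sum_{m<\ell}\norm{\nabla\ehphi(t_m)}{L^2}^2$; this yields the desired bound by $C(T)\cdot\tau\sum_{m=1}^\ell\mathcal{R}(t_m)$. The consistency sum is controlled as follows: the terms $\frac{h^{2q+2}}{\tau}\int_{t-\tau}^{t}\norm{\partial_s\phi}{H^{q+1}}^2\,ds$ and the analogous velocity term sum to $C h^{2q+2}\le Ch^{2q}$ using $\phi,\bu\in H^1(0,T;H^{q+1})$; the $\tfrac{\tau}{3}\int_{t-\tau}^{t}\norm{\partial_{ss}\phi}{L^2}^2\,ds$-type and $\tau\int_{t-\tau}^{t}\norm{\nabla\partial_s\phi}{L^2}^2\,ds$ contributions sum to $C\tau^2$; and the pointwise spatial terms $h^{2q}|\mu(t_m)|_{H^{q+1}}^2$, $h^{2q}|\phi(t_m)|_{H^{q+1}}^2$, etc., give Riemann-sum bounds of the form $Ch^{2q}\int_0^T(\cdots)ds\le Ch^{2q}$ under the regularities \eqref{regularities}. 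The total is $C(T)(\tau^2+h^{2q})$.

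To obtain the remaining $\tau\sum_m\norm{\dtau\ehphi(t_m)}{-1,h}^2$ estimate claimed in the theorem, I multiply Lemma~\ref{lem-1,h-error-estimate} by $\tau$ and sum; each term on the right is already controlled in the desired rate, namely $\tau\sum\norm{\nabla\ehmu}{L^2}^2$ and $\tau\sum\norm{\nabla\ehu}{L^2}^2$ by the main estimate just derived, $\tau\sum\norm{\nabla L_\tau\ehphi}{L^2}^2$ by the maximum-in-time bound on $\norm{\nabla\ehphi}{L^2}^2$, and $\tau\sum\mathcal{R}(t_m)$ as above.

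The main obstacle is the Gr\"onwall closure: the right-hand side of Lemma~\ref{lem-error-3} contains $C\norm{\nabla\ehphi}{L^2}^2$ at the \emph{current} time, which after summation produces a term $C\tau\norm{\nabla\ehphi(t_\ell)}{L^2}^2$ that must be absorbed into the telescoped leading term; this absorption forces the smallness condition on $\tau_0$. Everything else is bookkeeping, since the trilinear, nonlinear, and projection estimates needed to derive Lemma~\ref{lem-error-3} have already been dispatched, and the unconditional \emph{a priori} bounds $\norm{\phih^m}{L^\infty}, \norm{\muh^m}{L^2}\le C$ from Lemma~\ref{lem-a-priori-stability} are what made those estimates hold without any coupling between $\tau$ and $h$.
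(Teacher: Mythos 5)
Your proposal is correct and follows essentially the same route as the paper: telescope the estimate of Lemma~\ref{lem-error-3} using the standard $\aiprd{e}{\dtau e}=\frac12\dtau\norm{\nabla e}{L^2}^2+\frac{\tau}{2}\norm{\nabla\dtau e}{L^2}^2$-type identities, absorb the current-time term under a smallness condition $\tau<\tau_0\sim 1/C$, close with the discrete Gr\"onwall inequality, and then recover the $\tau\sum_m\norm{\dtau\ehphi(t_m)}{-1,h}^2$ bound from Lemma~\ref{lem-1,h-error-estimate}. Your accounting of the consistency sum $\tau\sum_m\mathcal{R}(t_m)\le C(\tau^2+h^{2q})$ is the same step the paper invokes, only written out in more detail.
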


	\begin{proof}
Setting $t=t_m$ and using Lemma~\ref{lem-error-3} and the arithmetic-geometric mean inequality, we have
	\begin{align}
\dtau\norm{\nabla\ehphi(t_m)}{L^2}^2 + \dtau\norm{\ehphi(t_m)}{-1,h}^2 + \dtau\norm{\ehu(t_m)}{L^2}^2 &
	\nonumber
	\\
+ \norm{\nabla\ehmu(t_m)}{L^2}^2 +\norm{\ehu(t_m)}{H^1}^2  & \le  \,C\norm{\nabla\ehphi(t_m)}{L^2}^2 + C\norm{\nabla\ehphi(t_{m-1})}{L^2}^2 
	\nonumber
	\\
& \hspace{0.2in} + C\mathcal{R}(t_m).
	\end{align}
Let $1 < \ell \le M$. Applying $\tau\sum_{m=1}^\ell$ and using $\ehphi(t_0) \equiv 0$, $\ehu(t_0) \equiv {\bf 0}$, 
	\begin{align}
\norm{\nabla\ehphi(t_\ell)}{L^2}^2 + \norm{\ehphi(t_\ell)}{-1,h}^2 + \norm{\ehu(t_\ell)}{L^2}^2 &
	\nonumber
	\\
+  \tau \sum_{m=1}^\ell \left[\norm{\nabla\ehmu(t_m)}{L^2}^2 + \norm{\ehu(t_m)}{H^1}^2  \right] &\le  C\tau\sum_{m=1}^\ell \mathcal{R}(t_m) + C_1\tau\sum_{m=1}^\ell\norm{\nabla\ehphi(t_m)}{L^2}^2 .
	\label{estimate-before-gronwall}
	\end{align}
If $0< \tau \le \tau_0:= \frac{1}{2C_1} < \frac{1}{C_1}$, it follows from the last estimate that
	\begin{eqnarray}
\norm{\nabla\ehphi(t_\ell)}{L^2}^2  &\le&  C\tau\sum_{m=1}^\ell \mathcal{R}(t_m)  + \frac{C_1\tau}{1-C_1\tau} \sum_{m=1}^{\ell-1}\norm{\nabla\ehphi
(t_m)}{L^2}^2 
	\nonumber
	\\
&\le& C(\tau^2+h^{2q}) + C\tau \sum_{m=1}^{\ell-1}\norm{\nabla\ehphi(t_m)}{L^2}^2 ,
	\label{pre-gronwall}
	\end{eqnarray}
where we have used the regularity assumptions to conclude $\tau\sum_{m=1}^M \mathcal{R}(t_m)\le C(\tau^2+h^{2q})$. Appealing to the discrete Gronwall inequality \eqref{gronwall-conclusion}, it follows that, for any $1 < \ell \le M$,
	\begin{equation}
\norm{\nabla\ehphi(t_\ell)}{L^2}^2 \le C(T)(\tau^2+h^{2q}).
	\label{post-gronwall}
	\end{equation}
Considering estimates \eqref{estimate-before-gronwall} and \eqref{-1,h-error-estimate} we get the desired result.
	\end{proof}
	
	\begin{rem}
From here it is straightforward to establish an optimal error estimate of the form
	\begin{align}
\max_{1\le m \le M}\left[ \norm{\nabla\ephi(t_m)}{L^2}^2 +\norm{\eu(t_m)}{L^2}^2\right] + \tau\sum_{m=1}^M \left[\norm{\nabla\emu(t_m)}{L^2}^2 + \norm{\nabla\eu(t_m)}{L^2}^2\right]  \le C(T)(\tau^2+h^{2q}) 
	\end{align}
using $\ephi = \eAphi + \ehphi$, \emph{et cetera}, the triangle inequality, and the standard spatial approximations. We omit the details for the sake of brevity.
	\end{rem}

	\section{Numerical Experiments}
	\label{sec:numerincal-experiments}
	
In this section, we provide some numerical experiments to gauge the accuracy and reliability of the fully discrete finite element method developed in the previous sections. We use a square domain $\Omega = (0,1)^2\subset \mathbb{R}^2$ and take ${\mathcal T}_h$ to be a regular triangulation of $\Omega$ consisting of right isosceles triangles. To refine the mesh, we assume that ${\mathcal T}_{\ell}, \ {\ell} = 0, 1, ..., L$, is an hierarchy of nested triangulations of $\Omega$ where ${\mathcal T}_{\ell}$, is obtained by subdividing the triangles of ${\mathcal T}_{\ell -1}$ into four congruent sub-triangles. Note that $h_{\ell -1} = 2h_{\ell}, \ {\ell} = 1, ..., L$ and that $\{{\mathcal T}_{\ell}\}$ is a quasi-uniform family. For the flow problem, we use the inf-sup-stable Taylor-Hood element where the ${\mathcal P}_1$ finite element space is used for the pressure and the $\left[{\mathcal P}_2\right]^2$ finite element space is used for the velocity. (We use a family of meshes ${\mathcal T}_h$ such that no triangle in the mesh has more than one edge on the boundary, as is usually required for the stability of the Taylor-Hood element~\cite{brenner08}.)  We use the ${\mathcal P}_1$ finite element space for the phase field and chemical potential.  In short, we take $q=1$.

We solve the scheme \eqref{scheme-a} -- \eqref{scheme-e} with the following parameters: $\lambda =1$, $\eta =1$, $\theta = 0$, and $\epsilon = 6.25 \times 10^{-2}$. The initial data for the phase field are taken to be
	\begin{equation}
\phi_{h}^0 = \mathcal{I}_h\left\{ \frac{1}{2}\Big(1.0-\cos(4.0\pi x)\Big)\cdot \Big(1.0-\cos(2.0\pi y)\Big)-1.0\right\} ,
	\end{equation}
where $\mathcal{I}_h :   H^2\left(\Omega\right) \to S_h$ is the standard nodal interpolation operator. Recall that our analysis does not specifically cover the use of the operator $\mathcal{I}_h$ in the initialization step.  But, since the error introduced by its use is optimal, a slight modification of the analysis show that this will lead to optimal rates of convergence overall.  (See Remark~\ref{rem:initial-projection}.)   The initial data for the velocity are taken as ${\bf u}_h^0 = {\bf 0}$. Values of the remaining parameters are given in the caption of Table~\ref{tab1}. To solve the system of equations above numerically, we are using the finite element libraries from the FEniCS Project~\cite{fenics12}.  We solve the fully coupled system by a Picard-type iteration.  Namely, at a given time step we fix the velocity and pressure, then solve for $\phi_h$, $\mu_h$, and $\xi_h$.  With these updated, we then solve for the velocity and pressure.  This is repeated until convergence.

	\begin{table}[h!]
	\centering
	\begin{tabular}{ccccccccc}
$h_c$ & $h_f$ & $\norm{\delta_\phi}{H^1}$ & rate & $\norm{\delta_\mu}{H^1}$ & rate & $\norm{\delta_p}{H^1}$ & rate
	\\
	\hline
$\nicefrac{\sqrt{2}}{8}$ & $\nicefrac{\sqrt{2}}{16}$ & $1.988\times 10^0$ & -- & $2.705\times 10^0$ & -- & $3.732\times 10^0$ &  -- &
	\\
$\nicefrac{\sqrt{2}}{16}$ & $\nicefrac{\sqrt{2}}{32}$ & $9.149\times 10^{-1}$ & 1.09 & $1.309\times 10^0$ & 1.03 & $9.73\times 10^{-1}$ & 1.92
	\\
$\nicefrac{\sqrt{2}}{32}$ & $\nicefrac{\sqrt{2}}{64}$ & $4.483\times 10^{-1}$ & 1.02 & $6.216\times 10^{-1}$ & 1.05 & $9.417\times 10^{-1}$ & 1.02
	\\
$\nicefrac{\sqrt{2}}{64}$ & $\nicefrac{\sqrt{2}}{128}$ & $2.231\times 10^{-1}$ & 1.00 & $3.056\times 10^{-1}$ & 1.02 & $4.688\times 10^{-1}$ & 1.00
	\\
	\hline
	\end{tabular}
\caption{$H^1$ Cauchy convergence test. The final time is $T = 4.0\times 10^{-1}$, and the refinement path is taken to be 
$\tau = .001\sqrt{2}h$. The other parameters are $\varepsilon =6.25\times 10^{-2}$; $\Omega = (0,1)^2$.  The Cauchy difference is defined via $\delta_\phi := \phi_{h_f}-\phi_{h_c}$, where the approximations are evaluated at time $t=T$, and analogously for $\delta_\mu$, and $\delta_p$. (See the discussion in the text.) Since $q=1$, \emph{i.e.}, we use ${\mathcal P}_1$ elements for these variables, the norm of the Cauchy difference at $T$ is expected to be $\mathcal{O}(\tau_f)+\mathcal{O}\left(h_f\right) = \mathcal{O}\left(h_f\right)$.}
	\label{tab1}
	\end{table}
	
Note that source terms are not naturally present in the system of equations \eqref{eq:CH-mixed-a-alt} -- \eqref{eq:CH-mixed-f-alt}. Therefore, it is somewhat artificial to add them to the equations in attempt to manufacture exact solutions.  To get around the fact that we do not have possession of exact solutions, we measure error by a different means.  Specifically, we compute the rate at which the Cauchy difference, $\delta_\zeta := \zeta^{M_f}_{h_f} - \zeta^{M_c}_{h_c}$, converges to zero, where $h_f=2h_c$, $\tau_f = 2\tau_c$, and $\tau_fM_f = \tau_cM_c=T$. Then, using a linear refinement path, \emph{i.e.}, $\tau = Ch$, and assuming $q = 1$, we have
	\begin{equation}
\norm{\delta_\zeta}{H^1} = \norm{\zeta^{M_f}_{h_f} - \zeta^{M_c}_{h_c}}{H^1} \le \norm{\zeta^{M_f}_{h_f}-\zeta(T)}{H^1}+ \norm{\zeta^{M_c}_{h_c}-\zeta(T)}{H^1} = \mathcal{O}(h_f^q+\tau_f) = \mathcal{O}(h_f).
	\end{equation}
The results of the $H^1$ Cauchy error analysis are found in Table~\ref{tab1} and confirm first-order convergence in this case. Additionally, we have proved that (at the theoretical level) the energy is non-increasing at each time step.  This is observed in our computations, but, for the sake of brevity, we will suppress an extensive discussion of numerical energy dissipation.

	\appendix
	
	\section{Discrete Gronwall Inequality}
	
We will need the following discrete Gronwall inequality cited in ~\cite{heywood90,layton08}:
	\begin{lem}	
	\label{lem-discrete-gronwall}
Fix $T>0$, and suppose $\left\{a^m\right\}_{m=1}^M$, $\left\{b^m\right\}_{m=1}^M$ and $\left\{c^m\right\}_{m=1}^{M-1}$ are non-negative sequences such that $\tau\sum_{m=1}^{M-1} c^m \le C_1$, where $C_1$ is independent of $\tau$ and $M$, and $M\tau = T$.  Suppose that, for all $\tau>0$, 
	\begin{equation}
a^M + \tau \sum_{m=1}^{M} b^m \le C_2 +\tau \sum_{m=1}^{M-1} a^m c^m ,
	\label{gronwall-assumption}
	\end{equation}
where $C_2>0$ is a constant independent of $\tau$ and $M$.  Then, for all $\tau>0$, 
	\begin{equation}
a^M +\tau \sum_{m=1}^{M} b^m \le C_2 \exp \left(\tau\sum_{m=1}^{M-1}c^m \right) \le C_2\exp(C_1).
	\label{gronwall-conclusion}
	\end{equation}
	\end{lem}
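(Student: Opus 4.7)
The plan is to reduce the claim to the classical discrete Gronwall form and then finish by induction. First I would observe that, since the $b^m$ are non-negative, the auxiliary quantity $S_n := a^n + \tau\sum_{m=1}^{n} b^m$ satisfies $a^m \le S_m$ for every $m$. The hypothesis \eqref{gronwall-assumption}, read in the usual way as holding at each truncation $n\le M$ (the phrase ``for all $\tau>0$'' with $M\tau=T$ covers every partition point, so the bound is effectively available at each intermediate index), then upgrades to
\begin{equation*}
S_n \le C_2 + \tau\sum_{m=1}^{n-1} c^m\, S_m, \qquad 1 \le n \le M.
\end{equation*}

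Next I would prove by induction on $n$ that
\begin{equation*}
S_n \;\le\; C_2 \prod_{m=1}^{n-1}\bigl(1 + \tau c^m\bigr), \qquad 1 \le n \le M.
\end{equation*}
The base case $n=1$ is immediate since the empty sum and empty product are $0$ and $1$ respectively. For the inductive step, substituting the inductive hypothesis into the recursive bound gives
\begin{equation*}
S_n \le C_2\Biggl[1 + \sum_{m=1}^{n-1}(\tau c^m)\prod_{k=1}^{m-1}\bigl(1+\tau c^k\bigr)\Biggr] = C_2\prod_{m=1}^{n-1}\bigl(1+\tau c^m\bigr),
\end{equation*}
where the final equality is the telescoping identity $\prod_{m=1}^{N}(1+x_m) = 1 + \sum_{m=1}^{N} x_m \prod_{k=1}^{m-1}(1+x_k)$, itself a one-line induction.

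To finish, I would apply the elementary inequality $1+x \le e^x$ termwise (valid since $\tau c^m \ge 0$), obtaining
\begin{equation*}
\prod_{m=1}^{M-1}\bigl(1+\tau c^m\bigr) \le \exp\!\left(\tau\sum_{m=1}^{M-1} c^m\right) \le \exp(C_1),
\end{equation*}
and combining this with the inductive bound at $n=M$ yields \eqref{gronwall-conclusion} (noting $a^M+\tau\sum_{m=1}^M b^m = S_M$). There is no substantive obstacle here; the only small subtlety is the quiet reinterpretation of the hypothesis as holding at every intermediate index, after which the argument is entirely mechanical.
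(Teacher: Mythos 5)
The paper does not prove this lemma at all --- it is quoted from the literature (the references following the statement), so there is no in-paper argument to compare against. Your proof is the standard one and is correct: upgrade the recursion to the partial sums $S_n$, establish $S_n \le C_2\prod_{m=1}^{n-1}(1+\tau c^m)$ by (strong) induction via the telescoping identity, and finish with $1+x\le e^x$. The one point that deserves emphasis is the reinterpretation you flag at the end: as literally written, the hypothesis \eqref{gronwall-assumption} is an inequality at the single index $M$, and from that alone the conclusion is false (with $M=2$ one can take $a^1$ enormous, violating nothing at $n=2$ while making $a^2$ as large as one likes). The lemma is only true, and is only ever used, with the recursion available at every truncation $n\le M$ --- and indeed that is how the paper invokes it, e.g.\ in the proof of Theorem~\ref{thm-error-estimate}, where \eqref{pre-gronwall} holds for every $\ell$. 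So your ``quiet reinterpretation'' is not a cosmetic convenience but a genuinely necessary strengthening of the stated hypothesis; it matches the intended meaning in the cited sources and in the paper's applications, and with it your argument is complete.
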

Note that the sum on the right-hand-side of \eqref{gronwall-assumption} must be explicit.

	\section*{Acknowledgments}
SMW acknowledges partial financial support from NSF-DMS 1115390.

	\bibliographystyle{plain}
	\bibliography{MDSCHMixed}

	\end{document}